\documentclass[notitlepage,reqno,11pt]{amsart}
\usepackage{latexsym,amssymb, epsfig, amsmath,amsfonts, subfigure,amsthm}

\usepackage{rotating}
\usepackage[toc,page]{appendix}
\usepackage{color}
\usepackage{multirow}
\usepackage{relsize}
\usepackage{microtype}
\usepackage{dsfont}
\usepackage{cases}
\usepackage{enumitem}

\usepackage[colorlinks, allcolors=blue]{hyperref}

\usepackage[margin=1in]{geometry}

\numberwithin{equation}{section}
\newtheorem{theorem}{Theorem}[section]
\newtheorem{lemma}{Lemma}[section]
\newtheorem{assumption}{Assumption}[section]

\newtheorem{coro}{Corollary}[section]
\newtheorem{prop}{Proposition}[section]
\newtheorem{remark}{Remark}[section]

\newtheorem{example}{Example}[section]

\newlength{\defbaselineskip}
\newcommand{\setlinespacing}[1]%
{\setlength{\baselineskip}{#1 \defbaselineskip}}

\def\E{\mathbb{E}}
\def\P{\mathbb{P}}

				\newcommand{\R}  {\mathbb{R}}
				\newcommand{\N}  {\mathbb{N}}

				\newcommand{\non}{\nonumber}

				\newcommand{\baa}{\begin{eqnarray*}}
					\newcommand{\eaa}{\end{eqnarray*}}
				
				\newcommand{\fF}{\overline{\mathfrak F}}
				\newcommand{\fS}{\overline{\mathfrak S}}

				\newcommand{\ttl}{\Large
					Stochastic epidemic models with pulse vaccination, varying infectivity and waning immunity}
			
			\newcommand{\indic}[1]{\mathds{1}_{#1}}
			\renewcommand{\bar}[1]{\overline{#1}}

\begin{document}
				
				\title[]{\ttl}
				
				%
				%
				%
				%
				\author[Arsene Brice Zotsa Ngoufack]{Arsene Brice Zotsa Ngoufack}
				\address{Vanderbilt University, Nashville, TN, USA}
				\email{bricezotsa@gmail.com}

				\date{\today}
				
					\begin{abstract}
				We introduce a fully stochastic, non‑Markovian SIRS‑type epidemic model that incorporates varying infectivity, waning immunity, and a pulse vaccination strategy that may not confer permanent immunity. The model is constructed at the individual level, where each person is characterized by random infectivity and susceptibility functions, and vaccination campaigns occur at the jump times of a Poisson random measure with arbitrary intensity. We rigorously derive the epidemic dynamics as the large-population limit of an interacting stochastic particle system, leading to a system of nonlinear Volterra-type integral equations governing the average susceptibility and total force of infection. We establish a functional law of large numbers(FLLN) for the empirical processes and provide explicit expressions for the limiting compartmental proportions. The long-term behavior of the system is analyzed: we prove that the infection-free solution is globally asymptotically stable when the basic reproduction number falls below a critical threshold, and that the disease persists when this threshold is exceeded. The threshold is given by the harmonic mean of the maximal susceptibility across individuals and generalizes previous results by incorporating pulse vaccination. Our framework provides a probabilistically grounded extension of classical deterministic pulse vaccination models and offers new insights into the control of epidemics through scheduled immunization policies.
			\end{abstract}

				\subjclass[2020]{60F17; 35Q92; 60K35; 35B40; 92D30}
				
				\keywords{Stochastic epidemic model, age-structured model, age of infection, age of vaccination, varying infectivity, varying immunity/susceptibility; Pulse vaccination, Periodical vaccine, Vaccination campaign, endemicity, local stability, Volterra integral equation, Poisson random measure, Measure-valued process.}
				
				\maketitle
				\allowdisplaybreaks
				
				\section*{Introduction}

				Vaccination remains one of the most effective strategies for controlling infectious diseases. Traditional epidemic models often assume that vaccination is deployed reactively, in response to case detection or outbreak dynamics \cite{britton2019stochastic}. However, in real-world settings, particularly under resource constraints or public health protocols, vaccination is frequently administered according to predetermined schedules, age groups, or mass campaigns that operate independently of the current epidemic state \cite{hadjipanayis2019compliance, who2016grisp}.
				
				To reflect such practices, researchers have developed epidemic models incorporating periodic vaccination. In \cite{shulgin1998pulse, stone2000theoretical}, the authors studied an SIR model with pulse vaccination, showing that repeatedly vaccinating the susceptible population at discrete time intervals can lead to disease eradication. They formulated the model as a system of ordinary differential equations (ODEs) and analyzed its long-term behavior. In particular, they identified the existence of an infection-free solution, defined as a solution in which the number of infected individuals remains identically zero over time. Under a suitable epidemic threshold condition, they proved that this infection-free solution is asymptotically stable. Furthermore, they demonstrated that the epidemic dynamics converge asymptotically toward this solution. In such cases, the infection-free solution is said to be globally attractive in the literature.
				
				This framework was extended in \cite{meng2010delay, gao2006analysis, gao2007analysis}, where the authors introduced time delays to account for incubation periods and distributed infection dynamics. In particular, \cite{gao2007analysis} proposed an SIR model with pulse vaccination and distributed time delay, showing that the infection-free solution remains globally attractive when the vaccination rate exceeds a critical threshold. Conversely, they established that the disease becomes uniformly persistent when the vaccination rate falls below this threshold. In \cite{gao2006analysis}, the same authors developed an SEIRS model with time delay and pulse vaccination, further enriching the deterministic modeling framework.
				
				Additional deterministic models with pulse vaccination and delay effects were proposed in \cite{jin2008pulse, nagy2011epidemic, liu2017pulse}. However, these models remain deterministic and do not capture the inherent randomness of epidemic spread. A notable exception is \cite{wang2014pulse}, where the authors introduced stochasticity by adding Gaussian white noise to the classical ODE-based SIR model with pulse vaccination. They analyzed the resulting stochastic differential equation and studied the stability and global attractiveness of the infection-free solution. 
				
				Despite these advances, to the best of our knowledge, a stochastic epidemic model that rigorously incorporates a flexible pulse vaccination strategy has not yet been developed. Most existing models with pulse vaccination remain deterministic, while the literature on more advanced stochastic epidemic models, particularly those without pulse vaccination has evolved significantly. For instance, in \cite{forien-Zotsa2022stochastic,forien2026stochastic}, the authors introduced a stochastic SIRS-type model with varying infectivity and waning immunity. In this framework, recovered individuals gradually become susceptible again, and each individual is characterized by random infectivity and susceptibility functions, drawn independently at each infection event. The epidemic dynamics are governed by a system of nonlinear Volterra-type integral equations describing the evolution of average susceptibility and the total force of infection. The authors analyzed the long-term behavior of the system and showed that the epidemic threshold for disease extinction depends on the population’s immunity profile, thereby generalizing classical threshold results. The model is non-Markovian and captures memory effects in the infection process. The fluctuations of this model were studied in \cite{ngoufack2025functional}. A particular case focusing on vaccination policies was considered in \cite{foutel2025optimal}, where individuals are vaccinated after a random delay following recovery. This model was further extended in \cite{guerin2025stochastic} by incorporating memory of past infections, and its fluctuations were characterized in \cite{CLT_zotsa2025stochastic}. For foundational literature on classical stochastic epidemic models, we refer to \cite{britton2019stochastic}, and for non-Markovian models, see \cite{pang2022functional, forien_epidemic_2021}.
				
				\medskip
				In this paper, we introduce a novel stochastic epidemic model that incorporates varying infectivity, waning natural immunity, and a vaccination strategy independent of the epidemic state. This framework is designed to reflect practical interventions such as routine immunization programs and mass vaccination campaigns. 
				Following the methodology in \cite{forien-Zotsa2022stochastic}, we assume that each individual is characterized by random infectivity and susceptibility functions, depending on their age of infection or vaccination and drawn independently at each new infection or vaccination event. Unlike the existing pulse‑vaccination literature, which generally assumes that immunization confers permanent and complete protection, our model incorporates a more realistic scenario in which vaccination may or may not provide permanent immunity. Thus, at each vaccination event, we draw a new random function, now depending on the time elapsed since vaccination to represent the individual current susceptibility, while their infectivity becomes zero. 
				
				The vaccination policy is modeled as a point process, with campaign times corresponding to the jump times of a Poisson random measure. The intensity of this measure is given by an arbitrary positive sigma-finite measure, allowing for flexible specification of both periodic and irregular vaccination schedules. When the intensity measure is identically zero, our model reduces to the one studied in \cite{forien-Zotsa2022stochastic}. The main distinction between our stochastic model and that of \cite{forien-Zotsa2022stochastic} lies in the definitions of current infectivity and current susceptibility. In our framework, these functions depend on the age of infection, the age of vaccination, and the infection age at the time of vaccination. This contrasts with \cite{forien-Zotsa2022stochastic}, where these functions depend solely on the age of infection. More specifically, we use the infection age at the time of vaccination defined as the time elapsed between the infection event and the vaccination campaign to determine an individual’s eligibility for vaccination. A transition to the vaccinated state occurs only if this duration exceeds the total length of the current infectious period.
				
				\medskip
				The epidemic dynamics are governed by a system of nonlinear integral equations describing the evolution of the average susceptibility and the total force of infection. From this system, we derive explicit representations for the limiting proportions of individuals in each epidemiological compartment (Theorem~\ref{Th:FLLN}). In Proposition~\ref{prop-FLLN}, we present a specific case of this theorem corresponding to a compartmental non-Markovian SIRS model. Furthermore, in Example~\ref{Ex-Model-SIRS}, we retrieve the classical ODE model by assuming that all durations follow exponential distributions. This example is of particular interest as it constructs a hybrid model incorporating both continuous and discrete vaccination; such a hybrid approach was emphasized in \cite{bolatova2024mathematical}.
				We then analyze the long-term behavior of the system and characterize the conditions for disease extinction or persistence. Specifically, we demonstrate that when the basic reproduction number $R_0$ is below unity, the disease-free equilibrium is globally asymptotically stable. In the specific case where vaccination confers permanent immunity, we show that the epidemic threshold is determined by the harmonic mean of the maximal susceptibility across individuals (Theorem~\ref{Th-LT-free}), coinciding with the threshold identified in~\cite{forien-Zotsa2022stochastic}. Consequently, the disease-free solution is globally attractive, and the epidemic becomes extinct (Corollary~\ref{coro-endemic-Th-d-free}). Conversely, when this threshold is exceeded, the disease persists in the population( Theorem~\ref{end-th-persistence}). In particular, for the compartmental non-Markovian SIRS model introduced in Proposition~\ref{prop-FLLN}, we establish that the disease-free steady state is asymptotically stable if $R_0<1$, whereas the disease persists if $R_0>1.$

				\medskip
				We observe that the system of Volterra integral equations obtained via our FLLN depends explicitly on both the age of infection and the age of vaccination. Under the assumption that infected individuals cannot be reinfected during their infectious period, the average total force of infection in our model coincides with that of the model in~\cite{forien-Zotsa2022stochastic}. However, the average susceptibility differs significantly, reflecting the structural influence of the vaccination policy. In the general case, where the assumption preventing reinfection during the infectious period is removed, both the total force of infection and the average susceptibility diverge from those in~\cite{forien-Zotsa2022stochastic}. These discrepancies arise from additional exponential decay terms that capture the cumulative impact of the vaccination pulses. Specifically, the vaccination intensity dictates the rate at which individuals transition to the immunized state, thereby reducing both individual susceptibility and the overall transmission potential within the population.
				
				\medskip
				The core of the proof relies on a coupling argument between the process counting the number of infections in the population and a family of i.i.d. counting processes in which the renormalized force of infection is replaced by its mean. Although certain parts of the argument build on techniques developed in \cite{forien-Zotsa2022stochastic}, the analysis is considerably more delicate here due to the introduction of a vaccination policy modeled by a Poisson random measure. This is not a straightforward extension: both the total force of infection and the average susceptibility must be decomposed into two distinct contributions. The first corresponds to individuals who will not be vaccinated at the time of vaccination, and the second to those who will be vaccinated.
				A major challenge of this work lies in obtaining the explicit expression of the limit for our stochastic model (see Section~\ref{sec-exp-F-G}). To compute the expectations of the two contributing terms, it is necessary to handle carefully the indicator functions involving the infection age at the time of vaccination. These indicators determine the appropriate reinfection rate, which depends on whether the individual’s current susceptibility is governed by natural immunity or by vaccine‑induced immunity.
				It was particularly difficult to compute the first‑order moment of the average susceptibility for vaccinated individuals. A direct computation would require determining the exponential moments of a Poisson stochastic integral with a non‑predictable integrand, which is a notoriously complex task. To overcome this difficulty, we establish two theorems in Section~\ref{sec-poiss-integral} that provide formulas for the moment exponents of Poisson stochastic integrals. Compared with the results of \cite{breton2014factorial, decreusefond2014moment, privault2012moments}, these formulas are significantly more tractable. However, we ultimately adopted an alternative approach: computing the probability that an individual is still infected at the time of vaccination. This method allows us to avoid the difficulties associated with Poisson integrals with non‑predictable integrands.
				The existence of a solution to the resulting system of integral equations is established via the Leray–Schauder fixed point theorem. Proving the existence of a solution is non‑trivial, as we seek a càdlàg (right‑continuous with left limits) solution, which is uncommon in the existing literature for this type of problem. To apply the Leray–Schauder theorem, one must carefully examine the topology of the functional space under consideration. Since the theorem requires a Banach space, we cannot work directly in the space of càdlàg functions equipped with the Skorokhod topology. Instead, we first establish the existence of a solution in $L^\infty$ (Space of essential bounded functions), which is a Banach space, and then conclude that the solution is indeed càdlàg based on its explicit integral representation.
				Furthermore, a central requirement of the Leray–Schauder theorem is compactness. Establishing compactness in $L^\infty$, is far from standard; nevertheless, we satisfy this requirement by relying on Cherkas’s $1970$ characterization (see \cite{cherkas1970compactness}). 
				The remainder of the proof follows the methodology established in \cite{forien-Zotsa2022stochastic}.

				\medskip
				This work opens the door to the development of more effective computational models and new theoretical approaches to support epidemic control through vaccination campaigns. In particular, we can leverage this idea to extend the model developed in \cite{baghdadi2026stochastic} to propose a periodic vaccination plan to eradicate malaria.
				
				\subsection*{Organization of the paper} The remainder of the paper is organized as follows. In Section~\ref{sec-model}, we present a detailed description of the model, state the main assumptions and we present the statement of the functional law of large numbers (FLLN), along with a discussion of how our results relate to existing models. The analysis of the endemic equilibrium is carried out in Section~\ref{sec-endemic-eq}. The proofs of the FLLN are provided in Section~\ref{proof-FLLN}, while those concerning the endemic equilibrium are given in Section~\ref{sec-LT}.
				
				\subsection*{Notation}
				Throughout the paper, all the random variables and processes are defined on a common complete probability space $(\Omega, \mathcal{F},\P)$.  
				We use $\xrightarrow[N\to+\infty]{\mathbb{P}}$ to denote convergence in probability as the parameter $N\to \infty$.
				Let $\N$ denote the set of natural numbers and $\R^k (\R^k_+)$ the space of $k$-dimensional vectors with real (nonnegative) numbers, with $\R(\R_+)$ for $k=1$.  We use $\indic{\{\cdot\}}$ for the indicator function. Let $D=D(\R_+; \R)$ be the space of $\R$-valued c{\`a}dl{\`a}g functions defined on $\R_+$, with convergence in $D$ meaning convergence in the Skorohod $J_1$ topology (see, e.g., \cite[Chapter 3]{billingsley1999convergence}).  Also, we use $D^k$ to denote the $k$-fold product with the product $J_1$ topology. Let $C$ be the subset of $D$ consisting of continuous functions and $D_+$ the subset of $D$ of c{\`a}dl{\`a}g functions with values on $\R_+$

				\bigskip 
				\section{Model Description}\label{sec-model}
				\subsection{Definition of the model}\label{sec-model-descr}
				We consider a population of fixed size $N$. 
				Let $(\lambda_0^I,\gamma_0^I)$ and $(\lambda^I,\gamma^I)$ be two random variables taking values in $D(\R_+, \R_+)^2$, and $\gamma^V$ a random variable taking values in $D(\R_+,\R_+)$. Let $ \lbrace (\lambda_{k,0}^I, \gamma_{k,0}^I), 1 \leq k \leq N \rbrace $ be a family of i.i.d. copies of $ (\lambda_0^I, \gamma_0^I) $ and $ \lbrace (\lambda_{k,i}^I, \gamma_{k,i}^I), i \geq 1, 1 \leq k \leq N \rbrace $ be a family of i.i.d. copies of $ (\lambda^I, \gamma^I) $, independent of the previous family. Let $\lbrace (\gamma^V_{k,j}),\,1\leq k\leq N, j\geq0\rbrace$ be a family of copies of $\gamma^V$, independent of the previous two families.
				The function $\lambda_{k,0}^I$ (resp. $\gamma_{k,0}^I$) is the  infectivity (resp. susceptibility) of the $ k $-th individual between time $0$ and the time of their first (re)-infection, and $\lambda_{k,i}^I(t)$ denotes the infectivity of the $k$-th individual, at time $t$ after their $i$-th infection (where we count here only the infections after time $0$), $\gamma_{k,i}^I(t)$ denotes the susceptibility of the $k$-th individual, at time $t$ after their $i$-th infection, given that this is their most recent infection, and $\gamma^V_{k,j}(t)$ denotes the susceptibility of individual $k$, at time $t$ after their $j$-th vaccination.
				
				We can think of the law of $(\lambda_0^I,\gamma_0^I)$ as being a mixture of the law for the initially infected individuals, who have been infected before time $0$ and for which $\lambda_0^I(0)\ge0$ and $\gamma_0^I(0)=0$, and the law for the initially susceptible individuals, for which $\lambda_0^I(0)=0$ and $\gamma_0^I(0)>0$ (possibly $\gamma_0^I(0)=1$).
				
				We will assume that $\gamma^V_{k,0}=0.$
				
				We assume that $(\lambda_0^I,\gamma_0^I)$, $(\lambda^I,\gamma^I)$ and $\gamma^V$ satisfy the following assumption.
				\begin{assumption}\label{AS-lambda}
					We assume that:
					\begin{enumerate}
						\item $ 0 \leq \gamma_0^I(t)\leq1,\,0\leq\gamma^I(t) \leq 1$ and $0\leq\gamma^V(t) \leq 1 $  almost surely. 
						\item There exists a deterministic constant $ \lambda_* < \infty $ such that for all $ t \geq 0,\, 0\leq\lambda_0^I(t)\leq\lambda_*,\,$ $0\leq\lambda^I(t) \leq \lambda_* $ almost surely.
					\end{enumerate}
				\end{assumption}
				
				For $ i \geq 0 $ and $ 1 \leq k \leq N $, we define the duration of infection: 
				\begin{align*}
					\eta_{k,0} := \sup \lbrace t \geq 0 : \lambda_{k,0}^I(t) > 0 \rbrace, \quad \text{ and }\quad
					\eta_{k,i} := \sup \lbrace t \geq 0 : \lambda_{k,i}^I(t) > 0 \rbrace.
				\end{align*}
				We will also use the notations
				\begin{equation} \label{def:eta}
					\eta_0=\sup\{t>0,\ \lambda_0^I(t)>0\} \quad \text{ and } \quad \eta=\sup\{t>0,\ \lambda^I(t)>0\}.
				\end{equation}
				\begin{figure}[htb]
					\centering
					\includegraphics[width=\textwidth]{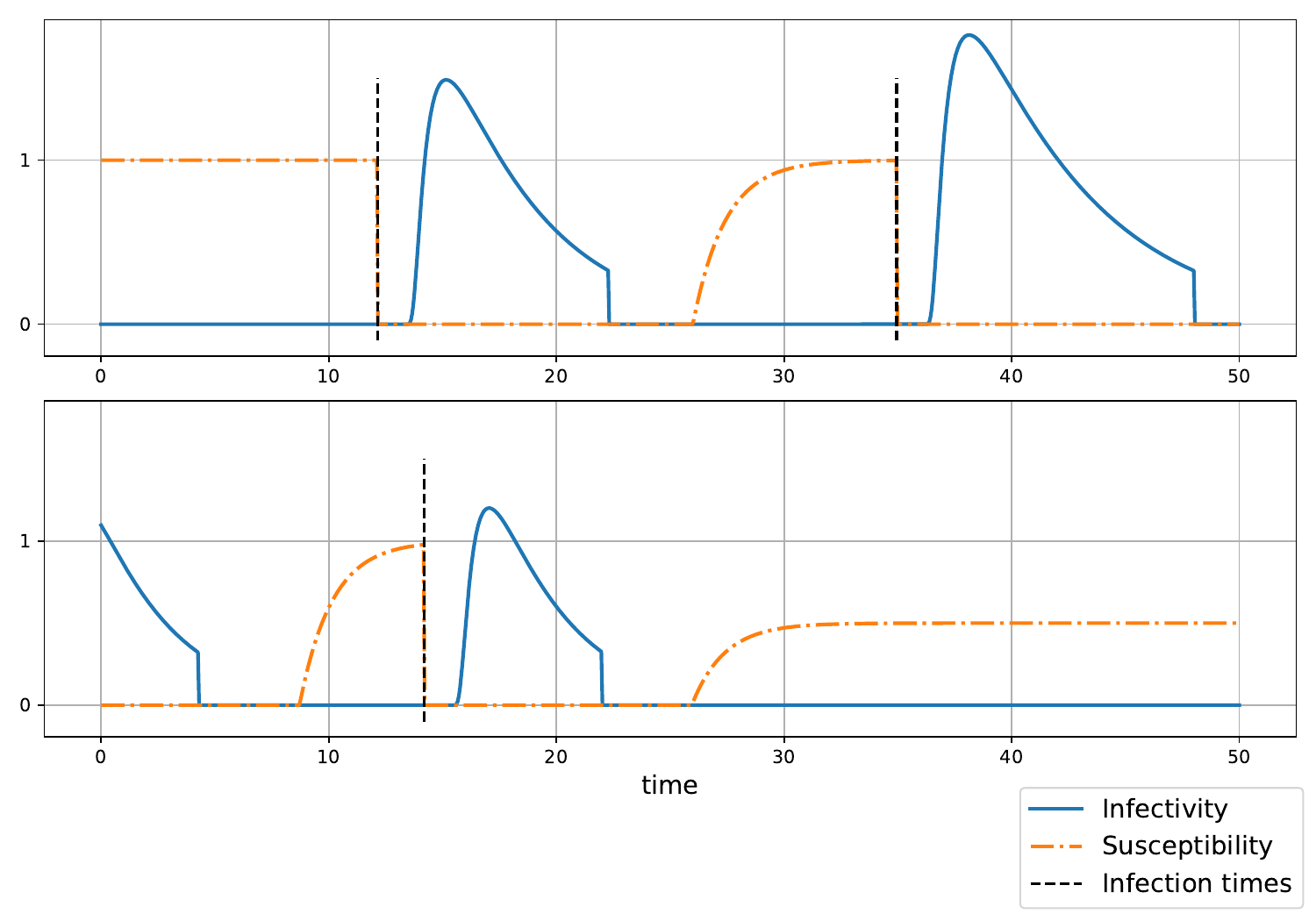}
					\caption{Illustration of the evolution of an individual's infectivity and natural susceptibility through time. Each graphic shows the dynamics of an individual's infectivity (blue) and susceptibility (orange). The top graphic corresponds to an individual which is initially susceptible, and the bottom one to an initially infectious individual. Note that, after being reinfected, the second individual remains partially immune even a long time after infection.} \label{fig:infection_cycles}
				\end{figure}
				
				\bigskip
				Let $Q^V$ be a Poisson random process on $\R_+$ with intensity a sigma-finite measure $\nu$ on $\R_+$ independent of $(\lambda_0^I,\gamma_0^I)$, $(\lambda^I,\gamma^I)$ and $\gamma^V$. Then for $t\geq0,$ we introduce $\mathcal{N}_t=Q^V((0,t])$ the Poisson process that describes the number of vaccination campaign that takes place in the time interval $(0,t]$. 
				
				Let  now $ (Q_k^V, 1 \leq k \leq N) $ be a family of i.i.d copies of $Q^V$ independent of $ \lbrace (\lambda_{k,i}^I, \gamma_{k,i}^I), i \geq 0, 1 \leq k \leq N \rbrace $ and $(\gamma^V_{k,j})_{k,j}$. We then introduce $\mathcal{N}_t^k=Q^V_k((0,t])$ and denote by $(T^V_{k,i})_i$ the associated jump times, which correspond to the vaccination times campaign of individual $k$.
				
				Let $A^N_k(t)$ be the number of times that the individual $k$ has been (re)--infected on the time interval $(0,t]$ and $(\tau^N_{k,i})_i$ the associated jump times. Literally for $i\geq1,\,\tau^N_{k,i}$ corresponds to the time of the $i$-th infection of individual $k$. 
				
				At any time $t$, the state of individual $k$ is determined by the chronology of the most recent infection $\tau_{k,A^N_k(t)}^N$ and the most recent vaccination $T^V_{k,\mathcal{N}_t^k}$. We distinguish two situations:
				\begin{enumerate}
					\item If $T^V_{k,\mathcal{N}_t^k}-\tau_{k,A^N_k(t)}^N \leq \eta_{k,A^N_k(t)}$, individual $k$ is infected at the time of vaccination. In this case, the individual is not vaccinated, and their infectivity $\lambda^I$ and susceptibility $\gamma^I$ remain unchanged.
					\item If $T^V_{k,\mathcal{N}_t^k} - \tau_{k,A^N_k(t)}^N > \eta_{k,A^N_k(t)}$,  individual $k$ is not infected at the time of vaccination. In this case, the individual is vaccinated, a new random function  $\gamma^V$ is drawn to determine their current susceptibility, and their infectivity becomes zero. 
				\end{enumerate}
				
				Consequently, the current infectivity is given by:
				\begin{equation}
					\begin{aligned}
						\lambda_{k,A^N_k(t),\mathcal{N}^k(t)}(t) &:= \lambda_{k,A^N_k(t)}^I(t-\tau_{k,A^N_k(t)}^N) \mathds{1}_{\{ T^V_{k,\mathcal{N}_t^k} - \tau_{k,A^N_k(t)}^N \leq \eta_{k,A^N_k(t)} \}},
					\end{aligned}
				\end{equation}
				and the current susceptibility is given by
				\begin{equation}
					\begin{aligned}
						\gamma_{k,A^N_k(t),\mathcal{N}^k(t)}(t) &:= \gamma_{k,A^N_k(t)}^I(t-\tau_{k,A^N_k(t)}^N) \mathds{1}_{\{ T^V_{k,\mathcal{N}_t^k} - \tau_{k,A^N_k(t)}^N \leq \eta_{k,A^N_k(t)} \}} \\
						&\hspace{2cm}+ \gamma_{k,\mathcal{N}_t^k}^V(t-T^V_{k,\mathcal{N}_t^k}) \mathds{1}_{\{ T^V_{k,\mathcal{N}_t^k} - \tau_{k,A^N_k(t)}^N > \eta_{k,A^N_k(t)} \}}.
					\end{aligned}
				\end{equation}
				
				Let us now define $\overline{\mathfrak F}^N(t)$ and $ \overline{\mathfrak S}^N(t) $ as the average infectivity and susceptibility in the population, i.e.,
				
				\begin{align*}
					\overline{\mathfrak{F}}^N(t) := \frac{1}{N} \sum_{k=1}^N \lambda_{k,A^N_k(t),\mathcal{N}^k(t)}(t), && \overline{\mathfrak{S}}^N(t):= \frac{1}{N} \sum_{k=1}^N \gamma_{k,A^N_k(t),\mathcal{N}^k(t)}(t). 
				\end{align*}
				The instantaneous rate at which the $ \ell $-th individual infects the $k$-th individual is
				\begin{align*}
					\frac{1}{N} \lambda_{\ell,A^N_\ell(t),\mathcal{N}^\ell(t)}(t) \gamma_{k,A^N_k(t),\mathcal{N}^k(t)}(t) 
				\end{align*}
				where the $ \frac{1}{N} $ factor comes from the probability that the $ k $-th individual is chosen as the target of the infectious contact.
				Summing over the index $\ell$, the instantaneous rate at which the $ k $-th individual is (re)infected is given by
				\begin{align} \label{def_Upsilon}
					\Upsilon_k^N(t) := \gamma_{k,A^N_k(t),\mathcal{N}^k(t)}(t)\, \overline{\mathfrak{F}}^N(t).
				\end{align}
				\bigskip
				
				Let  now $ (Q_k, 1 \leq k \leq N) $ be an i.i.d. family of standard Poisson random measures (PRMs) on $ \R_+^2 $,  independent of the  sequence $(\lambda_{k,i}^I,\gamma_{k,i}^I)_{1\le k\le N, i\ge0}$, $(\gamma^V_{k,j})_{1\leq k\leq N, j\geq1}$ and $(Q_k^V, 1 \leq k \leq N) $.
				The family of counting processes $(A^N_k(t), t \geq 0, 1 \leq k \leq N)$ is then defined as the solution of
				\begin{align} \label{def_A_N_k}
					A^N_k(t) = \int_{[0,t] \times \R_+} \indic{u\leq\Upsilon^N_k(r^-)}Q_k(dr,du)\,.
				\end{align}

				Note that we construct $A^N_k$ by induction on the jumps times. Assumption \ref{AS-lambda}  implies that the rate $\Upsilon_k^N(t)$ is bounded almost surely
				by the constant $\lambda_\ast$. Consequently the jump times do not accumulate, and the above induction defines $A^N_k(t)$ for all $t\geq0$.
				
				\subsection{Compartmental model}Let $\varphi^I_0\in D_+$, $\varphi^I\in D_+$ and $\varphi\in D_+$ a random functions and let $(\varphi_{k,0}^I)_k$ be a family of copies of $\varphi_0^I$, $((\varphi_{k,i}^I))_k$ be a family of copies of $\varphi^I$ and $((\varphi_{k,i}^V))_k$ be a family of copies of $\varphi^V$ independent of the family $(Q_k,Q^V_k)_k$. We assume that $\varphi^I_0$, $\varphi^I$ and $\varphi^V$ are bounded by a deterministic constant.
				
				We set 
				\begin{equation}
					\begin{aligned}
						\varphi_{k,A^N_k(t),\mathcal{N}^k(t)}(t)&:=\varphi_{k,A^N_k(t)}^I(t-\tau_{k,A^N_k(t)}^N)\indic{ T^V_{k,\mathcal{N}_t^k}-\tau_{k,A^N_k(t)}^N\leq\eta_{k,A^N_k(t)}}\\
						&\hspace{2cm}+\varphi_{k,A^N_k(t)}^V(t-T^V_{k,\mathcal{N}_t^k})\indic{T^V_{k,\mathcal{N}_t^k}-\tau_{k,A^N_k(t)}^N>\eta_{k,A^N_k(t)}},
					\end{aligned}
				\end{equation}
				and we define for $t\geq0$,
				\[\mathfrak{U}^N_t(\varphi)=\frac{1}{N}\sum_{k=1}^{N}\varphi_{k,A^N_k(t),\mathcal{N}^k(t)}(t).\]
				In particular, 
				\begin{itemize}
					\item For $\varphi_0^I(t)=\indic{\eta_0\geq t},\,\varphi^I(t)=\indic{\eta\geq t},$ and $\varphi^V(t)=0,$
					$\overline{I}^N(t):=\mathfrak{U}^N_t(\varphi)$ will be the proportion of infected individuals at time $t$.
					\item If we denote by $\varsigma_0$ and $\varsigma$ the random variable which represent  the duration  where individuals are full immunized, for $\varphi_0^I(t)=\indic{\eta_0< t\leq \eta_0+\varsigma_0}$, $\varphi^I(t)=\indic{\eta< t\leq\eta+\varsigma},$ and $\varphi^V(t)=0,$
					$\overline{R}^N(t):=\mathfrak{U}^N_t(\varphi)$ will be the proportion of recovered individuals at time $t$.
				\end{itemize}
				%
				We also refer to \cite{forien-Zotsa2022stochastic,ngoufack2025functional} for more examples of the function $\varphi$.				
				\subsection{Functional law of large numbers}
				In this Section we will present the convergence of the process $\fS^N,\,\fF^N$ and $\mathfrak{U}^N(\varphi)$.
				
				We define by $\mu^V$ the law of $\gamma^V$ and $\mu^I$ the law of the pair $(\gamma^I,\lambda^I)$.
				
				For $(x,y)\in D^2_+,$ and $v\in\R_+,$ we set,
				\begin{multline}\label{eq-Psi-0}
					\begin{aligned}
						\mathcal{K}(x,y)(v)&=1-\E\Big[\indic{ v\leq\eta_{0}}\exp\left(-\int_{0}^{v}\gamma_{0}^I(r)y(r)dr\right)\Big]\\
						&\hspace{1cm}-\int_{0}^{v}\E\Big[\indic{v-s\leq\eta}\exp\left(-\int_{s}^{v}\gamma^I(r-s)y(r)dr\right)\Big]x(s)y(s)ds
					\end{aligned}
				\end{multline}
				
				Note that, $\mathcal{K}(x,y)(v)$ is the probability that individual is successful vaccinated at time $v$.
				
				We now consider the following system of integral equations, for which we look a solution $(x,y)\in D^2_+$:
				\begin{equation}\label{eq:F-G-x}
					\begin{aligned}
						x(t)&=\E\left[\gamma_0^I(t)\indic{t\leq\eta_{0}}\exp\left(-\int_{0}^{t}\gamma_{0}^I(r)y(r)dr\right)\right]\\
						&\hspace{2cm}+\E\left[\gamma_0^I(t)e^{-\nu((\eta_{0},t])}\indic{t>\eta_{0}}\exp\left(-\int_{0}^{t}\gamma_{0}^I(r)y(r)dr\right)\right]\\
						&\hspace{1cm}+\int_{0}^{t}\E\left[\gamma^I(t-s)\indic{t-s\leq\eta}\exp\left(-\int_{s}^{t}\gamma^I(r-s)y(r)dr\right)\right]x(s)y(s)ds\\
						&\hspace{1cm}+\int_{0}^{t}\E\left[\gamma^I(t-s)e^{-\nu((s+\eta,t])}\indic{t-s>\eta}\exp\left(-\int_{s}^{t}\gamma^I(r-s)y(r)dr\right)\right]x(s)y(s)ds\\	&\hspace{1cm}+\int_0^t \mathcal{K}(x,y)(v)e^{-\nu((v,t])}\E\left[\gamma^V(t-v)\exp\left(-\int_{v}^{t}\gamma^V(r-v)y(r)dr\right)\right]\nu(dv)
					\end{aligned}
				\end{equation}
				and 
				\begin{equation}\label{eq:F-G-y}
					\begin{aligned}
						y(t)&=\E\left[\lambda_0^I(t)\indic{t\leq\eta_{0}}\exp\left(-\int_{0}^{t}\gamma_{0}^I(r)y(r)dr\right)\right]\\
						&\hspace{2cm}+\E\left[\lambda_0^I(t)e^{-\nu((\eta_{0},t])}\indic{t>\eta_{0}}\exp\left(-\int_{0}^{t}\gamma_{0}^I(r)y(r)dr\right)\right]\\
						&\hspace{1cm}+\int_{0}^{t}\E\left[\lambda^I(t-s)\indic{t-s\leq\eta}\exp\left(-\int_{s}^{t}\gamma^I(r-s)y(r)dr\right)\right]x(s)y(s)ds\\
						&\hspace{1cm}+\int_{0}^{t}\E\left[\lambda^I(t-s)e^{-\nu((s+\eta,t])}\indic{t-s>\eta}\exp\left(-\int_{s}^{t}\gamma^I(r-s)y(r)dr\right)\right]x(s)y(s)ds.							
					\end{aligned}
				\end{equation}
				We first establish the conservation of the mass. The proof is in Section~\ref{sec:mass-conserv}.
				\begin{prop}\label{Lem:mass-conserv}Under Assumption~\ref{AS-lambda}, if $(x,y)\in D^2_+$ is a solution of the system of equations~\eqref{eq:F-G-x}-\eqref{eq:F-G-y}, then,
					\begin{multline}
						\begin{aligned}
							&\E\left[\indic{t\leq\eta_{0}}\exp\left(-\int_{0}^{t}\gamma_{0}^I(r)y(r)dr\right)\right]+\E\left[e^{-\nu((\eta_0,t])}\indic{t>\eta_{0}}\exp\left(-\int_{0}^{t}\gamma_{0}^I(r)y(r)dr\right)\right]\\
							&\hspace{1cm}+\int_{0}^{t}\E\left[\indic{t-s\leq\eta}\exp\left(-\int_{s}^{t}\gamma^I(r-s)y(r)dr\right)\right]x(s)y(s)ds\\
							&\hspace{1cm}+\int_{0}^{t}\E\left[e^{-\nu((s+\eta,t])}\indic{t-s\geq\eta}\exp\left(-\int_{s}^{t}\gamma^I(r-s)y(r)dr\right)\right]x(s)y(s)ds\\
							&\hspace{1cm}+\int_{0}^{t}\E\left[e^{-\nu((\eta_0,a])}\indic{a\geq\eta_{0}}\exp\left(-\int_{0}^{a}\gamma_{0}^I(r)y(r)dr\right)\right]\nu(da)\\
							&\hspace{1cm}+\int_{0}^{t}\int_{s}^{t}\E\left[e^{-\nu((s+\eta,a])}\indic{a-s\geq\eta}\exp\left(-\int_{s}^{a}\gamma^I(r-s)y(r)dr\right)\right]\nu(da)x(s)y(s)ds\\
							&\hspace{1cm}+\int_{0}^{t}e^{-\nu((v,t])}\mathcal{K}(x,y)(v)\E\left[\exp\left(-\int_{v}^{t}\gamma^V(r-v)y(r)dr\right)\right]\nu(dv)\\
							&\hspace{1cm}+\int_{0}^{t}\mathcal{K}(x,y)(v)\int_{v}^{t}e^{-\nu((v,a])}\E\Bigg[\exp\left(-\int_{v}^{a}\gamma^V(r-v)y(r)dr\right)\Bigg]\nu(da)\nu(dv)\\
							&=1+\int_{0}^{t}\mathcal{K}(x,y)(v)\nu(dv).
						\end{aligned}
					\end{multline}
				\end{prop}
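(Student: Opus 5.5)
The plan is to read the left-hand side as a sum of ``cohort masses'' plus the cumulative outflow of each cohort through vaccination, and to prove the identity by a balance (Fubini plus fundamental theorem of calculus) argument.

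There are three types of cohorts:
\begin{itemize}
\item the initial cohort, giving terms 1, 2 and 5;
\item the cohort infected at time $s$, of density $x(s)y(s)\,ds$, giving terms 3, 4 and 6;
\item the cohort vaccinated at time $v$, of density $\mathcal{K}(x,y)(v)\,\nu(dv)$, giving terms 7 and 8.
\end{itemize}
Within each cohort, mass is lost either by reinfection, at rate $\gamma(\cdot)y(t)$, or by a subsequent vaccination. The latter is possible only after the infectious period ends, which is what the factor $e^{-\nu((\cdot,t])}$ encodes. I will show three things. First, for each cohort, current mass plus cumulative vaccination loss plus cumulative reinfection loss equals the initial cohort mass. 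Second, the total reinfection loss summed over all cohorts is exactly $\int_0^t x(s)y(s)\,ds$, which cancels the mass injected into the infected cohorts. Third, the mass injected into the vaccinated cohorts is $\int_0^t\mathcal{K}(x,y)(v)\nu(dv)$, which is the right-hand side minus $1$.

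\textbf{Step 1: a pathwise identity for each cohort.} Fix the random functions, say $(\gamma_0^I,\eta_0)$ for the initial cohort, and set $E(u,t)=\exp(-\int_u^t\gamma(r-u)y(r)dr)$. This is absolutely continuous in $t$, with $\partial_t E=-\gamma(t-u)y(t)E$. The factor $G(t)=e^{-\nu((\eta_0,t])}$ is right-continuous and of bounded variation. I will write the product $\indic{t\le\eta_0}E+\indic{t>\eta_0}GE$ as $1$ plus a Lebesgue--Stieltjes integral over $(0,t]$. The $dt$-part of that integral is $-\gamma y\,(\cdot)$, the reinfection loss. The $\nu$-part is minus the vaccination loss $\int_{(\eta_0,t]}G(a)E(0,a)\nu(da)$; taking expectations of this $\nu$-part yields term 5. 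The same computation with $(\gamma^I,\eta)$ started at $s$ yields terms 3, 4 and 6, and with $\gamma^V$ started at $v$ (with no infectious period, so $G=e^{-\nu((v,t])}$) it yields terms 7 and 8. Integrating these identities against $x(s)y(s)ds$ and $\mathcal{K}(x,y)(v)\nu(dv)$ and exchanging the order of integration by Fubini is justified, since all integrands lie in $[0,1]$ and $x,y$ are bounded on compacts, being c\`adl\`ag.

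\textbf{Step 2: the reinfection losses cancel.} After summing, the total reinfection loss up to time $t$ is
\[\int_0^t y(r)\Big(\E[\gamma_0^I(r)\cdots]+\int_0^r\E[\gamma^I(r-s)\cdots]x(s)y(s)ds+\int_0^r\mathcal{K}(x,y)(v)e^{-\nu((v,r])}\E[\gamma^V(r-v)\cdots]\nu(dv)\Big)dr.\]
By equation~\eqref{eq:F-G-x}, the bracket is exactly $x(r)$. Hence this loss equals $\int_0^t x(r)y(r)dr$, which cancels the initial masses $\int_0^t x(s)y(s)ds$ of the infected cohorts. The initial masses of the vaccinated cohorts contribute $\int_0^t\mathcal{K}(x,y)(v)\nu(dv)$, and the initial cohort contributes $1$. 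What remains is precisely the stated identity. Note that equation~\eqref{eq:F-G-y} is not needed here; only the $x$-equation enters.

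\textbf{Main obstacle.} The delicate point is the product rule in Step 1 when $\nu$ has atoms, which is exactly the pulse case. There $d\,e^{-\nu((u,t])}$ is not $-e^{-\nu((u,t])}\nu(dt)$: at an atom the jump is $e^{-\nu((u,a))}(e^{-\nu(\{a\})}-1)$. So I will first treat the continuous part of $\nu$ with the Stieltjes chain rule, and then check at each atom that the jump of the cohort mass matches the corresponding $\nu(\{a\})$-weighted term in 5, 6 and 8. I must also check that the way $\mathcal{K}$ and the half-open intervals $(v,a]$ are chosen makes the contribution of an atom at $v$ consistent between the outflow of the old cohort and the inflow $\mathcal{K}(x,y)(v)\nu(\{v\})$ of the new one. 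If the bookkeeping only closes exactly for diffuse $\nu$, I would first prove the identity for diffuse $\nu$. I would then try to pass to general $\nu$ by approximating it by diffuse measures, using continuity of every term in $\nu$.
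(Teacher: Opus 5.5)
Your Steps 1 and 2 are the paper's proof, written in cohort language. The paper multiplies \eqref{eq:F-G-x} by $y$ and integrates over $[0,t]$, which is your observation that the total reinfection loss equals $\int_0^t x(r)y(r)\,dr$. It then evaluates each resulting term by the integrations by parts \eqref{eq-cons-2}--\eqref{eq-cons-6}, which are your per-cohort identities, and the $\int_0^t xy$ terms cancel. When $a\mapsto\nu((0,a])$ is continuous, your argument is complete and correct.

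The atom issue you flag is real, and the paper's proof passes over it. Its identities \eqref{eq-cons-3}, \eqref{eq-cons-5} and \eqref{eq-cons-6} use $d_a e^{-\nu((u,a])}=-e^{-\nu((u,a])}\nu(da)$. The remark $e^{-\nu((v,t])}+\int_v^t e^{-\nu((v,a])}\nu(da)=1$ after the statement likewise holds only for diffuse $\nu$.

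However, your fallback cannot succeed, because with atoms the identity itself is false. Take $\lambda_0^I\equiv0$ (so $\eta_0=0$), $\gamma_0^I\equiv1$ and $\gamma^V\equiv0$. Then $y\equiv0$ and $x(t)=e^{-\nu((0,t])}$ solve \eqref{eq:F-G-x}--\eqref{eq:F-G-y}, and $\mathcal{K}(x,y)(v)=1$ for $v>0$. Now let $\nu=\alpha\delta_\tau$ with $0<\tau<t$ and $\alpha>0$. Reading $\int_v^t\cdots\nu(da)$ over $(v,t]$, only the second, fifth and seventh terms are nonzero; closing the interval at $v$ merely adds $\alpha^2$ and does not help. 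The left side is then $(1+\alpha)e^{-\alpha}+\alpha$, while the right side is $1+\alpha$, and these differ since $e^{\alpha}>1+\alpha$.

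The same computation shows your atom-by-atom check fails at every atom. Let $G$ and $E$ be a cohort's vaccination- and reinfection-survival factors. The mass actually lost at an atom $a$ is $G(a-)E(a)\bigl(1-e^{-\nu(\{a\})}\bigr)$. The corresponding term in 5, 6 or 8 instead records $G(a-)E(a)\,\nu(\{a\})e^{-\nu(\{a\})}$.

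Nor are the terms continuous in $\nu$. For diffuse $\nu_n\to\alpha\delta_\tau$ you get $\int_{(0,t]}e^{-\nu_n((0,a])}\nu_n(da)=1-e^{-\nu_n((0,t])}\to1-e^{-\alpha}$. At the limit measure the same term equals $\alpha e^{-\alpha}\neq 1-e^{-\alpha}$. So the approximation would only prove an identity for the limit of the left sides, not for the left side at the limit measure.

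The right conclusion is that the proposition, and the paper's proof of it, require $\nu$ to have no atoms. You should state that hypothesis explicitly instead of trying to extend the argument.
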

				
				In the special case where, $\gamma^V=0,$ individuals acquire permanent immunity upon successful vaccination. By noting that,
				\[e^{-\nu((v,t])}+\int_{v}^{t}e^{-\nu((v,a])}\nu(da)=1,\]
				the terms in Proposition~\ref{Lem:mass-conserv} containing $\mathcal{K}$ simplify, allowing us to recover a consistent mass balance between the infectious and vaccinated histories.
				
				The following result establishes the existence and uniqueness of the solution $(x,y)$ of \eqref{eq:F-G-x}-\eqref{eq:F-G-y}. The proof is given in Section~\ref{exist}.
				\begin{theorem}\label{Th:exists}
					Under Assumption~\ref{AS-lambda}, the set of equations \eqref{eq:F-G-x}-\eqref{eq:F-G-y} has a unique solution $(\fS,\fF)\in D^2(\R_+,\R_+).$ 
				\end{theorem}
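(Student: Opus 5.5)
The plan is to prove existence with the Leray--Schauder fixed point theorem in $L^\infty_{loc}$ (the intro already announces this) and uniqueness with a Gronwall argument. Fix a horizon $T>0$ and work in the Banach space $E_T=L^\infty([0,T])^2$. Write the right-hand sides of \eqref{eq:F-G-x}--\eqref{eq:F-G-y} as a map $\Phi=(\Phi_1,\Phi_2)$ on pairs $(x,y)$. To keep the exponentials well defined and monotone, evaluate $\Phi$ at $(x^+\wedge 1,\,y^+\wedge \lambda_*)$, i.e. apply a truncation. A fixed point of the truncated map will turn out to be a fixed point of the original system, because a priori bounds hold for any solution.

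\textbf{A priori bounds.} For any solution with $x,y\ge0$, Proposition~\ref{Lem:mass-conserv} bounds the combination of expectations appearing in $x$ by the total mass. Since $\gamma\le1$ and $\lambda\le\lambda_*$ by Assumption~\ref{AS-lambda}, the same weights give $0\le x\le 1$ and $0\le y\le\lambda_*$. The simplest route is to observe that each term of $\Phi_1$ (resp.\ $\Phi_2$) is dominated by the corresponding term of the left-hand side of Proposition~\ref{Lem:mass-conserv}, with the $\nu(da)$-terms being nonnegative extras. The right-hand side $1+\int_0^t\mathcal K\,d\nu$ must then be handled by comparing to the left side minus its $\nu(da)$-terms. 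This comparison is delicate, so as a fallback I would get the bounds directly from the probabilistic meaning: $x(t)=\E[\text{current susceptibility}]\le1$.

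For Leray--Schauder I need that every solution of $z=\sigma\Phi(z)$, $\sigma\in[0,1]$, lies in a fixed ball. This follows immediately from the truncation, since $\|\Phi(z)\|_\infty\le C(T)$ uniformly. Here $C(T)$ depends on $\nu([0,T])$, which is finite for $\nu$ Radon on bounded sets; I assume local finiteness as implicit in $\nu$ sigma-finite with $\mathcal N_t<\infty$.

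\textbf{Continuity and compactness.} Continuity of $\Phi$ on $E_T$ follows from $|e^{-a}-e^{-b}|\le|a-b|$ and dominated convergence. Compactness is the main obstacle. The image functions are only càdlàg, because of $\nu(dv)$ atoms and the indicators $\indic{t\le\eta}$, so Arzelà--Ascoli is unavailable. I would use the Cherkas characterization \cite{cherkas1970compactness}: a bounded set in $L^\infty$ is relatively compact if it is uniformly approximable by finitely many step-type functions, roughly uniform equi-regulation outside a common set of jump locations. To get this, I decompose each term of $\Phi(z)$ into a part independent of $z$ and a $z$-dependent part. The part independent of $z$ carries the discontinuities: the law terms $\E[\gamma(t-s)\indic{t-s\le\eta}\cdots]$ evaluated at fixed exponent, and the factors $e^{-\nu((v,t])}$. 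The $z$-dependent part is Lipschitz in $t$ with constants uniform over the ball, because the exponents $\int_s^t\gamma y\,dr$ have derivatives bounded by $\lambda_*$. Products and convolutions of a fixed càdlàg kernel with uniformly Lipschitz families then satisfy the Cherkas criterion. This gives a compact $\Phi$, and Leray--Schauder yields a fixed point in $E_T$.

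\textbf{Regularity, uniqueness and extension.} The explicit integral representation shows that the fixed point is càdlàg: the $ds$-integrals are continuous, and the terms with indicators or $\nu$ are càdlàg in $t$ by dominated convergence and right-continuity of $\nu((\cdot,t])$. Nonnegativity is evident, and the a priori bounds remove the truncation. For uniqueness, take two solutions $(x,y)$ and $(\tilde x,\tilde y)$ bounded by $(1,\lambda_*)$. Then
\[|x(t)-\tilde x(t)|+|y(t)-\tilde y(t)|\le C(T)\int_0^t\big(|x-\tilde x|+|y-\tilde y|\big)(r)\,dr+C(T)\int_0^t\!\!\int_0^v|y-\tilde y|(r)\,dr\,\nu(dv),\]
where the $\mathcal K$ term is handled by Lipschitz dependence of $\mathcal K$ on $(x,y)$. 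The double integral is at most $\nu([0,T])\int_0^t|y-\tilde y|$, so Gronwall gives equality on $[0,T]$. By uniqueness, the solutions on increasing horizons $T$ are consistent, which yields a global solution in $D^2(\R_+,\R_+)$.
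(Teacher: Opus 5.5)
\textbf{Verdict: there is a genuine gap, in the step that removes your truncation.} That step now carries the entire a priori estimate. Working with $\tilde\Phi(x,y)=\Phi(x^+\wedge1,\,y^+\wedge\lambda_*)$ makes the Leray--Schauder bound free. But you must then show that the fixed point $z^*=(x^*,y^*)$ of $\tilde\Phi$ satisfies $0\le x^*\le1$ and $0\le y^*\le\lambda_*$, and this is not done.

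\begin{itemize}
\item Saying ``a priori bounds hold for any solution'' does not cover $z^*$. You do not know $z^*$ solves \eqref{eq:F-G-x}--\eqref{eq:F-G-y} until those bounds are established. So you need an invariance argument (e.g.\ a first-exit time) for the truncated equation itself.
\item The bounds are never actually proved. Proposition~\ref{Lem:mass-conserv} only controls the total mass by $1+\int_0^t\mathcal K(x,y)(v)\,\nu(dv)$.
\item Your fallback ``$x(t)=\E[\text{current susceptibility}]\le1$'' is circular. Proposition~\ref{prop:equal-G-F} identifies a solution with $(\E[\gamma_{A(t),\mathcal N(t)}(t)],\E[\lambda_{A(t),\mathcal N(t)}(t)])$ only by invoking the uniqueness part of the theorem you are proving. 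The process $A$ is itself driven by the unknown force of infection. In any case, this representation says nothing about $z^*$.
\item Nonnegativity is not evident either. For truncated inputs the integral in $\mathcal K(x,y)(v)$ can be as large as $v\lambda_*$. So $\mathcal K$, and with it the last term of $\Phi_1$, can be negative.
\end{itemize}

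\textbf{How the paper avoids this.} It does not truncate and does not aim for the sharp bound. It applies Schaefer's theorem to $H=(F,G)$ on nonnegative pairs and bounds the set $E=\{z=\alpha H(z),\ \alpha\in[0,1]\}$ directly. Repeating the computation of Section~\ref{sec:mass-conserv} for $z=\alpha H(z)$ shows that $\alpha$ times the left side of Proposition~\ref{Lem:mass-conserv} equals $\alpha+\alpha\int_0^t\mathcal K\,d\nu-(1-\alpha)\int_0^t x(s)y(s)\,ds$. Since $\mathcal K\le1$ for nonnegative arguments, this gives $x\le1+\nu([0,T])$ and $y\le\lambda_*(1+\nu([0,T]))$. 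That crude bound is all Schaefer's theorem needs.

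\textbf{If you keep the truncation.} You must supply the ``delicate comparison'' you skipped. At least for diffuse $\nu$, recognize the $\nu(da)$-terms of Proposition~\ref{Lem:mass-conserv} as $\int_0^t(\text{non-infected mass at }a)\,\nu(da)$, and $\mathcal K$ as $1-(\text{infected mass})$. The identity then becomes the linear equation $M(t)-1=-\int_0^t(M(a)-1)\,\nu(da)$ for the total mass $M$, which forces $M\equiv1$. You would still need to transfer this to the truncated equation up to a first exit time, tracking the sign of $\mathcal K$ along the way.

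\textbf{What survives.} Your uniqueness argument does not need the sharp bound, since c\`adl\`ag solutions are locally bounded. Your compactness sketch is essentially the paper's Cherkas argument.
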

				We now state the main result of this article: our functional law of large numbers. The proof is provided in Section~\ref{proof-FLLN}.
				\begin{theorem}\label{Th:FLLN}
					Under Assumption~\ref{AS-lambda},  
					\begin{equation*}
						\left(\fS^N,\fF^N\right)\xrightarrow[N\to\infty]{\mathbb{P}}\left(\fS,\fF\right),\text{ in }D^2,
					\end{equation*}
					where $\left(\fS,\fF\right)$ is the unique solution of the system of equation~\eqref{eq:F-G-x}-\eqref{eq:F-G-y}.
					
					Given the solution $\left(\fS,\fF\right),$
					\[\mathfrak{U}^N(\varphi)\xrightarrow[N\to\infty]{\mathbb{P}}\mathfrak{U}(\varphi),\text{ in }D\]
					where for $t\geq0,$
					\begin{align}\label{cor:compartiment}
						&\mathfrak{U}_t(\varphi)=\E\left[\varphi_{0}^I(t)\indic{t\leq\eta_0}\exp\left(-\int_{0}^{t}\gamma_{0}^I(r)\fF(r)dr\right)\right]\non\\
						&\hspace{2cm}+\E\left[\varphi_{0}^I(t)e^{-\nu((\eta_0,t])}\indic{t>\eta_0}\exp\left(-\int_{0}^{t}\gamma_{0}^I(r)\fF(r)dr\right)\right]\non\\
						&\hspace{3cm}+\int_{0}^{t}\E\left[\varphi^I(t-s)\indic{t-s\leq\eta}\exp\left(-\int_{s}^{t}\gamma^I(r-s)\fF(r)dr\right)\right]\fF(s)\fS(s)ds\non\\
						&\hspace{1.5cm}+\int_{0}^{t}\E\left[\varphi^I(t-s)e^{-\nu((\eta+s,t])}\indic{t-s>\eta}\exp\left(-\int_{s}^{t}\gamma^I(r-s)\fF(r)dr\right)\right]\fF(s)\fS(s)ds\non\\
						&\hspace{1cm}+\int_0^t\mathcal{K}(\fS,\fF)(v)e^{-\nu((v,t])}\E\left[\varphi^V(t-v)\exp\left(-\int_{v}^{t}\gamma^V(r-v)\fF(r)dr\right)\right]\nu(dv).
					\end{align}
				\end{theorem}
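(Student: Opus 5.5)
The plan is a coupling argument in the spirit of \cite{forien-Zotsa2022stochastic}. Let $(\fS,\fF)$ be the unique solution given by Theorem~\ref{Th:exists}. For each $k$ we build an auxiliary counting process $A_k$ with the same primitives $(Q_k,Q_k^V,\lambda^I_{k,i},\gamma^I_{k,i},\gamma^V_{k,j})$. It is obtained by replacing $\fF^N$ in $\Upsilon^N_k$ by the deterministic $\fF$:
\[
A_k(t)=\int_{[0,t]\times\R_+}\indic{u\le \gamma_{k,A_k(r^-),\mathcal N^k(r^-)}(r^-)\fF(r^-)}Q_k(dr,du).
\]
The resulting individuals $k=1,\dots,N$ are i.i.d. We then denote by $\gamma_k(t)$, $\lambda_k(t)$, $\varphi_k(t)$ their current susceptibility, infectivity and test function.

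\textbf{First step: identify the expectations of the i.i.d. system.} We show that $\E[\gamma_1(t)]=\fS(t)$, $\E[\lambda_1(t)]=\fF(t)$ and $\E[\varphi_1(t)]=\mathfrak U_t(\varphi)$. We decompose on the last infection time $s=\tau_{1,A_1(t)}$ (or $s=0$ if there is no infection) and on whether the individual was vaccinated after it. Given an infection at $s$ with $(\lambda^I,\gamma^I)$, the probability of no reinfection on $(s,t]$ is $\exp(-\int_s^t\gamma^I(r-s)\fF(r)dr)$, provided no successful vaccination occurs. The probability of no vaccination campaign in $(s+\eta,t]$ is $e^{-\nu((s+\eta,t])}$, by independence of $Q_1^V$. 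The intensity of infections at $s$ is $\E[\gamma_1(s)\fF(s)]=\fS(s)\fF(s)$, which yields the third and fourth terms. For the vaccinated branch we condition on the last vaccination $v$: campaigns arrive at rate $\nu(dv)$, and the individual must be eligible at $v$, which has probability $\mathcal K(\fS,\fF)(v)$ (this is exactly the complement computed in \eqref{eq-Psi-0}). No campaign may occur in $(v,t]$, giving $e^{-\nu((v,t])}$, and there must be no reinfection under $\gamma^V$. We make this rigorous by writing the compensator of $A_1$ and using the Poisson structure, as in the paper's alternative approach of computing the probability of being infected at the vaccination time. This avoids exponential moments of non-predictable Poisson integrals. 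The identification then shows that the expectations solve \eqref{eq:F-G-x}--\eqref{eq:F-G-y}, and uniqueness from Theorem~\ref{Th:exists} closes the loop.

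\textbf{Second step: law of large numbers for the i.i.d. system in $D$.} We prove that $\frac1N\sum_k\gamma_k\to\fS$, $\frac1N\sum_k\lambda_k\to\fF$ and $\frac1N\sum_k\varphi_k\to\mathfrak U(\varphi)$ in probability in $D$. Pointwise convergence follows from the LLN. For the functional statement we use a criterion for averages of i.i.d. $D$-valued processes of the form ``random regular part times indicators of finitely many jump times with continuous-in-law laws'', as in \cite{forien-Zotsa2022stochastic}. The new ingredient is the jumps at vaccination times. These are common in law but independent across $k$; the only deterministic jump locations come from atoms of $\nu$, and the limit inherits those jumps. We handle them by splitting $\nu$ into its atomic and diffuse parts and proving uniform convergence on compacts after subtracting the deterministic atoms.

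\textbf{Third step: coupling bound, which is the main obstacle.} We estimate $\E[\sup_{t\le T}|A^N_k(t)-A_k(t)|]$, or the probability that individual $k$'s trajectories differ before $T$. Both processes use the same $Q_k$ and the same vaccination times, so a discrepancy can only start when $u$ falls between the two rates. Hence
\[
\P(\text{$k$ differs before }t)\le \E\int_0^t\big|\Upsilon^N_k(r)-\gamma_k(r)\fF(r)\big|dr\le \int_0^t\E|\fF^N(r)-\fF(r)|dr+\lambda_*\int_0^t\P(\text{differs before } r)\,dr.
\]
For the first term, $|\fF^N-\fF|\le \frac1N\sum_\ell|\lambda^N_\ell-\lambda_\ell|+|\frac1N\sum_\ell\lambda_\ell-\fF|$. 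The second piece is $O(N^{-1/2})$ in $L^2$ by independence, and the first is bounded by $\lambda_*$ times the fraction of differing individuals. The delicate point is that vaccination eligibility depends on the indicator $T^V-\tau\le\eta$. Since eligibility is determined by identical primitives once the infection histories agree, a differing individual can only be one whose infection history has already differed. Gronwall then gives $\sup_k\P(\text{differs before }T)\le C_TN^{-1/2}$. This implies $\E\sup_{t\le T}|\fS^N-\frac1N\sum\gamma_k|+|\fF^N-\frac1N\sum\lambda_k|+|\mathfrak U^N(\varphi)-\frac1N\sum\varphi_k|\to0$ (using bounded $\varphi$). Combined with the second step, this yields the theorem.
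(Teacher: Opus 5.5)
Your architecture is the paper's. You build i.i.d. auxiliary individuals from the same primitives, driven by the deterministic $\fF$. You identify their means by decomposing on the last infection and the last campaign, with eligibility probability $\mathcal K$, as in Section~\ref{sec-exp-F-G}. Your Gronwall coupling bound of order $\frac{\lambda_*}{\sqrt N}Te^{2\lambda_*T}$ is exactly Lemma~\ref{lem_inq}.

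\textbf{Step 2 fails as described.} Your premise that the only deterministic jump locations of $\gamma_k,\lambda_k,\varphi_k$ come from atoms of $\nu$ is false under Assumption~\ref{AS-lambda}. Two counterexamples:
\begin{enumerate}
\item The functions $\lambda_0^I,\gamma_0^I$ depend on absolute time and can jump at fixed times. For instance, take $\lambda_0^I=\beta\indic{[0,\eta_0)}$ with $\mathbb{P}(\eta_0=1)>0$. A positive fraction of the initially infected then recover simultaneously at time $1$.
\item Take $\gamma^V=\indic{[a,\infty)}$ with $a>0$ fixed. Individuals successfully vaccinated at an atom $v$ of $\nu$ then share a jump at $v+a$, which is not removed by subtracting the atoms of $\nu$.
\end{enumerate}
So a criterion based on jump times that are continuous in law, after removing the atoms of $\nu$, does not cover the general case.

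The detour is also unnecessary. The triples $(\gamma_k,\lambda_k,\varphi_k)_k$ are i.i.d. bounded $D$-valued random elements. Rao's strong law \cite{rao1963law} then gives $\sup_{t\le T}\bigl|\frac1N\sum_{k}\gamma_k(t)-\E[\gamma_1(t)]\bigr|\to0$ a.s., and likewise for $\lambda_k$ and $\varphi_k$. This needs no condition on the jump structure, just as Glivenko--Cantelli holds for atomic distributions. Uniform convergence on compacts then combines directly with your $\E\sup_{t\le T}$ coupling estimate; this is what the paper does.

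\textbf{Step 1: how to close the loop.} Your auxiliary process is driven by the frozen $\fF$. So the means $x=\E[\gamma_1]$ and $y=\E[\lambda_1]$ do not a priori solve \eqref{eq:F-G-x}--\eqref{eq:F-G-y}. They solve these equations with $\fF$ in place of $y$: in the exponentials, in the infection rate $x(s)\fF(s)$, and in $\mathcal K(x,\fF)$. Writing the infection intensity as $\fS(s)\fF(s)$ presupposes the conclusion. The loop is closed by uniqueness for this affine Volterra equation in $x$, not by the nonlinear uniqueness of Theorem~\ref{Th:exists}:
\begin{enumerate}
\item Uniqueness follows from Gronwall, with the same estimates as in the uniqueness part of Theorem~\ref{Th:exists}, using $\nu([0,T])<\infty$ to handle the $\mathcal K(x,\fF)(v)\,\nu(dv)$ term.
\item Since $\fS$ solves this equation, $x=\fS$.
\item Then $y=\fF$ follows from \eqref{eq:F-G-y}.
\end{enumerate}
Done this way, your construction is slightly cleaner than the paper's: it avoids the self-referential rate $\gamma_{A(t),\mathcal N(t)}(t)\,\E[\lambda_{A(t),\mathcal N(t)}(t)]$ that the paper uses to define its auxiliary process.
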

				We first note that as for $\varphi_0^I(t)=\indic{t\leq \eta_0}$, $\varphi^I(t)=\indic{t\leq \eta},$ and $\varphi^V(t)=0,$ the proportion of infected individuals $\bar{I}^N=\mathfrak{U}^N(\varphi)$ converges in $D$ to $\mathfrak{U}(\varphi):=\bar{I}$ which reduces to:
				\begin{equation}\label{eq-I}
					\begin{aligned}
						\bar{I}(t)&=\E\Big[\indic{ t\leq\eta_{0}}\exp\left(-\int_{0}^{t}\gamma_{0}^I(r)y(r)dr\right)\Big]\\
						&\hspace{1cm}+\int_{0}^{t}\E\Big[\indic{t-s\leq\eta}\exp\left(-\int_{s}^{t}\gamma^I(r-s)y(r)dr\right)\Big]x(s)y(s)ds,
					\end{aligned}
				\end{equation}
				for $t\geq0$. Therefore $\mathcal{K}(\fS,\fF)=1-\bar{I}.$
				We then rewrite \eqref{eq:F-G-x}-\eqref{eq:F-G-y} as,
				\begin{equation}\label{eq-inf-F-sf-S}
					\begin{aligned}
						&\fS(t)=
						\E\left[\gamma_{0}^I(t)\indic{t\leq\eta_{0}}\exp\left(-\int_{0}^{t}\gamma_{0}^I(r)\fF(r)dr\right)\right]\\
						&\hspace{2cm}+\E\left[\gamma_{0}^I(t)e^{-\nu((\eta_{0},t])}\indic{t>\eta_{0}}\exp\left(-\int_{0}^{t}\gamma_{0}^I(r)\fF(r)dr\right)\right]\\
						&\hspace{1cm}+\int_{0}^{t}\E\left[\gamma^I(t-s)\indic{t-s\leq\eta}\exp\left(-\int_{s}^{t}\gamma^I(r-s)\fF(r)dr\right)\right]\fS(s)\fF(s)ds\\
						&\hspace{1cm}+\int_{0}^{t}\E\left[\gamma^I(t-s)e^{-\nu((s+\eta,t])}\indic{t-s>\eta}\exp\left(-\int_{s}^{t}\gamma^I(r-s)\fF(r)dr\right)\right]\fS(s)\fF(s)ds\\
						&\hspace{1cm}+\int_0^t\big(1-\bar{I}(v)\big)e^{-\nu((v,t])}\E\left[\gamma^V(t-v)\exp\left(-\int_{v}^{t}\gamma^V(r-v)\fF(r)dr\right)\right]\nu(dv)							
					\end{aligned}
				\end{equation}
				and 
				\begin{equation}\label{eq-inf-F-sf-F}
					\begin{aligned}
						&\fF(t)=
						\E\left[\lambda_{0}^I(t)\indic{t\leq\eta_{0}}\exp\left(-\int_{0}^{t}\gamma_{0}^I(r)\fF(r)dr\right)\right]\\
						&\hspace{2cm}+\E\left[\lambda_{0}^I(t)e^{-\nu((\eta_{0},t])}\indic{t>\eta_{0}}\exp\left(-\int_{0}^{t}\gamma_{0}^I(r)\fF(r)dr\right)\right]\\
						&\hspace{1cm}+\int_{0}^{t}\E\left[\lambda^I(t-s)\indic{t-s\leq\eta}\exp\left(-\int_{s}^{t}\gamma^I(r-s)\fF(r)dr\right)\right]\fS(s)\fF(s)ds\\
						&\hspace{1cm}+\int_{0}^{t}\E\left[\lambda^I(t-s)e^{-\nu((s+\eta,t])}\indic{t-s>\eta}\exp\left(-\int_{s}^{t}\gamma^I(r-s)\fF(r)dr\right)\right]\fS(s)\fF(s)ds.							
					\end{aligned}
				\end{equation}
				The random terms in \eqref{eq-inf-F-sf-S} and \eqref{eq-inf-F-sf-F} are $\lambda_0^I,\,\gamma_0^I,\,\eta_0,\,\lambda^I,\,\gamma^I,$, $\eta$ and $\gamma^V$.
				
				Note that when the measure $\nu$ is equal to zero, the set of equations~\eqref{eq-inf-F-sf-S}-\eqref{eq-inf-F-sf-F} coincide with the one in ~\cite{forien-Zotsa2022stochastic}. This yields a generalization of the model introduced by Forien, Pang, Pardoux and Zotsa-Ngoufack~\cite{forien-Zotsa2022stochastic}. Consequently our model also generalizes the one in \cite{guerin2025stochastic} when memory is removed, the vaccination model in \cite{foutel2025optimal} and the classical epidemic model in~\cite{britton2019stochastic}.
				
				\begin{remark}\label{RQ-bound-phi}
					From Proposition~\ref{prop:equal-G-F} below we have for all $t\geq0,\,\fF(t)\leq\lambda_*$ and $\fS(t)\leq1.$ Since for $t\geq0,$ $\mathfrak{U}^N(\varphi)(t)\leq\max(\|\varphi^I_0\|_\infty,\|\varphi^I\|_\infty,\|\varphi^V\|_\infty),$ by convergence it follows that: \[\mathfrak{U}(\varphi)(t)\leq\max(\|\varphi^I_0\|_\infty,\|\varphi^I\|_\infty,\|\varphi^V\|_\infty).\] 
				\end{remark}				
				
				\bigskip
				We make the following Assumption introduced in \cite{forien-Zotsa2022stochastic}.
				\begin{assumption}\label{ASS:lbd-g}
					Almost surely,
					\begin{multline*}\label{eqq10}
						\sup\{t\geq0,\,\lambda_0^I(t)>0\}\leq\inf\{t\geq0,\,\gamma_0^I(t)>0\}\text{ and }\\ \sup\{t\geq0,\,\lambda^I(t)>0\}\leq\inf\{t\geq0,\,\gamma^I(t)>0\}.
					\end{multline*}
				\end{assumption}
				Assumption~\ref{ASS:lbd-g} means that, as long as an individual has no recovered, they cannot be reinfected. It also implies that the functions $\lambda^I_0$ and $\lambda^I$ have compact support that is disjoint from those of $\gamma^I_0$ and $\gamma^I$. In this case, the functional law of large numbers satisfies a different set of equations. More precisely, equations~\eqref{eq-inf-F-sf-S} and equation~\eqref{eq-inf-F-sf-F} is replaced by:
				\begin{equation}\label{eq-inf-F-sf-S-no-reinf}
					\begin{aligned}
						&\fS(t)=
						\E\left[\gamma_{0}^I(t)e^{-\nu((\eta_{0},t])}\indic{t>\eta_{0}}\exp\left(-\int_{0}^{t}\gamma_{0}^I(r)\fF(r)dr\right)\right]\\
						&\hspace{1cm}+\int_{0}^{t}\E\left[\gamma^I(t-s)e^{-\nu((s+\eta,t])}\indic{t-s>\eta}\exp\left(-\int_{s}^{t}\gamma^I(r-s)\fF(r)dr\right)\right]\fS(s)\fF(s)ds\\
						&\hspace{1cm}+\int_0^t(1-\bar{I}(v))e^{-\nu((v,t])}\E\left[\gamma^V(t-v)\exp\left(-\int_{v}^{t}\gamma^V(r-v)\fF(r)dr\right)\right]\nu(dv)\end{aligned}
				\end{equation}
				and 
				\begin{equation}\label{eq-inf-F-sf-F-no-reinf}
					\begin{aligned}
						\fF(t)&=\E\left[\lambda_{0}^I(t)\right]+\int_{0}^{t}\E\left[\lambda^I(t-s)\right]\fS(s)\fF(s)ds\\
						&=\overline{I}(0)\overline{\lambda}_0(t)+\int_{0}^{t}\overline{\lambda}(t-s)\fF(s)\fS(s)ds,
					\end{aligned}
				\end{equation}
				where
				\[\bar{I}(0)=\mathbb{P}(\eta_{0}>0),\quad\bar{\lambda}_0(t)=\E\left[\lambda_{0}^I(t)\big|\eta_{0}>0\right]\text{ and }\bar{\lambda}(t)=\E\left[\lambda^I(t)\right].\]
				We note that, in this case, the total force of infection given by equation~\eqref{eq-inf-F-sf-F-no-reinf} is identical to that of the model without periodic vaccination described in \cite{forien-Zotsa2022stochastic}.		
				
				We also note that, under Assumption~\ref{ASS:lbd-g}, equation~\eqref{eq-I} is replaced too by:
				\begin{equation}\label{eq-I-A}
					\bar{I}(t)=\bar{I}(0)F^c_{I,0}(t)+\int_{0}^{t}F^c_I(t-s)\fF(s)\fS(s)ds,
				\end{equation}
				where 
				\[ F^c_{I,0}(t)=\mathbb{P}(\eta_0>t\big|\eta_0>0),\quad F^c_I(t)=\mathbb{P}(\eta>t).\]
				\subsection{A simplified version of the FLLN and its connection with the literature.}\label{sec-FLLN-simply}
				In this section, we present a particular case of Theorem~\ref{Th:FLLN}, referred to as the non-Markovian SIRS model with pulse vaccination. In this specific scenario, an individual becomes immediately susceptible again upon recovering after their immune period. From this result, we recover the classical epidemic model with pulse vaccination. 
				
				We introduce $\varsigma_0$ and $\varsigma$ the random variables representing the duration during which individual is full immunized. We also introduce $\eta^V$ as the random variable representing the duration for which the vaccine provides full immunity.
				We then introduce the following quantities:
				\[V(0)=0,\quad\overline{S}(0)=\mathbb{P}(\eta_0=0,\varsigma_0=0),\quad\overline{I}(0)=\mathbb{P}(\eta_0>0), \text{ and }\bar{R}(0)=\mathbb{P}(\eta_0=0,\varsigma_0>0).\]
				In this specific SIRS model, where the vaccine does not provide permanent immunity, the functions $\lambda^I_0,\, \lambda^I,\,\gamma_{0}^I,\,\gamma^I$ and $\gamma^V$ are defined as follows:
				\[\lambda_0^I(t)=\beta\indic{\eta_0\geq t},\quad\lambda^I(t)=\beta\indic{\eta\geq t},\quad\gamma^I_0(t)=\indic{t>\eta_{0}+\varsigma_0},\quad\gamma^I(t)=\indic{t>\eta+\varsigma}\text{ and }\gamma^V(t)=\indic{t\geq\eta^V}.\]
				We set,
				\[ F^c_{I,0}(t)=\mathbb{P}(\eta_0>t\big|\eta_0>0),\quad F^c_I(t)=\mathbb{P}(\eta>t),\quad F^c_V(t)=\mathbb{P}(\eta^V>t),\]
				\[ G^c_0(t|u)=\mathbb{P}(\varsigma_0>t\big|\varsigma_0>0,\eta_0=u),\qquad G^c(t|u)=\mathbb{P}(\varsigma>t\big|\eta=u).\]
				We recall that $\bar{S}(t)$ is the proportion of susceptible individuals at time $t$, $\bar{R}(t)$ for recovered individuals and $V(t)$ denotes the proportion of vaccinated individuals who remain fully immunized. Consequently,
				\begin{itemize}
					\item for $\varphi^I_0(t)=\indic{\eta_{0}+\varsigma_0\leq t},$ $\varphi^I(t)=\indic{\eta+\varsigma\leq t},$ and $\varphi^V(t)=\indic{\eta^V\leq t},$ we have $\bar{S}(t)=\mathfrak{U}_t(\varphi)=\fS(t)$ 
					\item for $\varphi_0^I(t)=\indic{\eta_{0}\leq t<\eta_{0}+\varsigma_0}$, $\varphi^I(t)=\indic{\eta\leq t<\eta+\varsigma}$, and $\varphi^V(t)=0,$ we have $\bar{R}(t)=\mathfrak{U}_t(\varphi).$
					\item for $\varphi^I_0(t)=\varphi^I(t)=0$ and $\varphi^V(t)=\indic{t\leq\eta^V},$ we have $V(t)=\mathfrak{U}_t(\varphi).$
				\end{itemize}
				We first note that, from equation~\eqref{eq-inf-F-sf-F-no-reinf} and \eqref{eq-I-A} that, $\fF(t)=\beta\bar{I}(t).$
				From equation~\eqref{cor:compartiment}, we then derive following Proposition.
				\begin{prop}\label{prop-FLLN}
					\begin{equation}\label{Pop-eq-S}
						\begin{aligned}
							&\bar{S}(t)=
							\bar{S}(0)e^{-\nu((0,t])}\exp\left(-\beta\int_{0}^{t}\bar{I}(r)dr\right)+\bar{R}(0)\int_{0}^{t}\exp\left(-\beta\int_{y}^{t}\bar{I}(r)dr\right)G_0^c(dy|0)e^{-\nu((0,t])}\\
							&\hspace{2cm}+\bar{I}(0)\int_{0}^{t}\int_0^{t-x} e^{-\nu((x,t])}\exp\left(-\beta\int_{x+u}^{t}\bar{I}(r)dr\right)G_0^c(du|x)F^c_{I,0}(dx)\\
							&\hspace{1cm}+\beta\int_{0}^{t}\int_0^{t-s}\int_{0}^{t-s-x} e^{-\nu((s+x,t])}\exp\left(-\beta\int_{s+x+y}^{t}\bar{I}(r)dr\right)G^c(dy|x)\bar{S}(s)\bar{I}(s)F^c_I(dx)ds\\
							&\hspace{1cm}+\int_0^t\int_{0}^{t-v}\big(1-\bar{I}(v)\big)e^{-\nu((v,t])}\exp\left(-\beta\int_{v+x}^{t}\bar{I}(r)dr\right) F^c_V(dx)\nu(dv),							
						\end{aligned}
					\end{equation}
					\begin{equation}\label{Pop-eq-I}
						\bar{I}(t)=\bar{I}(0)F^c_{I,0}(t)+\int_{0}^{t}F^c_I(t-s)\fF(s)\fS(s)ds,
					\end{equation}
					\begin{equation}\label{Pop-eq-R}
						\begin{aligned}
							&\bar{R}(t)=
							\bar{R}(0)G_0^c(t|0)e^{-\nu((0,t])}+\bar{I}(0)\int_0^t e^{-\nu((x,t])}G^c_0(t-x|x)F_{I,0}^c(dx)\\
							&\hspace{1cm}+\beta\int_{0}^{t}\int_0^{t-s} e^{-\nu((s+x,t])}G^c(t-s-x|x)\bar{S}(s)\bar{I}(s)F^c_I(dx)ds,							
						\end{aligned}
					\end{equation}
					and 
					\begin{equation}\label{Pop-eq-V}
						V(t)=\int_0^t\big(1-\bar{I}(v)\big)e^{-\nu((v,t])}F^c_V(t-v)\nu(dv).
					\end{equation}
				\end{prop}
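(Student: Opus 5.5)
The plan is to specialise formula~\eqref{cor:compartiment} of Theorem~\ref{Th:FLLN} to the SIRS choices of $\lambda^I_0,\lambda^I,\gamma^I_0,\gamma^I,\gamma^V$ and of $\varphi$, and then evaluate each expectation by conditioning on the durations $(\eta_0,\varsigma_0)$, $(\eta,\varsigma)$ and $\eta^V$.

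First, Assumption~\ref{ASS:lbd-g} holds here: $\lambda^I$ is supported on $[0,\eta]$ and $\gamma^I$ on $(\eta+\varsigma,\infty)$, and similarly for the initial functions. Hence \eqref{eq-inf-F-sf-F-no-reinf} and \eqref{eq-I-A} apply. Since $\bar\lambda_0(t)=\beta F^c_{I,0}(t)$ and $\bar\lambda(t)=\beta F^c_I(t)$, we get $\fF=\beta\bar I$, and \eqref{Pop-eq-I} is exactly \eqref{eq-I-A}.

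For $\bar S$ and $\bar R$ I would use \eqref{cor:compartiment}. The terms with $\indic{t\le\eta_0}$ and $\indic{t-s\le\eta}$ vanish, because on those events $\gamma^I_0(t)=0$, resp. $\varphi=0$. In the surviving terms the exponent simplifies: $\int_0^t\gamma_0^I(r)\fF(r)dr=\beta\int_{(\eta_0+\varsigma_0)\wedge t}^t\bar I(r)dr$, and similarly after an infection at time $s$. I would then split the law of $(\eta_0,\varsigma_0)$ into three parts:
\begin{itemize}
\item $\{\eta_0=0,\varsigma_0=0\}$, with weight $\bar S(0)$, which gives the first term of \eqref{Pop-eq-S};
\item $\{\eta_0=0,\varsigma_0>0\}$, with weight $\bar R(0)$, where we integrate against the law $G^c_0(dy|0)$ of $\varsigma_0$ on $\{y\le t\}$, because $\varphi_0^I(t)=\indic{\eta_0+\varsigma_0\le t}$;
\item $\{\eta_0>0\}$, with weight $\bar I(0)$, where we integrate first against $F^c_{I,0}(dx)$ and then against $G^c_0(du|x)$ on $\{x+u\le t\}$, keeping the factor $e^{-\nu((x,t])}$.
\end{itemize}
Here $F^c(dx)$ is read as the law of the duration, up to the sign convention used in the statement. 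The infection term after time $s$ is treated the same way with $F^c_I(dx)$ and $G^c(dy|x)$. Using $\fS\fF=\beta\bar S\bar I$, which needs $\fS=\bar S$, this gives the fourth term.

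The vaccination term comes from $\mathcal K(\fS,\fF)=1-\bar I$ (noted after Theorem~\ref{Th:FLLN}) together with $\varphi^V(t-v)=\indic{\eta^V\le t-v}$ and $\int_v^t\gamma^V(r-v)\fF(r)dr=\beta\int_{v+\eta^V}^t\bar I$, integrated against the law of $\eta^V$. To get $\fS=\bar S$, observe that with these $\varphi$ the expression $\mathfrak U(\varphi)$ coincides term by term with \eqref{eq-inf-F-sf-S-no-reinf}: indeed $\varphi^I=\gamma^I$ up to the boundary point $\{t=\eta+\varsigma\}$, which does not matter after integration provided the laws have no atoms there, or under the right-continuity conventions used.

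For $\bar R$ the exponential factor is identically $1$, because $\gamma^I=0$ on $[\eta,\eta+\varsigma)$. Hence the expectations reduce to $\mathbb P(\varsigma>t-s-x\mid\eta=x)$ integrated against the law of $\eta$, and the vaccination term vanishes since $\varphi^V=0$. For $V$ we have $\varphi^I=0$ and $\varphi^V(t-v)=\indic{t-v\le\eta^V}$. On that event $\gamma^V(r-v)=0$ for $r<v+\eta^V$, so the exponential equals $1$ and we obtain $F^c_V(t-v)$, giving \eqref{Pop-eq-V}.

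The only delicate point is bookkeeping: matching the boundary conventions ($\le$ versus $<$, $\eta$ versus $\eta+\varsigma$) and the conditional laws $G^c_0(\cdot|x)$ with the mixture structure of $(\lambda_0^I,\gamma_0^I)$, so that the weights $\bar S(0),\bar R(0),\bar I(0)$ come out correctly. There is no analytic difficulty beyond Fubini and conditioning.
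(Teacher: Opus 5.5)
Your proposal is correct and is essentially the paper's own argument. The paper obtains Proposition~\ref{prop-FLLN} by specialising formula~\eqref{cor:compartiment} of Theorem~\ref{Th:FLLN} to the SIRS choices of $\lambda^I_0,\lambda^I,\gamma^I_0,\gamma^I,\gamma^V$ and $\varphi$, using $\fF=\beta\bar I$ (from \eqref{eq-inf-F-sf-F-no-reinf} and \eqref{eq-I-A}), $\fS=\bar S$ and $\mathcal K(\fS,\fF)=1-\bar I$, and conditioning on the durations exactly as you do; the boundary ($\le$ versus $<$) and sign conventions you flag are bookkeeping issues inherited from the paper's own definitions, not gaps in your argument.
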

				\begin{example}\label{Ex-Model-SIRS}
					We consider a scenario where an epidemic occurs, and subsequently, public health authorities develop a vaccine that provides partial (non-permanent) immunity. Typically, at the onset of a campaign, vaccines are administered continuously; however, after some time, health officials may transition to discrete vaccination events to maintain the disease at an endemic level. To model this, we define the vaccination measure $\nu$ as follows. For a sequence of deterministic time $(\tau_n)_n$ and the sequence of non-negative real numbers $(\alpha_n)_n$, we let, 
					\[\nu_1(dv)=w(v)dv,\qquad \nu_2(dv)=\sum_{n} \alpha_n \delta_{\tau_n}(dv)\]
					and define the measure $\nu$  as follows
					\begin{equation}
						\nu(dv) = \nu_1(dv) + \nu_2(dv).
					\end{equation}
					Consequently, for $t\in (\tau_n,\,\tau_{n+1}),\,\nu((v,t])=\int_{v}^{t}w(v)dv+\sum_{k=1}^{n}\alpha_k-\nu_2([0,v]).$
					We then derive that $\partial_t\nu((v,t])=w(t).$
					
					We now consider that: \[G^c_0(t|x)=G^c(t|x)=e^{-\theta t},\quad F_{I,0}^c(x)=F_I^c(x)=e^{-\sigma x},\quad F_{V,}^c(t)=e^{-\kappa t}.\]
					We then derive that, system of equation~\eqref{Pop-eq-S},~\eqref{Pop-eq-I},~\eqref{Pop-eq-R} and~\eqref{Pop-eq-V} reduces to the following ODE model:
					for $t \in (\tau_n, \tau_{n+1}[$, the population proportions follow:
					\begin{equation}\label{cl-ode}
						\left\{\begin{aligned}
							&\frac{d \bar{S}}{dt}(t) = -\beta \bar{S}(t) \bar{I}(t)  + \theta \bar{R}(t) + \kappa V(t)- w(t)\bar{S}(t) \\
							&\frac{d \bar{I}}{dt}(t) = \beta \bar{S}(t) \bar{I}(t) - \sigma \bar{I}(t) \\
							&\frac{d \bar{R}}{dt}(t) = \sigma \bar{I}(t) - (\theta + w(t))\bar{R}(t)\\
							&\frac{dV}{dt}(t) = (1-\bar{I}(t))w(t) - (\kappa + w(t))V(t)\\
							&\bar{S}(t)+\bar{I}(t)+\bar{R}(t)+V(t)=1.
						\end{aligned}\right.
					\end{equation}

					\begin{equation}
						\left\{\begin{aligned}
							\bar{S}(\tau_n^+) &= \bar{S}(\tau_n^-) e^{-\alpha_n} \\
							\bar{R}(\tau_n^+) &= \bar{R}(\tau_n^-) e^{-\alpha_n} \\
							V(\tau_n^+) &= V(\tau_n^-) + \left( \bar{S}(\tau_n^-) + \bar{R}(\tau_n^-) \right)(1 - e^{-\alpha_n})
						\end{aligned}\right.
					\end{equation}
					Detailed settings for this model are provided in \ref{se-set-example}.
				\end{example}
				
				\begin{remark}
					Note that when $w\equiv0$ our SIRS ODE model reduces to a pulse vaccination model with discrete periodical vaccine. Conversely, when $\kappa=0,$ we obtain a model with permanent vaccine. A similar hybrid approach combining continuous vaccination with discrete periodic pulses was studied in \cite{bolatova2024mathematical}.
					In the specific case where  $\kappa=0$ and $w\equiv0$, we retrieve the classical Markovian deterministic ODE SIRS epidemic model with pulse vaccination (see \cite{ma2024stability}) and, subsequently, the classical SIR models studied in \cite{gao2006analysis, gao2007analysis, jin2008pulse, stone2000theoretical}.
				\end{remark}
				
				%
				%
				%
				\section{Long time behavior}\label{sec-endemic-eq}
				
				In this section we will focus on studying the longtime behavior of the system equations~\eqref{eq-inf-F-sf-S}-\eqref{eq-inf-F-sf-F} given by the FLLN in Theorem~\ref{Th:FLLN} under Assumption~\ref{AS-lambda} and~\ref{ASS:lbd-g}. We start to recall the basic reproduction number \[R_0=\int_{0}^{\infty}\bar{\lambda}(s)ds,\]
				which represents the average number of individuals infected by an infectious individual in a fully susceptible population.  
				
				We make the following Assumption on the random infectivity functions.
				\begin{assumption}\label{ass-lbda-gamma}
					\leavevmode
					\begin{enumerate}[label=\textit{(\roman*)},ref=\textit{(\roman*)}]
						\item\label{ass-lbda-gamma-i} The randoms variables $\eta_0$ and $\eta$ defined in \eqref{def:eta} are integrables.
						\item \label{ass-lbda-gamma-ii} The map $t\mapsto\E\left[\lambda_0^I(t)\right]$ is integrable on $\R_+$.
					\end{enumerate}
				\end{assumption}
				Under Assumption~\ref{AS-lambda},~\ref{ASS:lbd-g} and~\ref{ass-lbda-gamma}\ref{ass-lbda-gamma-i} , as $\lambda^I_0(t)\leq\lambda_*$ and $\lambda^I(t)\leq\lambda_*$, it follows that, 
				\[\lim_{t\to\infty}\E\left[\lambda^I_0(t)\right]=0\text{ and }\lim_{t\to\infty}\E\left[\lambda^I(t)\right]=0.\]
				We set
				\[\gamma_*^I=\sup_{t\geq0}\gamma^I(t)\leq1,\quad\gamma_*^V=\sup_{t\geq0}\gamma^V(t)\leq1.\]
				We first establish the following result in Section~\ref{sec-LT-th-1}, which demonstrates that the disease-free steady state is globally asymptotically stable.
				\begin{theorem}\label{Th-LT-free}
					Under Assumption~\ref{AS-lambda},~\ref{ASS:lbd-g} and ~\ref{ass-lbda-gamma},
					\begin{enumerate}[label=\textit{(\roman*)},ref=\textit{(\roman*)}]
						\item \label{Th-lbda-gamma-i} If $R_0<1,$ then $\fF(t)\to0$ as $t\to\infty$.
						\item\label{Th-lbda-gamma-ii} Moreover, if $R_0<\E\left[\frac{1}{\gamma_*^I}\right],$ and $\gamma^V\equiv0,$ then $\fF(t)\to0$ as $t\to\infty.$
					\end{enumerate}
				\end{theorem}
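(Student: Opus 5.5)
Under Assumption~\ref{ASS:lbd-g}, the force of infection satisfies the renewal-type equation \eqref{eq-inf-F-sf-F-no-reinf}:
\[\fF(t)=\bar I(0)\bar\lambda_0(t)+\int_0^t\bar\lambda(t-s)\fS(s)\fF(s)ds.\]
The plan is a standard Volterra/renewal argument, following \cite{forien-Zotsa2022stochastic}.

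For part \ref{Th-lbda-gamma-i}, we use $\fS\le 1$ (Remark~\ref{RQ-bound-phi}) and $\fF\ge0$ to get
\[\fF(t)\le \bar I(0)\bar\lambda_0(t)+\int_0^t\bar\lambda(t-s)\fF(s)ds.\]
Set $L=\limsup_{t\to\infty}\fF(t)$, which is finite since $\fF\le\lambda_*$. Fix $T$ large, with $\sup_{s\ge T}\fF(s)\le L+\epsilon$. We split the convolution at $T$:
\[\int_0^T\bar\lambda(t-s)\fF(s)ds\le\lambda_*\int_{t-T}^{t}\bar\lambda(u)du,\]
which tends to $0$ as $t\to\infty$ because $\bar\lambda$ is integrable ($\bar\lambda\le\lambda_*\P(\eta>\cdot)$ and $\eta$ is integrable). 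The remaining piece is at most $(L+\epsilon)R_0$. Since $\bar\lambda_0(t)\to0$, we obtain $L\le R_0(L+\epsilon)$, hence $L\le R_0L$. As $R_0<1$, this forces $L=0$.

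For part \ref{Th-lbda-gamma-ii}, the factor $\fS$ must be kept. The idea is to bound the susceptibility contributed by each individual's history by that individual's $\gamma^I_*$, and to exploit the fact that susceptibility is only "regenerated" through infection, because $\gamma^V\equiv0$ means vaccination gives permanent immunity. From \eqref{eq-inf-F-sf-S-no-reinf} with $\gamma^V=0$, we drop the factors $e^{-\nu(\cdot)}\le1$ and obtain
\[\fS(t)\le \E\Big[\gamma_0^I(t)e^{-\int_0^t\gamma_0^I\fF}\Big]+\int_0^t\E\Big[\gamma^I(t-s)e^{-\int_s^t\gamma^I(r-s)\fF(r)dr}\Big]\fS(s)\fF(s)ds.\]
The key inequality, as in \cite{forien-Zotsa2022stochastic}, is
\[\gamma^I(t-s)e^{-\int_s^t\gamma^I(r-s)\fF(r)dr}\le \gamma^I_*e^{-\int_s^t\gamma^I(r-s)\fF(r)dr}.\]
We combine it with the mass-type identity $\frac{d}{dt}\big(-e^{-\int\gamma\fF}\big)=\gamma\fF e^{-\int\gamma\fF}$. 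This lets us relate $\int\fS\fF$ over long intervals to the number of reinfections.

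The route is then as follows. Since $\fF$ is bounded, the cumulative incidence $\int_0^t\fS\fF$ is a measure of new infections. We argue by contradiction: suppose $L=\limsup\fF>0$. Applying the limsup argument of the first part to the renewal equation gives
\[L\le R_0\limsup_{t\to\infty}\fS(t)\,L\]
in an averaged sense, which requires controlling $\fS$ along the times where $\fF$ is near $L$. One then needs the bound $\limsup\fS\le 1/\E[1/\gamma^I_*]$ whenever $\fF$ does not vanish. This is obtained from the Jensen/harmonic-mean argument of \cite{forien-Zotsa2022stochastic}. Under a positive force of infection, each individual's susceptibility is spent according to its own $\gamma^I_*$, so the stationary susceptibility-weighted incidence is at most the harmonic-mean quantity. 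The factors $e^{-\nu}$ and vaccination with $\gamma^V\equiv0$ only decrease $\fS$, so that bound transfers. It yields $L\le \frac{R_0}{\E[1/\gamma^I_*]}L<L$, a contradiction.

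The main obstacle is this second step: making rigorous the harmonic-mean bound on $\fS$ along subsequences where $\fF$ is near its limsup, given that $\fF$ need not converge. I would first try to adapt directly the proof in \cite{forien-Zotsa2022stochastic}. The vaccination terms are nonnegative contributions that disappear (since $\gamma^V=0$) or are multiplied by $e^{-\nu}\le1$, so monotonicity arguments should carry over with $\fS$ replaced by an upper-bounding solution without vaccination.
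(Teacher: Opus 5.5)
Your part (i) is correct. Splitting the convolution at a large $T$ to get $L\le R_0(L+\epsilon)$ is a valid alternative to the paper's argument, which integrates \eqref{eq-inf-F-sf-F-no-reinf} to get $\int_0^t\fF\le(1-R_0)^{-1}\int_0^t\E[\lambda_0^I(u)]\,du$ and then concludes by dominated convergence. The paper's route additionally gives $\int_0^\infty\fF<\infty$, which Remark~\ref{Rk-LT} and Corollary~\ref{coro-endemic-Th-d-free} rely on.

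Part (ii), however, is not a proof, and the route you sketch would not close. The target you set, $\limsup_{t}\fS(t)\le 1/\E[1/\gamma_*^I]$ (or the same along times where $\fF$ is near $L$), is not something the equations provide pointwise. After an epidemic wave, the freshly recovered carry fresh, unselected draws of $\gamma^I$, so $\fS$ can approach an arithmetic mean of $\gamma_*^I$, which by Jensen exceeds the harmonic mean. The harmonic mean only controls the incidence-weighted average $\int_0^t\fS\fF\big/\int_0^t\fF$. The contradiction hypothesis therefore has to be $\int_0^\infty\fF=\infty$, not $\limsup\fF>0$. Under that hypothesis, integrating \eqref{eq-inf-F-sf-F-no-reinf} and using Assumption~\ref{ass-lbda-gamma} gives $\int_0^t\fS\fF\big/\int_0^t\fF\to 1/R_0$. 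Once the contradiction is reached, you get $\int_0^\infty\fS\fF<\infty$, and dominated convergence in the renewal equation gives $\fF\to0$.

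The missing ingredient is the matching upper bound on this ratio, and it comes from the mass identity of Proposition~\ref{Lem:mass-conserv}, not from the $\fS$-equation. Your key inequality bounds the prefactor $\gamma^I(t-s)\le\gamma^I_*$ while keeping $e^{-\int_s^t\gamma^I(r-s)\fF(r)dr}$; integrated against $\fF$ this produces no factor $1/\gamma^I_*$ and points the wrong way. The paper uses $\gamma^I\le\gamma^I_*$ inside the exponential instead: $e^{-\int_s^u\gamma^I(r-s)\fF(r)dr}\ge e^{-\gamma^I_*\int_s^u\fF(r)dr}$. Making this replacement in the infection-cohort terms of the mass identity only lowers them, and with $\gamma^V\equiv0$ the vaccinated terms exactly cancel $\int_0^t\mathcal K(\fS,\fF)(v)\,\nu(dv)$, so those cohort terms sum to at most $1$. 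Multiplying by $\fF(u)$, integrating in $u$, and integrating by parts with $\int_s^t\fF(u)e^{-\gamma_*^I\int_s^u\fF(r)dr}du=\big(1-e^{-\gamma_*^I\int_s^t\fF(r)dr}\big)/\gamma_*^I$ yields
\[
\E\Big[\frac{1}{\gamma_*^I}\indic{\gamma_*^I\ge\epsilon}\Big]\int_0^t\fS(s)\fF(s)\,ds\le\int_0^t\fF(u)\,du+\frac{1}{\epsilon}.
\]
The boundary terms and the $\nu(du)$-terms produced by the factors $e^{-\nu((s+\eta,u])}$ are controlled by the same identity. Dividing by $\int_0^t\fF$ and letting $\epsilon\downarrow0$ contradicts the limit $1/R_0$ when $R_0<\E[1/\gamma^I_*]$. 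If $\P(\gamma^I_*=0)>0$, the identity directly gives $\int_0^\infty\fS\fF\le1/\P(\gamma^I_*=0)$.

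Finally, your idea of replacing $\fS$ by an upper-bounding vaccination-free solution is unjustified. The system is nonlinear and not monotone: vaccination lowers $\fF$ and hence reinfection, which raises the susceptibility of the non-vaccinated, and no comparison principle is available. The paper needs none, since the $e^{-\nu}$ factors are simply carried through the integration by parts.
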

				While Theorem~\ref{Th-LT-free}~\ref{Th-lbda-gamma-ii} suggests that the constant one is not necessarily optimal, Example~\ref{Ex-long} demonstrates that this threshold is indeed optimal for some non-Markovian models, and in particular, for the classical ODE model.
				\begin{remark}\label{Rk-LT}
					From the proof of Theorem~\ref{Th-LT-free} in Section~\ref{sec-LT-th-1}, we note that, if $R_0<1$ (or $R_0<\E\left[\frac{1}{\gamma_*^I}\right],$ and $\gamma^V\equiv0$), we have
					\[\int_{0}^{\infty}\fF(s)ds<\infty.\]
				\end{remark}
				We now define an infection-free solution as one for which $\overline{I}(t)=0$ for all $t\geq0,$ which is equivalent to $\forall t\geq0,\,\fF(t)=0.$ In this case, the average susceptibility is given by:
				\begin{equation}\label{eq-inf-F-sf-free}
					\fS_e(t)=\E\left[\gamma_{0}^I(t)e^{-\nu((\eta_{0},t])}\indic{t>\eta_{0}}\right]+\int_{0}^{t}e^{-\nu((v,t])}\E\left[\gamma^V(t-v)\right]\nu(dv)						
				\end{equation}
				
				By Dominated convergence Theorem, Remark~\ref{Rk-LT} and expression~\eqref{eq-inf-F-sf-S-no-reinf}, we easily derive the following result:
				\begin{coro}\label{coro-endemic-Th-d-free}
					Assume that Assumptions~\ref{AS-lambda}, \ref{ASS:lbd-g}, and \ref{ass-lbda-gamma} hold.
					\begin{enumerate}
						\item \label{cor-V-p} If $\nu([0, t]) \to \infty$ as $t \to \infty$, $\gamma^V \equiv 0$, and either $R_0 < 1$ or $R_0 < \mathbb{E}[1/\gamma_*^I]$, then $\fS(t) \to 0$ as $t \to \infty$.
						\item \label{cor-V-Up} If $\nu(\mathbb{R}_+) < \infty$, $R_0 < 1$, and both $\mathbb{E}[\gamma^I_0(t)] \to 0$ and $\mathbb{E}[\gamma^V(t)] \to 0$ as $t \to \infty$, then $\fS(t) \to 0$ as $t \to \infty$.
					\end{enumerate}
				\end{coro}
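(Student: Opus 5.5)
We read the conclusion "$\fS(t)\to 0$" as concerning the representation \eqref{eq-inf-F-sf-S-no-reinf}. The plan is to show that each of its three terms tends to zero, using only $\int_0^\infty \fF(s)\,ds<\infty$ (Remark~\ref{Rk-LT}), the bounds $0\le \gamma\le 1$ and $\fS\le 1$, and dominated convergence. Under either alternative hypothesis of part (1), and under $R_0<1$ in part (2), Theorem~\ref{Th-LT-free} and Remark~\ref{Rk-LT} give both $\fF(t)\to 0$ and $\fF\in L^1(\R_+)$. In every integral term the integrand carries the factor $\fS(s)\fF(s)\le \fF(s)$, so the whole integral is dominated by an $L^1$ function of $s$.

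\textbf{Part (1).} Here $\gamma^V\equiv 0$, so the third term of \eqref{eq-inf-F-sf-S-no-reinf} vanishes identically. The first term is bounded by $\E\big[e^{-\nu((\eta_0,t])}\big]$. Since $\eta_0<\infty$ a.s. by Assumption~\ref{ass-lbda-gamma}, and $\nu([0,t])\to\infty$ while $\nu([0,\eta_0])<\infty$ ($\nu$ is sigma-finite and, we assume, locally finite), we get $\nu((\eta_0,t])\to\infty$ a.s. Dominated convergence (bound $1$) then sends this term to $0$.

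For the second term, write it as $\int_0^\infty \indic{s\le t}\,h_t(s)\,\fS(s)\fF(s)\,ds$, where
\[
h_t(s)=\E\big[\gamma^I(t-s)e^{-\nu((s+\eta,t])}\indic{t-s>\eta}\big]\le 1 .
\]
For fixed $s$, as $t\to\infty$ we have $\nu((s+\eta,t])\to\infty$ a.s., so $h_t(s)\to 0$. The integrand is dominated by $\fF\in L^1$, so dominated convergence gives the limit $0$.

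\textbf{Part (2).} Now $\nu(\R_+)<\infty$, so the factors $e^{-\nu(\cdot)}$ no longer decay and the decay must come from the susceptibility functions.
\begin{itemize}
\item The first term is bounded by $\E[\gamma^I_0(t)]\to 0$.
\item The second term is at most $\int_0^t \E[\gamma^I(t-s)]\fF(s)\,ds$. Here we need $\E[\gamma^I(u)]\to 0$ as $u\to\infty$, which is the natural companion of the stated hypotheses; we would interpret the statement's $\E[\gamma_0^I(t)]\to 0$ as covering $\gamma^I$ as well. Given this, dominated convergence in $s$ with dominating function $\fF\in L^1$ gives the limit $0$.
\item The third term is at most $\int_0^t \E[\gamma^V(t-v)]\,\nu(dv)$, using $1-\bar I\le 1$. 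This tends to $0$ by dominated convergence with respect to the finite measure $\nu$, with bound $1$ and pointwise limit $\E[\gamma^V(t-v)]\to 0$ for each fixed $v$.
\end{itemize}

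\textbf{Main obstacle.} The delicate point is justifying pointwise decay in $t$ of the kernel terms under weak hypotheses. In part (1) this means $\nu((s+\eta,t])\to\infty$, which needs $\eta<\infty$ a.s. and local finiteness of $\nu$; if $\nu$ had infinite mass near a point, $e^{-\nu}$ would already be $0$ and the argument only becomes easier. In part (2) it is the decay of $\E[\gamma^I]$, which must be assumed or inferred from the hypotheses as discussed above. Everything else is a direct application of dominated convergence, with integrability supplied by Remark~\ref{Rk-LT}.
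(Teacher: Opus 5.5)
Your proof is correct and follows the paper's intended argument. The paper justifies the corollary only by ``dominated convergence, Remark~\ref{Rk-LT} and \eqref{eq-inf-F-sf-S-no-reinf}'', and your term-by-term treatment supplies the details. Local finiteness of $\nu$, which you need for $\nu((s+\eta,t])\to\infty$, is tacitly part of the model, since the paper uses $\nu([0,T])<\infty$. Your aside that infinite local mass would only make things easier is not accurate, but it plays no role in the argument.

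Your caveat in part (2) points to a genuine defect of the statement, not of your proof. Because $e^{-\nu((s+\eta,t])}\ge e^{-\nu(\R_+)}$ and $\gamma^I\le 1$, the second term of \eqref{eq-inf-F-sf-S-no-reinf} is at least $e^{-\nu(\R_+)}e^{-\int_0^\infty\fF(r)\,dr}\int_0^t\E[\gamma^I(t-s)\indic{t-s>\eta}]\fS(s)\fF(s)\,ds$. For instance, with $\gamma^I(u)=\indic{u>\eta}$ this has a positive limit whenever $\int_0^\infty\fS(s)\fF(s)\,ds>0$. So the hypothesis $\E[\gamma^I(t)]\to0$ must be added to part (2).
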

				This result implies that, asymptotically, the solution $(\fS(t), \fF(t))$ converges to the disease-free trajectory $(\fS_e(t), 0)$. This convergence is established by Corollary~\ref{coro-endemic-Th-d-free}, specifically under the case where $\gamma^V\equiv0$ (Part~\ref{cor-V-p}) and the case where $\gamma^V$ is not identical null (Part~\ref{cor-V-Up}). In this case, the pair $(\fS_e(t), 0)$ is referred to in the literature as a globally attractive solution. This corollary indicates that, asymptotically, the average susceptibility tends to zero; as a result, the population becomes fully immunized.

				\begin{remark}
					This result generalizes the one in~\cite{gao2007analysis}, where the authors show that, for the classical ODE model with pulse vaccination~\eqref{cl-ode}, the disease-free solution is globally attractive when $R_0=\beta\E\left[\eta\right]<1.$ 
				\end{remark}
				
				We now turn to the study of the persistence of the disease. To this end, we adopt the following assumptions from~\cite{forien-Zotsa2022stochastic}.
				\begin{assumption}\label{ASS2}
					\leavevmode
					\begin{itemize}
						\item We set $\gamma_\ast=\min(\gamma_*^I,\gamma^V_*)$ and assume that $\gamma_*$ is deterministic and positive.
						\item The functions $\gamma^I_0$, $\gamma^I$ and $\gamma^V$ are non-decreasing.
					\end{itemize} 
				\end{assumption}
				Note since the functions $\gamma^I_0$, $\gamma^I$ and $\gamma^V$ are non-decreasing, for any $\delta \in (0,1)$,  there exists a deterministic $t_\delta>0$  such that 
				\begin{equation}
					\gamma_0^I(t_\delta)\wedge\gamma^I(t_\delta)\wedge\gamma^V(t_\delta)\geq(1-\delta)\gamma_\ast\text{ almost surely.}
				\end{equation}
				\begin{assumption}\label{ASS3}
					There exists a positive decreasing function $h$ such that $h(0)=1$ and for all $s,t\in\R_+,\;\overline{\lambda}(s+t)\geq h(s)\overline{\lambda}(t)$. The same holds for $\overline{\lambda}_0$. In addition,  $\overline{\lambda}_0$ is continuous and $\overline{\lambda}$ is of bounded total variation.
				\end{assumption}
				\begin{theorem}\label{end-th-persistence}
					Under Assumption~\ref{AS-lambda},~\ref{ASS:lbd-g},~\ref{ass-lbda-gamma},~\ref{ASS2},~\ref{ASS3}, if $R_0>\E\left[\frac{1}{\gamma_*}\right],\,\fF(0)>0,$ then there exists $c>0$ such that,
					\[\lim_{t\to\infty}\inf\fF(t)>c.\]
				\end{theorem}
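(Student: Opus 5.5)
Under Assumption~\ref{ASS:lbd-g} the force of infection solves the renewal equation \eqref{eq-inf-F-sf-F-no-reinf}, $\fF(t)=\bar I(0)\bar\lambda_0(t)+\int_0^t\bar\lambda(t-s)\fS(s)\fF(s)ds$. As in the persistence proof of \cite{forien-Zotsa2022stochastic}, the plan is to show that if $\fF$ stays small over a long time window, then $\fS$ climbs above a level $\bar s$ with $R_0\bar s>1$. This makes the renewal equation supercritical, which contradicts $\fF$ staying small. One reading is needed up front: the condition $R_0>\E[1/\gamma_*]$ only makes sense if $\gamma_*$ stays bounded away from vaccination's permanent sink. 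I therefore read Assumption~\ref{ASS2} as giving $\gamma^V_*\geq\gamma_*>0$, so vaccinated individuals also regain susceptibility, and I use that $\gamma_*$ is deterministic, so $\E[1/\gamma_*]=1/\gamma_*$.

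\textbf{Step 1 (lower bound on $\fS$ when $\fF$ is small).} Fix $\delta>0$, with $t_\delta$ as in Assumption~\ref{ASS2}. Suppose $\fF\le\epsilon$ on $[T,T+L]$. By the mass conservation of Proposition~\ref{Lem:mass-conserv}, every individual at time $t\in[T+t_\delta+M,T+L]$ falls into one of three groups: still infected, recovered from an infection at time $s<t-t_\delta-\eta$, or vaccinated at a time $v<t-t_\delta$.
\begin{itemize}
\item[$\cdot$] The recovered group has susceptibility $\ge(1-\delta)\gamma_*$ by monotonicity of $\gamma^I,\gamma^I_0$.
\item[$\cdot$] The vaccinated group has susceptibility $\ge(1-\delta)\gamma_*$ by monotonicity of $\gamma^V$.
\item[$\cdot$] Each exponential factor $\exp(-\int\gamma\fF)$ is at least $e^{-\epsilon t_\delta}$ over the last $t_\delta$ units of time.
\end{itemize}
The proportion that is infected or was infected/vaccinated within the last $t_\delta+M$ units is controlled by $\bar I(t)=\bar I(0)F^c_{I,0}(t)+\int_0^tF^c_I(t-s)\fF\fS\,ds$. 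Using integrability of $\eta$ (Assumption~\ref{ass-lbda-gamma}), this is $\le C\epsilon(t_\delta+\E\eta)+o_M(1)$. Concretely, I would compare \eqref{eq-inf-F-sf-S-no-reinf} term by term with the mass identity. For the vaccination term, I would bound $(1-\bar I(v))$ below by $1-C\epsilon$ and use $e^{-\nu((v,t])}+\int_v^te^{-\nu((v,a])}\nu(da)=1$ so that the vaccine branches recombine. The conclusion is $\fS(t)\ge(1-\delta)\gamma_*(1-C'\epsilon)-o_M(1)$ for $t\in[T+T_0,T+L]$. Choosing $\delta,\epsilon$ small gives $R_0\inf\fS>1+\kappa$ there.

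\textbf{Step 2 (renewal contradiction).} On the window the equation gives $\fF(t)\ge\int_{T+T_0}^t\bar\lambda(t-s)(R_0^{-1}(1+\kappa))\fF(s)ds$. Take Laplace transforms with exponent $\rho>0$ satisfying $(1+\kappa)R_0^{-1}\int_0^\infty e^{-\rho s}\bar\lambda(s)ds>1$. Assumption~\ref{ASS3} ($\bar\lambda(s+t)\ge h(s)\bar\lambda(t)$) shows that $\fF$ cannot vanish on the window and that mass present at $T+T_0$ propagates. Together these force exponential growth of $\fF$ over the window, contradicting $\fF\le\epsilon$ once $L$ is large. Hence $\fF$ cannot remain below $\epsilon$ on long intervals. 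Positivity for all times comes from $\fF(0)>0$, continuity of $\bar\lambda_0$, and the $h$-bound.

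\textbf{Step 3 (from weak to uniform persistence).} From Step 2, $\limsup\fF\ge\epsilon$. To get $\liminf\fF>c$, I would argue that $\fF$ cannot drop from $\epsilon$ to an arbitrarily small value $c$ in bounded time. The bounded variation of $\bar\lambda$ gives equicontinuity-type control, and the bound $\fF(t+s)\ge h(s)\int\bar\lambda(t-u)\fS\fF\,du$ limits how fast it decays. Combined with Step 2, a small excursion below $c$ must be followed by a return above $\epsilon$ within time $L$, during which $\fF\ge h(L)\cdot(\text{level})$. This yields a uniform lower bound $c$.

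The main obstacle I expect is Step 1. Pulse vaccination introduces the non-absolutely-continuous measure $\nu$, and the terms $e^{-\nu((s+\eta,t])}$ must be shown to redistribute mass into the vaccine branch, not lose it. This requires using Proposition~\ref{Lem:mass-conserv} carefully rather than the simple one-line bound available when $\nu=0$. Step 3 is also delicate, but it follows \cite{forien-Zotsa2022stochastic} nearly verbatim.
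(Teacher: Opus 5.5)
The gap is in your Step 1, exactly where vaccination enters. Your three groups do not cover the whole population. They leave out individuals whose most recent event is a vaccination at some $v\in(t-t_\delta,t]$, and you treat this mass as if it were controlled by $\bar I(t)$ and by $\fF\le\epsilon$. It is not. When $\fF$ is small this mass is roughly $(1-\bar I(t))\bigl(1-e^{-\nu((t-t_\delta,t])}\bigr)$, which is set by $\nu$ alone. These individuals have susceptibility $\gamma^V(t-v)$, which may be far below $(1-\delta)\gamma_*$ (even $0$) for $t-v<t_\delta$. Recombining the vaccine branches through $e^{-\nu((v,t])}+\int_v^te^{-\nu((v,a])}\nu(da)=1$ only shows that this mass is not lost, not that it is susceptible. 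Regaining susceptibility eventually does not help if individuals are revaccinated before they regain it. Concretely, take $\nu(dv)=w\,dv$ and $\gamma^V=\indic{[\eta^V,\infty)}$ with $\eta^V$ deterministic, so $\gamma^V_*=1$ and $t_\delta=\eta^V$ works for $\gamma^V$. For the infection-free solution $\fF\equiv0$, \eqref{eq-inf-F-sf-free} gives $\fS(t)\to e^{-w\eta^V}$, which is arbitrarily small. Step 1 would instead give $\fS\ge(1-\delta)\gamma_*-o(1)$.

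More careful bookkeeping cannot fix this. The mass-conservation argument yields at best $\fS(t)\gtrsim(1-\delta)\gamma_*e^{-\nu((t-t_\delta,t])}(1-C\epsilon)$. To make the renewal step supercritical you therefore need an extra hypothesis limiting the vaccination pressure, for instance $\limsup_{t\to\infty}\nu((t-t_\delta,t])<\log\bigl((1-\delta)\gamma_*R_0\bigr)$ for some $\delta$ (automatic if $\nu(\R_+)<\infty$).

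Without such a condition the conclusion itself fails. Take the ODE model of Example~\ref{Ex-Model-SIRS} with constant $w$ and no pulses; Example~\ref{Ex-long} presents it as covered by the theorem with $\gamma_*=1$. There $\dot{\bar S}\le\theta+\kappa-w\bar S$, hence $\limsup_{t\to\infty}\bar S(t)\le(\theta+\kappa)/w$. So $\fF=\beta\bar I\to0$ as soon as $R_0(\theta+\kappa)<w$, even when $R_0>1$.

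The paper's own proof does not get around this. Its lower bound on $\fS$ over $[t_1+s_1,t_1+s_1+s_2]$ carries the constant $c_3$, which contains the factor $e^{-\nu((t_1,t_1+s_1+s_2])}$. Then \eqref{inR_0} needs $c_3$ close to $1$, which nothing guarantees. Moreover $c_3$ depends on $t_1$, hence on $c_2$, which \eqref{inR_0} was supposed to fix.

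Apart from this, your Steps 2--3 follow the paper's renewal and comparison mechanism. Note one limitation: your Laplace-transform argument on an infinite window only shows that $\fF$ cannot stay below $\epsilon$ forever. For the uniform lower bound you need a finite-horizon comparison with the solution $x$ of \eqref{V1}, as the paper does.
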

				We refer to section~\ref{sec-end-th-persistence} for the proof.
				
				We note that Theorem~\ref{end-th-persistence} still holds when $\gamma^V\equiv0$. The same proof work by using the non-decreasing properties of $\gamma^I_0$ and $\gamma^I$, which ensures that for any $\delta\in(0,1)$ there exists $t_\delta$ such that
				\begin{equation*}
					\gamma_0^I(t_\delta)\wedge\gamma^I(t_\delta)\geq(1-\delta)\gamma_\ast\text{ almost surely.}
				\end{equation*} 
				
				\begin{remark}
					Assumption~\ref{ASS2} ensures that after some time the average susceptibility returns above $\frac{1}{R_0}$ if there are not too many re-infections and Assumption~\ref{ASS3} ensures that the force of infection does not decrease too rapidly.
				\end{remark}
				\begin{example}\label{Ex-long}
					For the non-Markovian model presented in Section~\ref{sec-FLLN-simply}, we have $\gamma^V_* = \gamma^I_* = 1$ and $R_0=\beta\int_0^\infty F^c_I(s)ds$. We then deduce from Theorem~\ref{Th-LT-free} that when $R_0 < 1$, the disease-free steady state is globally asymptotically stable. Moreover, since the functions $\gamma^I_0$, $\gamma^I$, and $\gamma^V$ are non-decreasing, the function $h$ in Assumption~\ref{ASS3} is equal to one, the cumulative function $F_I^c$ has bounded variation, and assuming that $\eta_0$ admits a density or that $F^c_{I,0}$ is continuous, we derive from Theorem~\ref{end-th-persistence} that, for the SIRS model introduced in Section~\ref{sec-FLLN-simply}, the disease persists whenever $R_0 > 1$. Specifically, in the case of our ODE model presented in Example~\ref{Ex-Model-SIRS}, if $R_0 < 1$, the disease-free steady state is asymptotically stable, whereas the disease persists if $R_0 > 1$.
				\end{example}

				\section{Results on stochastic integrals with respect to Poisson measure}\label{sec-poiss-integral}
				We introduce the following easier result.
				\begin{lemma}\label{Law-Tr}
					Let $Q$ be a Poisson random measure on $\R_+$ of intensity $\nu(du)$. For all $r\geq0,$ 
						We set 
						\[\mathbb{T}_r=\inf\{r< s\leq t,\,Q((r,s])=1\}.\]
						Then the Cumulative distribution function $\mathbb{F}_r$ of $\mathbb{T}_r$ is given by: 
						\begin{equation*}
							\mathbb{F}_r(s)=\left\{\begin{aligned}
								&0&\mbox{ if }s\leq r\\&1-\exp\left(-\nu((r,s])\right)&\mbox{ if }r<s\leq t\\&1&\mbox{ if }s> t.
							\end{aligned}\right.
						\end{equation*}
				\end{lemma}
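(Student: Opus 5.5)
The plan is to compute $\P(\mathbb{T}_r\le s)$ directly from the defining properties of the Poisson random measure $Q$ with intensity $\nu$. By definition, $\mathbb{T}_r$ is the first atom of $Q$ in $(r,t]$, with the convention that $\mathbb{T}_r$ is set beyond $t$ (the infimum of the empty set, which we read as some value $>t$, e.g.\ $+\infty$) when $Q((r,t])=0$. The whole statement then reduces to identifying, for each $s$, the event $\{\mathbb{T}_r\le s\}$ with an event about a Poisson count.

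\textbf{Step 1: the range $r<s\le t$.} Here $\{\mathbb{T}_r\le s\}=\{Q((r,s])\ge1\}$. Indeed, if $Q((r,s])\ge 1$, then some $u\in(r,s]$ has $Q((r,u])=1$: take $u$ to be the first atom, which exists because $\nu$ is locally finite on the relevant bounded interval. Conversely, if $Q((r,s])=0$, then no $u\le s$ satisfies $Q((r,u])=1$. Since $Q((r,s])$ is Poisson with parameter $\nu((r,s])$, this gives
\[\P(\mathbb{T}_r\le s)=1-\P(Q((r,s])=0)=1-\exp(-\nu((r,s])).\]

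\textbf{Step 2: the two boundary ranges.}
\begin{itemize}
\item For $s\le r$, the event $\{\mathbb{T}_r\le s\}$ is empty, because the infimum is taken over $s'>r$. Hence $\mathbb{F}_r(s)=0$.
\item For $s>t$, the formula requires $\mathbb{F}_r(s)=1$. This holds under the convention that $\mathbb{T}_r$ is truncated at $t$ (set equal to $t$, or placed in $(t,s)$) on the event $\{Q((r,t])=0\}$. The statement then says that $\mathbb{T}_r$ carries an atom of mass $e^{-\nu((r,t])}$ just beyond $t$.
\end{itemize}
I would state this convention explicitly, since it is implicit in the lemma.

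\textbf{Main obstacle.} The only point needing care is the treatment of the case where $\nu$ has atoms, as in the pulse vaccination setting with $\nu_2=\sum_n\alpha_n\delta_{\tau_n}$. In that case $Q$ may put mass $\ge2$ at a single point, so the condition ``$Q((r,s])=1$'' could be skipped over. I would therefore interpret the infimum as the first time $Q((r,\cdot])\ge1$, i.e.\ the first jump time of $\mathcal{N}$ after $r$; this is the intended meaning. With this reading Step 1 is unaffected, and the formula holds for arbitrary $\sigma$-finite $\nu$ that is finite on bounded intervals.
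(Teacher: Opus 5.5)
The paper states this lemma without proof. Your Step 1 is exactly the standard argument it takes for granted: identify $\{\mathbb{T}_r\le s\}$ with $\{Q((r,s])\ge1\}$ and use $\P(Q((r,s])=0)=e^{-\nu((r,s])}$. Reading the defining condition as $Q((r,\cdot])\ge 1$ to handle atoms of $\nu$ is also the right call.

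\textbf{The gap is your treatment of $s>t$.} You say that $\mathbb{F}_r(s)=1$ there follows from the convention that $\mathbb{T}_r$ is ``set equal to $t$, or placed in $(t,s)$'' on $\{Q((r,t])=0\}$. Neither option is consistent with the rest of the statement.
\begin{itemize}
\item If $\mathbb{T}_r:=t$ on that event, then $\{\mathbb{T}_r\le t\}$ has probability one, so $\mathbb{F}_r(t)=1\neq 1-e^{-\nu((r,t])}$. Your Step 1 identity, and the middle line of the lemma, then fail at the endpoint $s=t$.
\item Placing $\mathbb{T}_r$ in $(t,s)$ for every $s>t$ is impossible, since $\bigcap_{s>t}(t,s)=\emptyset$.
\item If you take $\inf\emptyset=+\infty$, Step 1 holds up to $s=t$, but then $\mathbb{F}_r(s)=1-e^{-\nu((r,t])}$ for all $s>t$.
\end{itemize}
In fact no convention can work. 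When $\nu((r,t])<\infty$, the displayed function equals $1-e^{-\nu((r,t])}<1$ at $s=t$ and jumps to $1$ immediately to the right. It is therefore not right-continuous, so it is not the distribution function $s\mapsto\P(X\le s)$ of any random variable.

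\textbf{What to do instead.} Say explicitly that the statement needs correcting, and prove the version the paper actually uses. Set $\mathbb{T}_r:=\inf\{s>r:\,Q((r,s])\ge1\}\wedge t$. Your argument then gives
\begin{itemize}
\item $\mathbb{F}_r(s)=0$ for $s\le r$,
\item $\mathbb{F}_r(s)=1-e^{-\nu((r,s])}$ for $r<s<t$,
\item $\mathbb{F}_r(s)=1$ for $s\ge t$, with an atom $\P(\mathbb{T}_r=t)=e^{-\nu((r,t))}$ at $t$.
\end{itemize}
This truncation is exactly what Theorem~\ref{Lem-T0} needs: the identity $\int_\tau^t\varphi(r)\indic{Q((\tau,r])=0}dr=\int_\tau^{\mathbb{T}_\tau}\varphi(r)dr$ requires $\mathbb{T}_\tau\le t$. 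It also matches the mass $e^{-\nu((\tau,t])}$ at $t$ used in the Remark following the lemma; the two agree when $\nu(\{t\})=0$.

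The discrepancy is confined to the single point $s=t$, so it is a defect to fix in the statement rather than a substantive obstruction. As written, though, your Step 2 asserts something false.
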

				\begin{remark}
					Note that, if 
					\[H(x)=\indic{\R_+}(x),\text{ and }\delta(a)=\begin{cases}
						0&\mbox{ if }a\neq0\\1&\mbox{ if }a=0.
					\end{cases}\]
					We know that $H'(s)=\delta(s)$ and we derive that,
					\begin{equation*}
						\mathbb{F}_r(s)=(1-e^{-\nu((r,s])})(H(t-s)-H(r-s))+H(s-t)
					\end{equation*}
					Consequently,
					\begin{equation*}
						\begin{aligned}
							\mathbb{F}_r(ds)&=e^{-\nu((r,s])}(H(t-s)-H(r-s))\nu(ds)\\
							&\hspace{2cm}+(1-e^{-\nu((r,s])})(-\delta(t-s)+\delta(r-s))ds+\delta(s-t)ds.
						\end{aligned}
					\end{equation*}
					
					Then, for any differentiable function $\Psi:\R_+\to\R,$
					\begin{align*}
						\int_{\tau}^{t}\Psi(s)\mathbb{F}_\tau(ds)&=\int_{\tau}^{t}\Psi(s)e^{-\nu((\tau,s])}\nu(ds)+\Psi(t)e^{-\nu((\tau,t])}\\
						&=\Psi(\tau)+\int_{\tau}^{t}\Psi'(s)e^{-\nu((\tau,s])}ds.
					\end{align*}
					%
				\end{remark}

				We now establish the following two theorems, which provide a formula for computing the moment exponents of Poisson stochastic integrals. These theorems offer a more practical formula than those presented in \cite{breton2014factorial, decreusefond2014moment, privault2012moments}.
				\begin{theorem}\label{Lem-T0}
					Let $Q$ be a Poisson random measure on $\R_+$ of intensity $\nu(du)$. Let $\varphi:\R_+\to\R$ be a predictable process independent of $Q$,
					then for $\tau\geq0,$
					\begin{equation*}
						\E\left[\exp\left(-\int_{\tau}^{t} \varphi(r)\indic{Q((\tau,r])=0}dr\right)\right]=\E\left[\int_{\tau}^{t}\exp\left(-\int_{\tau}^{u}\varphi(r)dr\right)\mathbb{F}_\tau(du)\right].
					\end{equation*}
				\end{theorem}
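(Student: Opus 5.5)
Let $\mathbb{T}_\tau$ be the first jump time of $Q$ after $\tau$, as in Lemma~\ref{Law-Tr}, with the convention that $\mathbb{T}_\tau=t$ if $Q$ has no atom in $(\tau,t]$. The plan is to rewrite the indicator $\indic{Q((\tau,r])=0}$ through $\mathbb{T}_\tau$ and then integrate over its law.

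For $r\in(\tau,t]$ we have $Q((\tau,r])=0$ if and only if $r<\mathbb{T}_\tau$ when a jump occurs in $(\tau,t]$, and for all $r\le t$ otherwise. Up to the single point $r=\mathbb{T}_\tau$, which is Lebesgue-null, this gives
\[
\int_{\tau}^{t}\varphi(r)\indic{Q((\tau,r])=0}\,dr=\int_{\tau}^{\mathbb{T}_\tau}\varphi(r)\,dr .
\]
Hence
\[
\exp\Big(-\int_{\tau}^{t}\varphi(r)\indic{Q((\tau,r])=0}dr\Big)=\Psi_\varphi(\mathbb{T}_\tau),
\qquad\text{where }\ \Psi_\varphi(u):=\exp\Big(-\int_\tau^u\varphi(r)dr\Big).
\]
This step is deterministic given the path of $Q$.

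Next I condition on $\varphi$. Since $\varphi$ is independent of $Q$, the variable $\mathbb{T}_\tau$ is independent of $\varphi$, so Fubini applied to the product law gives
\[
\E\big[\Psi_\varphi(\mathbb{T}_\tau)\,\big|\,\varphi\big]=\int \Psi_\varphi(u)\,\mathbb{F}_\tau(du).
\]
By Lemma~\ref{Law-Tr}, $\mathbb{F}_\tau$ is supported on $(\tau,t]$. It has density part $e^{-\nu((\tau,u])}\nu(du)$ on $(\tau,t)$ and an atom of mass $e^{-\nu((\tau,t])}$ at $u=t$, which accounts for the event of no jump. Taking expectations over $\varphi$ then yields the claimed identity $\E\big[\int_\tau^t\Psi_\varphi(u)\,\mathbb{F}_\tau(du)\big]$, provided the integral $\int_\tau^t$ is read as including the endpoint atom at $t$.

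The measurability of $(\omega,u)\mapsto\Psi_\varphi(u)$, needed for Fubini, follows from predictability of $\varphi$, which gives joint measurability. Integrability is not an issue if $\varphi\ge0$, since then $\Psi\le1$. For general real $\varphi$ I would either assume local integrability with finite exponential moments, or argue by monotone convergence on the positive and negative parts.

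The main obstacle I expect is bookkeeping rather than depth. First, the convention at $\mathbb{T}_\tau=t$ must match the atom of $\mathbb{F}_\tau$ at $t$, which is exactly the no-jump probability. Second, if $\nu$ has atoms, the point $r=\mathbb{T}_\tau$ must still be negligible. It is, because the integral is in $dr$ (Lebesgue), so atoms of $\nu$ only affect the law of $\mathbb{T}_\tau$ and not the pathwise identity.
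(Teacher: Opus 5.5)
Your proposal is correct and follows the same route as the paper. You write the integral as $\int_\tau^{\mathbb{T}_\tau}\varphi(r)\,dr$, condition on $\varphi$, and use the independence of $\varphi$ and $Q$ to integrate $\exp(-\int_\tau^u\varphi(r)\,dr)$ against the law $\mathbb{F}_\tau$ of $\mathbb{T}_\tau$. Your explicit convention $\mathbb{T}_\tau=t$ on the no-jump event, which matches the atom of mass $e^{-\nu((\tau,t])}$ at $u=t$, is left implicit in the paper. Also, since the integrand is nonnegative, Tonelli already covers general real $\varphi$, so you need no extra moment assumptions.
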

				\begin{proof}
					From Lemma~\ref{Law-Tr}, we have,
					\begin{align*}
						\E\left[\exp\left(-\int_{\tau}^{t}\varphi(r)\indic{Q((\tau,r])=0}dr\right)\right]&=\E\left[\exp\left(-\int_{\tau}^{\mathbb{T}_\tau}\varphi(r)dr\right)\right]\\
						&=\E\left[\E\left[\exp\left(-\int_{\tau}^{\mathbb{T}_\tau}\varphi(r)dr\right)\big|\varphi\right]\right]\\
						&=\E\left[\int_{\tau}^{t}\exp\left(-\int_{\tau}^{u}\varphi(r)dr\right)\mathbb{F}_\tau(du)\right].
					\end{align*}
				\end{proof}
				The following theorem give the characteristic function for Poisson integral with non-predictable integrand.
				\begin{theorem}\label{Th-char}
					Let $Q$ be a Poisson random measure on $\R_+$ of intensity $\nu(du)$. Let $h:\R_+\times\R_+\to\R,$ be a predictable function. Then
					\begin{equation*}
						\E\left[\exp\left(-\int_{0}^{t}\int_{r}^{t}\indic{Q((r,s])=0}h(r,s)dsQ(dr)\right)\right]=\E\left[\exp\left(-\int_{0}^{t}\int_{r}^{t}(1-e^{-\int_{r}^{u}h(r,s)ds})\mathbb{F}_r(du)\nu(dr)\right)\right].
					\end{equation*}
				\end{theorem}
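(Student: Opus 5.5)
The plan is to rewrite the non-predictable Poisson integral as a sum over the atoms of $Q$, with each atom carrying a ``mark'' equal to the next atom. I would then evaluate the exponential moment with the Laplace functional of a marked Poisson measure, taking the mark kernel to be the law $\mathbb{F}_r$ of Lemma~\ref{Law-Tr}. Throughout, $h$ is taken independent of $Q$, so I first condition on $h$ (as in the proof of Theorem~\ref{Lem-T0}) and treat $h$ as deterministic; the outer expectation is restored at the end.

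\emph{Step 1 (pathwise rewriting).} Let $r_1<r_2<\dots<r_n$ be the atoms of $Q$ in $(0,t]$, and set $\mathbb{T}_{r}=\inf\{s\in(r,t]:Q((r,s])=1\}\wedge t$, as in Lemma~\ref{Law-Tr}. For each atom $r_i$ we have $\indic{Q((r_i,s])=0}=\indic{s<\mathbb{T}_{r_i}}$ for $s\in(r_i,t]$, where $\mathbb{T}_{r_i}=r_{i+1}$ if $i<n$ and $\mathbb{T}_{r_n}=t$. Writing $g(r,u)=\int_r^u h(r,s)\,ds$, this gives
\[
\int_{0}^{t}\int_{r}^{t}\indic{Q((r,s])=0}h(r,s)\,ds\,Q(dr)=\sum_{i=1}^n g\big(r_i,\mathbb{T}_{r_i}\big).
\]

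\emph{Step 2 (marked Poisson structure).} The strong Markov property of the Poisson process says that, given the atoms up to $r_i$, the point $\mathbb{T}_{r_i}$ has law $\mathbb{F}_{r_i}$ from Lemma~\ref{Law-Tr}. If the pairs $(r_i,\mathbb{T}_{r_i})$ can be treated as a Poisson measure on $\R_+^2$ with intensity $\nu(dr)\mathbb{F}_r(du)$, the Laplace functional formula
\[
\E\Big[e^{-\sum_i g(r_i,m_i)}\Big]=\exp\Big(-\int\!\!\int\big(1-e^{-g(r,u)}\big)\mathbb{F}_r(du)\nu(dr)\Big)
\]
immediately gives the stated identity after taking expectation over $h$.

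\emph{Step 3 (the obstacle).} The marks are not independent of the ground process: $\mathbb{T}_{r_i}=r_{i+1}$ is itself the next atom. Justifying Step~2 is therefore the heart of the proof. I would first try a backward recursion. Define
\[
\Phi(r)=\E\Big[\exp\Big(-\sum_{r_i>r}g\big(r_i,\mathbb{T}_{r_i}\big)\Big)\Big].
\]
Conditioning on the first atom after $r$, using its law $\mathbb{F}_r$, gives an integral equation for $\Phi$. I would then check, using the integration identity in the Remark after Lemma~\ref{Law-Tr}, that the right-hand side of the theorem (with lower limit $r$ in place of $0$) solves the same equation with $\Phi(t)=1$. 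Uniqueness of the solution of this linear Volterra-type equation in $r$ would then give the equality.

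As an alternative, I would condition on $n=Q((0,t])$ and use the fact that, given $n$, the atoms are i.i.d.\ with law $\nu|_{(0,t]}/\nu((0,t])$ in their order statistics. One then computes the expectation of the product $\prod_i e^{-g(r_i,r_{i+1})}$ by iterated integration and resums over $n$. The delicate point in either route is that the dependence between consecutive atoms must be absorbed into the factors $e^{-\nu((r,u])}$ that appear in $\mathbb{F}_r(du)$.
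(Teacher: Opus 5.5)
Your Steps 1 and 2 are exactly the paper's proof. The paper introduces a Poisson random measure $\widetilde Q$ with intensity $\nu(dr)\mathbb{F}_r(du)$, asserts that the pairs $(r_j,\mathbb{T}_{r_j})_j$ are its atoms (the step marked $\stackrel{(d)}{=}$), and applies the Laplace functional. You are right that this identification is the entire content. But it is not merely unjustified: it is false, and so is the stated identity, so neither route in your Step~3 can close the gap.

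The atoms $r_1<r_2<\cdots$ form a Markov chain with transition kernel $\mathbb{F}_r$. The mark $\mathbb{T}_{r_j}=r_{j+1}$ (or $t$) is therefore a function of the ground points, whereas under $\widetilde Q$ the marks would be conditionally independent given them. For instance, on $\{Q((0,t])\ge1\}$ exactly one atom has mark $t$. Under $\widetilde Q$, the number of atoms with mark $t$ would instead be Poisson with mean $\int_0^t e^{-\nu((r,t])}\nu(dr)=1-e^{-\nu((0,t])}$, rather than $\indic{Q((0,t])\ge1}$.

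Here is an explicit counterexample to the theorem: take $\nu$ Lebesgue, $t=1$, $h\equiv1$.
\begin{itemize}
\item \emph{Left side.} The exponent telescopes to $(1-r_1)\indic{r_1\le1}$, so the left side equals $e^{-1}+\int_0^1e^{-r}e^{-(1-r)}dr=2e^{-1}\approx0.736$.
\item \emph{Right side.} Put the atom $e^{-(1-r)}$ of $\mathbb{F}_r$ at $u=1$, as in the Remark after Lemma~\ref{Law-Tr}. Then $\int(1-e^{-(u-r)})\mathbb{F}_r(du)=\tfrac12(1-e^{-2(1-r)})$, and the right side is $\exp(-\tfrac12+\tfrac14(1-e^{-2}))\approx0.753$. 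Without that atom it is $\approx0.919$, so no convention rescues the identity.
\end{itemize}
With $h\equiv c$, the two sides agree to first order in $c$. This is Mecke's formula: under the Palm measure at $r$, the next atom still has law $\mathbb{F}_r$. That first-moment formula is the part that survives; the sides already differ at second order.

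Your recursion, once set up correctly, produces something else. Your $\Phi(r)$ does not close, because for the first atom $\rho$ after $r$, the mark $\mathbb{T}_\rho$ is the next atom and is correlated with the remaining sum. You must instead condition on an atom at $\rho$ (the Palm version), and let $\Phi(\rho)$ be the Laplace transform of the contributions of the atoms in $[\rho,t]$. For diffuse $\nu$, with $g(r,u)=\int_r^u h(r,s)\,ds$, this gives
\[
\Phi(\rho)=\int e^{-g(\rho,u)}\,\Phi(u)\,\mathbb{F}_\rho(du),\qquad \Phi(t)=1,\qquad
\E\Big[\exp\Big(-\sum_i g(r_i,\mathbb{T}_{r_i})\Big)\Big]=\int\Phi(u)\,\mathbb{F}_0(du).
\]
This is a multiplicative functional of the Markov chain, solved by a Neumann series, not by the exponential in the statement. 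In the example, $\Phi(\rho)=e^{-(1-\rho)}$, which recovers $2e^{-1}$. Your order-statistics route likewise returns $2e^{-1}$, not the right-hand side.

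So the proposal cannot be completed as a proof of the stated theorem. The correct statements are the first-moment formula and this linear recursion.
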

				\begin{proof}
					Let $(r_j)$ the jump times of Poisson random measure $Q$. In this case, $\mathbb{T}_{r_j}$ is the elapsed time until the next jump of $Q$ and the next jump occurs according to the intensity measure $\mathbb{F}_{r_j}(du)$ (see Lemma~\ref{Law-Tr}). Thus
					we introduce $\widetilde{Q}$ a standard Poisson random measure of intensity $\nu(dr)\times\mathbb{F}_r(du).$ We note that the family $(r_j,\mathbb{T}_{r_j})_j$ are the points of the measure $\widetilde{Q}$.
					
					Consequently we have, 
					\begin{align*}
						\int_{0}^{t}\int_{r}^{t}\indic{Q([r,s])=0}h(r,s)dsQ(dr)&=\int_{0}^{t}\int_{r}^{\mathbb{T}_r}h(r,s)dsQ(dr)\\
						&=\sum_{j\geq1}\indic{r_j\leq t}\int_{r_j}^{\mathbb{T}_{r_j}}h(r_j,s)ds\\
						&\stackrel{(d)}{=}\int_{[0,t]\times\R_+}\indic{r\leq u\leq t}\int_{r}^{u}h(r,s)ds\widetilde{Q}(dr,du)\\
					\end{align*}

					As a result, from classical result on Poisson random measure~\cite[Theorem~$2.9$, page~$252$]{ccinlar2011probability}
					\begin{align*}
						\E\left[\exp\left(-\int_{0}^{t}\int_{r}^{t}\indic{Q((r,s])=0}h(r,s)dsQ(dr)\right)\right]&=\E\left[\exp\left(-\int_{[0,t]\times\R_+}\indic{r\leq u\leq t}\int_{r}^{u}h(r,s)ds\widetilde{Q}(dr,du)\right)\right]\\
						&=\E\left[\exp\left(-\int_0^t\int_{r}^t[1-e^{-\int_{r}^{u}h(r,s)ds}]\mathbb{F}_r(du)\nu(dr)\right)\right].
					\end{align*}
				\end{proof}
				From Theorem~\ref{Lem-T0} and Theorem~\ref{Th-char} we can easily derive \cite[Theorem~$5.1$]{ngoufack2025functional} that gives the mean of stochastic integral with respect to Poisson measure and with such non-predictable integrand.

				\section{Proof for the FLLN: Theorem~\ref{Th:FLLN}}\label{proof-FLLN}
				\subsection{Proof of Proposition~\ref{Lem:mass-conserv}}\label{sec:mass-conserv}
				We can note that, if we multiply the equation for $x(t)$ by $y(t)$, we obtain
				\begin{equation*}
					\begin{aligned}
						y(t)x(t)&=\E\left[\gamma_0^I(t)y(t)\indic{t\leq\eta_{0}}\exp\left(-\int_{0}^{t}\gamma_{0}^I(r)y(r)dr\right)\right]\\
						&\hspace{1cm}+\E\left[\gamma_0^I(t)y(t)e^{-\nu((\eta_{0},t])}\indic{t>\eta_{0}}\exp\left(-\int_{0}^{t}\gamma_{0}^I(r)y(r)dr\right)\right]\\
						&\hspace{1cm}+\int_{0}^{t}\E\left[y(t)\gamma^I(t-s)\indic{t-s\leq\eta}\exp\left(-\int_{s}^{t}\gamma^I(r-s)y(r)dr\right)\right]x(s)y(s)ds\\
						&\hspace{1cm}+\int_{0}^{t}\E\left[\gamma^I(t-s)y(t)e^{-\nu((s+\eta,t])}\indic{t-s>\eta}\exp\left(-\int_{s}^{t}\gamma^I(r-s)y(r)dr\right)\right]x(s)y(s)ds\\&+\int_0^t\mathcal{K}(x,y)(v)e^{-\nu((v,t])}\E\left[\gamma^V(t-v)y(t)\exp\left(-\int_{v}^{t}\gamma^V(r-v)y(r)dr\right)\right]\nu(dv).							
					\end{aligned}
				\end{equation*}
				Consequently, for $t\geq0,$  by Fubini theorem, we have,
				\begin{align*}
					&\int_{0}^{t}x(s)y(s)ds
					=\int_{0}^{t}\E\left[\gamma_0^I(a)y(a)\indic{a\leq\eta_{0}}\exp\left(-\int_{0}^{a}\gamma_{0}^I(r)y(r)dr\right)\right]da\\
					&\hspace{1cm}+\int_{0}^{t}\E\left[\gamma_0^I(a)y(a)e^{-\nu((\eta_{0},a])}\indic{a>\eta_{0}}\exp\left(-\int_{0}^{a}\gamma_{0}^I(r)y(r)dr\right)\right]da\\
					&\hspace{1cm}+\int_{0}^{t}\int_{s}^{t}\E\left[y(a)\gamma^I(a-s)\indic{a-s\leq\eta}\exp\left(-\int_{s}^{a}\gamma^I(r-s)y(r)dr\right)\right]x(s)y(s)dads\\
					&\hspace{0.5cm}+\int_{0}^{t}\int_{s}^{t}\E\left[\gamma^I(a-s)y(a)e^{-\nu((s+\eta,a])}\indic{a-s>\eta}\exp\left(-\int_{s}^{a}\gamma^I(r-s)y(r)dr\right)\right]x(s)y(s)dads
					\\&+\int_{0}^{t}\int_v^t\mathcal{K}(x,y)(v)e^{-\nu((v,a])}\E\left[\gamma^V(a-v)y(a)\exp\left(-\int_{v}^{a}\gamma^V(r-v)y(r)dr\right)\right]da\nu(dv)
				\end{align*} 
				However,
				\begin{align}\label{eq-cons-2}
					&\int_{0}^{t}\E\left[\gamma_0^I(a)y(a)\indic{a\leq\eta_{0}}\exp\left(-\int_{0}^{a}\gamma_{0}^I(r)y(r)dr\right)\right]da\non\\
					&=\E\left[\indic{t\leq\eta_0}\left\{1-\exp\left(-\int_{0}^{t}\gamma_{0}^I(r)y(r)dr\right)\right\}\right]
					+\E\left[\indic{t>\eta_0}\left\{1-\exp\left(-\int_{0}^{\eta_0}\gamma_{0}^I(r)y(r)dr\right)\right\}\right].
				\end{align}
				and using integration by parts,
				\begin{align}\label{eq-cons-3}
					&\int_{0}^{t}\E\left[\gamma_0^I(a)y(a)e^{-\nu((\eta_{0},a])}\indic{a>\eta_{0}}\exp\left(-\int_{0}^{a}\gamma_{0}^I(r)y(r)dr\right)\right]da\non\\
					&=\E\left[\indic{t>\eta_0}\exp\left(-\int_{0}^{\eta_0}\gamma_{0}^I(r)y(r)dr\right)\right]-\E\left[e^{-\nu((\eta_{0},t])}\indic{t>\eta_0}\exp\left(-\int_{0}^{t}\gamma_{0}^I(r)y(r)dr\right)\right]\non\\
					&\hspace{1cm}-\int_{0}^{t}\E\left[e^{-\nu((\eta_{0},a])}\indic{a>\eta_{0}}\exp\left(-\int_{0}^{a}\gamma_{0}^I(r)y(r)dr\right)\right]\nu(da)
				\end{align}
				Similarly,
				\begin{align}\label{eq-cons-4}
					&\int_{0}^{t}\int_{s}^{t}\E\left[y(a)\gamma^I(a-s)\indic{a-s\leq\eta}\exp\left(-\int_{s}^{a}\gamma^I(r-s)\fF(r)dr\right)\right]x(s)y(s)dads\non\\
					&=\int_{0}^{t}\E\left[\indic{t-s\leq\eta}\right]x(s)y(s)ds-\int_{0}^{t}\E\left[\indic{t-s\leq\eta}\exp\left(-\int_{s}^{t}\gamma^I(r-s)\fF(r)dr\right)\right]x(s)y(s)ds\non\\
					&+\int_{0}^{t}\E\left[\indic{t-s>\eta}\right]x(s)y(s)ds-\int_{0}^{t}\E\left[\indic{t-s>\eta}\exp\left(-\int_{s}^{s+\eta}\gamma^I(r-s)\fF(r)dr\right)\right]x(s)y(s)ds,
				\end{align}
				\begin{align}\label{eq-cons-5}
					&\int_{0}^{t}\int_{s}^{t}\E\left[\gamma^I(a-s)y(a)e^{-\nu((s+\eta,a])}\indic{a-s>\eta}\exp\left(-\int_{s}^{a}\gamma^I(r-s)y(r)dr\right)\right]x(s)y(s)dads\non\\
					&=\int_{0}^{t}\E\left[\indic{t-s>\eta}\exp\left(-\int_{s}^{s+\eta}\gamma^I(r-s)y(r)dr\right)\right]x(s)y(s)ds\non\\
					&\hspace{1cm}-\int_{0}^{t}\E\left[e^{-\nu((s+\eta,t])}\indic{t-s>\eta}\exp\left(-\int_{s}^{t}\gamma^I(r-s)y(r)dr\right)\right]x(s)y(s)ds\non\\
					&\hspace{1cm}-\int_{0}^{t}\int_{s}^{t}\E\left[e^{-\nu((s+\eta,a])}\indic{a-s>\eta}\exp\left(-\int_{s}^{a}\gamma^I(r-s)y(r)dr\right)\right]x(s)y(s)\nu(da)ds.
				\end{align}
				and 
				\begin{align}\label{eq-cons-6}
					&\int_{0}^{t}\int_v^t\mathcal{K}(x,y)(v)e^{-\nu((v,a])}\E\left[\gamma^V(a-v)y(a)\exp\left(-\int_{v}^{a}\gamma^V(r-v)y(r)dr\right)\right]da\nu(dv)\non\\
					&=\int_0^t\mathcal{K}(x,y)(v)\nu(dv)-\int_{0}^{t}\mathcal{K}(x,y)(v)e^{-\nu((v,t])}\E\Big[\exp\left(-\int_{v}^{t}\gamma^V(r-v)y(r)dr\right)\Big]\nu(dv)\non\\
					&-\int_{0}^{t}\int_v^t\mathcal{K}(x,y)(v)e^{-\nu((v,a])}\E\left[\exp\left(-\int_{v}^{a}\gamma^V(r-v)y(r)dr\right)\right]\nu(da)\nu(dv).
				\end{align}
				As a result, Proposition~\ref{Lem:mass-conserv} follows from equation~\eqref{eq-cons-2}, \eqref{eq-cons-3}, \eqref{eq-cons-4}, \eqref{eq-cons-5}, and \eqref{eq-cons-6}.
				\subsection{Proof of Theorem~\ref{Th:exists}}\label{exist}
				In this section, we establish the existence and uniqueness of the system of equations \eqref{eq:F-G-x}–\eqref{eq:F-G-y}. We begin with the proof of uniqueness.
				
				\subsubsection{Uniqueness}
				From Proposition~\ref{Lem:mass-conserv} and Assumption~\ref{AS-lambda}, we first note that if $(x,y)\in D^2_+$ is a solution of equation \eqref{eq:F-G-x}-\eqref{eq:F-G-y}, for all $0\leq t\leq T,$ there exists $C_T>0,$ such that,
				\[x(t)\leq C_T\text{ and }y(t)\leq\lambda_* C_T.\] 
				We suppose now $(x_1,y_1)$ and $(x_2,y_2)$ two solutions of equation \eqref{eq:F-G-x}-\eqref{eq:F-G-y}. Then we have,
				\begin{align*}
					&x_1(t)-x_2(t)=\E\left[\gamma_0^I(t)\indic{t\leq\eta_{0}}\Bigg(\exp\left(-\int_{0}^{t}\gamma_{0}^I(r)y_1(r)dr\right)-\exp\left(-\int_{0}^{t}\gamma_{0}^I(r)y_2(r)dr\right)\Bigg)\right]
					\\&+\E\left[\gamma_0^I(t)e^{-\nu((\eta_{0},t])}\indic{t>\eta_{0}}\Bigg(\exp\left(-\int_{0}^{t}\gamma_{0}^I(r)y_1(r)dr\right)-\exp\left(-\int_{0}^{t}\gamma_{0}^I(r)y_2(r)dr\right)\Bigg)\right]\\
					&+\int_{0}^{t}\E\left[\gamma^I(t-s)\indic{t-s\leq\eta}\Bigg(\exp\left(-\int_{s}^{t}\gamma^I(r-s)y_1(r)dr\right)x_1(s)y_1(s)\right.\\
					&\hspace{3cm}\left.-\exp\left(-\int_{s}^{t}\gamma^I(r-s)y_2(r)dr\right)x_2(s)y_2(s)\Bigg)\right]ds\\
					&+\int_{0}^{t}\E\left[\gamma^I(t-s)e^{-\nu((s+\eta,t])}\indic{t-s>\eta}\Bigg(\exp\left(-\int_{s}^{t}\gamma^I(r-s)y_1(r)dr\right)x_1(s)y_1(s)\right.\\
					&\hspace{3cm}\left.-\exp\left(-\int_{s}^{t}\gamma^I(r-s)y_2(r)dr\right)x_2(s)y_2(s)\Bigg)\right]ds\\
					&+\int_0^t\Big(\mathcal{K}(x_1, y_1)(v) - \mathcal{K}(x_2, y_2)(v)\Big)e^{-\nu((v,t])}\\
					&\hspace{2cm}\times\E\left[\gamma^V(t-v)\exp\left(-\int_{v}^{t}\gamma^V(r-v)y_1(r)dr\right)\right]\nu(dv)\\
					&+\int_0^t \mathcal{K}(x_2, y_2)(v)e^{-\nu((v,t])}\E\Bigg[\gamma^V(t-v)\Bigg(\exp\left(-\int_{v}^{t}\gamma^V(r-v)y_1(r)dr\right)\\
					&\hspace{2cm}-\exp\left(-\int_{v}^{t}\gamma^V(r-v)y_2(r)dr\right)\Bigg)\Bigg]\nu(dv).
				\end{align*}
				Since $|e^{-a}-e^{-b}| \leq |a-b|$ for all $a, b \in \mathbb{R}_+$, and given that $\gamma^I \leq 1$, $\gamma^I_0 \leq 1$, $\gamma^V\leq1$, and $x_i y_i \leq \lambda_* C_T^2$, the following holds. Furthermore, as $\mathbb{F}_s$ and $\mathbb{F}_{s,z}$ are probability measures and $\nu$ is a $\sigma$-finite measure (locally finite on $[0, T]$), following the methodology in \cite[Theorem~$3.1$, p.~$24$]{forien-Zotsa2022stochastic}, there exists a constant $L_T > 0$ such that:
				\[
				|\mathcal{K}(x_1, y_1)(v) - \mathcal{K}(x_2, y_2)(v)| \leq L_T \int_{0}^{T} [|x_1(s) - x_2(s)| + |y_1(s) - y_2(s)|] ds.
				\]
				Consequently, for a given $T > 0$, there exists a constant $C > 0$ such that for all $0 \leq t \leq T$:
				\begin{equation}\label{eq:x}
					|x_1(t) - x_2(t)| \leq C \int_{0}^{t} [|x_1(s) - x_2(s)| + |y_1(s) - y_2(s)|] ds.
				\end{equation}
				By definition of $y_1$ and $y_2$, we deduce that,
				\begin{equation}\label{eq:y}
					\left|y_1(t)-y_2(t)\right|\leq \lambda_*C\int_{0}^{t}[|x_1(s)-x_2(s)|+|y_1(s)-y_2(s)|]ds,
				\end{equation}
				Uniqueness follows from \eqref{eq:x} and \eqref{eq:y} and the Gronwall Lemma.
				
				\subsubsection{Existence}
				
				To establish the existence of a solution on \(\mathbb{R}_+\), it suffices to prove existence on every compact subset of \(\mathbb{R}_+\).
				
				Define the mappings:
				\begin{equation}\label{eq:F-G-x-exist}
					\begin{aligned}
						F(x,y)(t)&=\E\left[\gamma_0^I(t)\indic{t\leq\eta_{0}}\exp\left(-\int_{0}^{t}\gamma_{0}^I(r)y(r)dr\right)\right]\\
						&\hspace{0.5cm}+\E\left[\gamma_0^I(t)e^{-\nu((\eta_{0},t])}\indic{t>\eta_{0}}\exp\left(-\int_{0}^{t}\gamma_{0}^I(r)y(r)dr\right)\right]\\
						&\hspace{0.5cm}+\int_{0}^{t}\E\left[\gamma^I(t-s)\indic{t-s\leq\eta}\exp\left(-\int_{s}^{t}\gamma^I(r-s)y(r)dr\right)\right]x(s)y(s)ds\\
						&\hspace{0.5cm}+\int_{0}^{t}\E\left[\gamma^I(t-s)e^{-\nu((s+\eta,t])}\indic{t-s>\eta}\exp\left(-\int_{s}^{t}\gamma^I(r-s)y(r)dr\right)\right]x(s)y(s)ds\\&\hspace{0.5cm}+\int_0^t\mathcal{K}(x,y)(v)e^{-\nu((v,t])}\E\left[\gamma^V(t-v)\exp\left(-\int_{v}^{t}\gamma^V(r-v)y(r)dr\right)\right]\nu(dv)
					\end{aligned}
				\end{equation}
				and 
				\begin{equation}\label{eq:F-G-y-exist}
					\begin{aligned}
						G(x,y)(t)&=\E\left[\lambda_0^I(t)\indic{t\leq\eta_{0}}\exp\left(-\int_{0}^{t}\gamma_{0}^I(r)y(r)dr\right)\right]\\
						&\hspace{1cm}+\E\left[\lambda_0^I(t)e^{-\nu((\eta_{0},t])}\indic{t>\eta_{0}}\exp\left(-\int_{0}^{t}\gamma_{0}^I(r)y(r)dr\right)\right]\\
						&\hspace{1cm}+\int_{0}^{t}\E\left[\lambda^I(t-s)\indic{t-s\leq\eta}\exp\left(-\int_{s}^{t}\gamma^I(r-s)y(r)dr\right)\right]x(s)y(s)ds\\
						&\hspace{1cm}+\int_{0}^{t}\E\left[\lambda^I(t-s)e^{-\nu((s+\eta,t])}\indic{t-s>\eta}\exp\left(-\int_{s}^{t}\gamma^I(r-s)y(r)dr\right)\right]x(s)y(s)ds.							
					\end{aligned}
				\end{equation}
				
				Let \(H : L^\infty(\mathbb{R}_+,\R^2_+) \to L^\infty(\mathbb{R}_+,\R_+^2)\) be defined by
				\[
				H(x,y)(t) = \left(F(x,y)(t), G(x,y)(t)\right),
				\]
				where \(L^\infty(\mathbb{R}_+,\R_+^2)\) denotes the space of pairs of bounded measurable functions from \(\mathbb{R}_+\) to \(\mathbb{R}_+\).
				
				Recall that \((L^\infty(\mathbb{R}_+,\R^2_+), \|\cdot\|_\infty)\) is a Banach space under the supremum norm:
				\[
				\|(x,y)\|_\infty = \|x\|_\infty + \|y\|_\infty.
				\]
				Fix \(T \geq 0\), and define the truncated norm:
				\[
				\|(x,y)\|_T = \|(x,y)\indic{[0,T]}\|_\infty.
				\]
				
				From the uniqueness setting, \(H\) is continuous from \(L^\infty([0,T],\R^2_+)\) to itself.
				
				Define the set:
				\[
				E = \left\{(x,y) \in L^\infty([0,T],\R^2_+) : \exists \alpha \in [0,1],\ \alpha H(x,y) = (x,y)\right\}.
				\]
				
				Our goal is to show that there exists \((x,y) \in L^\infty([0,T],\R^2_+)\) such that \(H(x,y) = (x,y)\). By Schaefer's fixed-point theorem (see \cite{eberhard1990nonlinear,forster2014leray} or \cite[Chapter~4]{precup2002methods}), it suffices to prove that:
				\(H\) is continuous and compact, and the set \(E\) is bounded.
				
				\medskip
				Since the map \(t\mapsto H(x,y)(t)\) is càdlàg on \([0,T]\), the fixed point \((x,y)\in D([0,T], \mathbb{R}^2_+)\).
				
				\subsubsection*{Compactness in \(L^\infty([0,T], \mathbb{R}^2_+)\)}
				
				Let
				\[
				\mathbb{B} = \left\{(x,y) \in L^\infty([0,T], \mathbb{R}^2_+) : \|(x,y)\|_T \leq 1\right\},
				\text{ and } \mathbb{K} = H(\mathbb{B}).
				\]
				
				By \cite[Theorem, page~4]{cherkas1970compactness}, \(\mathbb{K}\) is compact in \(L^\infty([0,T], \mathbb{R}^2_+)\) if, for every \(\varepsilon > 0\), there exists a finite partition \((A_i)_i\) of \([0,T]\) into disjoint measurable sets such that:
				\begin{equation}\label{eq-equic}
					\sup_i \sup_{s,t \in A_i} |H(x,y)(t) - H(x,y)(s)| \leq \varepsilon,
				\end{equation}
				and
				\[
				\sup_{(x,y) \in \mathbb{B}} \|H(x,y)\|_T < \infty.
				\]
				
				To verify \eqref{eq-equic}, we introduce the modulus of continuity:
				\[
				W'(f,\delta) = \inf_{\{t_i\},\, t_i - t_{i-1} \ge \delta} \max_{1 \leq i \leq r} \sup_{t_{i-1} \leq s \leq t < t_i} |f(t) - f(s)|.
				\]
				It suffices to show:
				\[
				\lim_{\delta \to 0} \sup_{(x,y) \in \mathbb{B}} W'(H(x,y), \delta) = 0.
				\]
				
				Since $|e^{-a}-e^{-b}|\leq|a-b|,\,\forall a,b\in\R_+,\,\gamma_0^I \leq 1$, \(\gamma^I \leq 1\), and \(\|(x,y)\|_T \leq 1\), for $h\geq0,$ we have,
				\begin{align*}
					&\left|\E\left[\gamma_0^I(t+h)\indic{t+h\leq\eta_{0}}\exp\left(-\int_{0}^{t+h}\gamma_{0}^I(r)y(r)dr\right)\right]-\E\left[\gamma_0^I(t)\indic{t\leq\eta_{0}}\exp\left(-\int_{0}^{t}\gamma_{0}^I(r)y(r)dr\right)\right]\right|\\
					&\leq\E\left[\left|\gamma_0^I(t+h)\indic{t+h\leq\eta_{0}}-\gamma_0^I(t)\indic{t\leq\eta_{0}}\right|\right]+h,
				\end{align*}
				
				\begin{align*}
					&\Bigg|\int_{0}^{t+h}\E\left[\gamma^I(t+h-s)\indic{t+h-s\leq\eta}\exp\left(-\int_{s}^{t+h}\gamma^I(r-s)y(r-s)\right)\right]x(s)y(s)ds\\
					&\hspace{2cm}-\int_{0}^{t}\E\left[\gamma^I(t-s)\indic{t-s\leq\eta}\exp\left(-\int_{s}^{t}\gamma^I(r-s)y(r-s)\right)\right]x(s)y(s)ds\Bigg|\\
					&\leq (1+T)h+\int_{0}^{T}\E\left[\left|\gamma^I(s+h)\indic{s+h\leq\eta}-\gamma^I(s)\indic{s\leq\eta}\right|\right]ds.
				\end{align*}
				and there exists $C_T>0,$ such that,
				\begin{align*}
					&\Bigg|\int_0^{t+h} \mathcal{K}(x,y)(v)e^{-\nu((v,t+h])}\E\left[\gamma^V(t+h-v)\exp\left(-\int_{v}^{t+h}\gamma^V(r-v)y(r)dr\right)\right]\nu(dv)\\
					&-\int_0^t\mathcal{K}(x,y)(v) e^{-\nu((v,t])}\E\left[\gamma^V(t-v)\exp\left(-\int_{v}^{t}\gamma^V(r-v)y(r)dr\right)\right]\nu(dv)\Bigg|\\
					&\leq C_T\left(\nu((t, t+h]) + h+\int_{0}^{T} \mathbb{E}\left[ \left| \gamma^V(s+h) - \gamma^V(s) \right| \right] \nu(ds)\right)
				\end{align*}
				We obtain similar bound for the others two terms.
				
				Hence, there exists \(C_T > 0\) such that:
				\begin{align*}
					W'(F(x,y), \delta) &\leq C_T\Bigg[\mathbb{E}[W'(\gamma_0^I\indic{\cdot\leq\eta_0}, \delta)]+\E\left[W'(\gamma_0^Ie^{-\nu((\eta_{0},\cdot])}\indic{\cdot>\eta_{0}},\delta)\right]+W'(Id, \delta)\\
					&\hspace{1cm} + \mathbb{E}[W'(\gamma^I\indic{\cdot\leq\eta}, \delta)]+\mathbb{E}[W'(\gamma^I\indic{\cdot>\eta}, \delta)]+W'(\nu((0,\cdot]), \delta)+\mathbb{E}[W'(\gamma^V, \delta)]\Bigg]
				\end{align*}
				where \(Id(t) = t\).
				
				Similarly, using \(\lambda_0^I, \lambda^I \leq \lambda_*\), we obtain:
				\begin{align*}
					W'(G(x,y), \delta) &\leq \lambda_*\Bigg(\mathbb{E}[W'(\gamma_0^I\indic{\cdot\leq\eta_0}, \delta)]+\E\left[W'(\gamma_0^Ie^{-\nu((\eta_{0},\cdot])}\indic{\cdot>\eta_{0}},\delta)\right]+(3+2T) W'(Id, \delta)\\
					&\hspace{1cm} + T\mathbb{E}[W'(\gamma^I\indic{\cdot\leq\eta}, \delta)]+T\mathbb{E}[W'(\gamma^I\indic{\cdot>\eta}, \delta)]+TW'(\nu((0,\cdot]), \delta)\Bigg).
				\end{align*}
				Since all involved functions belong to \(D([0,T])\) and are bounded, the dominated convergence theorem implies:
				\[
				\lim_{\delta \to 0} \sup_{(x,y) \in \mathbb{B}} W'(H(x,y), \delta) = 0.
				\]
				
				Moreover, it is straightforward to show that, there exists $C_T>0,$
				\[
				\sup_{(x,y) \in \mathbb{B}} \|H(x,y)\|_T \leq C_T.
				\]
				
				Thus, $\mathbb{K}$ is compact.
				
				\subsubsection*{Boundedness of the Set \(E\)}
				
				Let \((x,y) \in E\), so there exists \(\alpha \in [0,1]\) such that $(x,y) = \alpha H(x,y)$. 
				From the results in Section~\ref{sec:mass-conserv} it follows that, 
				\begin{multline*}
					\begin{aligned}
						&\alpha\E\left[\indic{t\leq\eta_{0}}\exp\left(-\int_{0}^{t}\gamma_{0}^I(r)y(r)dr\right)\right]+\alpha\E\left[e^{-\nu((\eta_0,t])}\indic{t>\eta_{0}}\exp\left(-\int_{0}^{t}\gamma_{0}^I(r)y(r)dr\right)\right]\\
						&\hspace{1cm}+\alpha\int_{0}^{t}\E\left[\indic{t-s\leq\eta}\exp\left(-\int_{s}^{t}\gamma^I(r-s)y(r)dr\right)\right]x(s)y(s)ds\\
						&\hspace{1cm}+\alpha\int_{0}^{t}\E\left[e^{-\nu((s+\eta,t])}\indic{t-s\geq\eta}\exp\left(-\int_{s}^{t}\gamma^I(r-s)y(r)dr\right)\right]x(s)y(s)ds\\
						&\hspace{1cm}+\alpha\int_{0}^{t}\E\left[e^{-\nu((\eta_0,a])}\indic{a\geq\eta_{0}}\exp\left(-\int_{0}^{a}\gamma_{0}^I(r)y(r)dr\right)\right]\nu(da)\\
						&\hspace{1cm}+\alpha\int_{0}^{t}\int_{s}^{t}\E\left[e^{-\nu((s+\eta,a])}\indic{a-s\geq\eta}\exp\left(-\int_{s}^{a}\gamma^I(r-s)y(r)dr\right)\right]\nu(da)x(s)y(s)ds\\
						&\hspace{1cm}+\alpha\int_{0}^{t}e^{-\nu((v,t])}\mathcal{K}(x,y)(v)\E\left[\exp\left(-\int_{v}^{t}\gamma^V(r-v)y(r)dr\right)\right]\nu(dv)\\
						&\hspace{1cm}+\alpha\int_{0}^{t}\mathcal{K}(x,y)(v)\int_{v}^{t}e^{-\nu((v,a])}\E\Bigg[\exp\left(-\int_{v}^{a}\gamma^V(r-v)y(r)dr\right)\Bigg]\nu(da)\nu(dv)\\
						&=\alpha+\alpha\int_{0}^{t}\mathcal{K}(x,y)(v)\nu(dv)-(1-\alpha)\int_{0}^{t}x(s)y(s)ds\\
						&\leq \alpha+\alpha\int_{0}^{t}\mathcal{K}(x,y)(v)\nu(dv)
					\end{aligned}
				\end{multline*}
				where the last line follows from the fact that $x$ and $y$ are non-negative.
				
				As a result, since $(x,y) = \alpha H(x,y)$, given by the  expressions \eqref{eq:F-G-x-exist} and \eqref{eq:F-G-y-exist}, and the fact that, $\mathcal{K}(x,y)(v)\leq1$ and $\alpha\leq1$, we deduce that, for all $0\leq t\leq T,$
				\begin{equation*}
					x(t)\leq\alpha\Bigg(1+\int_{0}^{T}\mathcal{K}(x,y)(v)\nu(dv)\Bigg)\leq 1+\nu([0,T])
				\end{equation*}
				and 
				\begin{equation*}
					y(t)\leq\lambda_*\alpha\Bigg(1+\int_{0}^{T}\mathcal{K}(x,y)(v)\nu(dv)\Bigg)\leq \lambda_*\big(1+\nu([0,T])\big).
				\end{equation*}
				Hence $\|(x,y)\|_T\leq (1+\lambda_*)\big(1+\nu([0,T])\big).$
				
				This concludes the proof.

				\subsection{Proof of Theorem~\ref{Th:FLLN}}
				We first construct a system of stochastic equations driven by Poisson random measures (PRMs), and then use an approach of the type of propagation of chaos as in \cite{forien-Zotsa2022stochastic}. 
				
				Let $(\lambda_0^I,\gamma_0^I)$, and $(\lambda^I,\gamma^I)$ be a random variable taking values in $D_+^2$. Let $\gamma^V$ an i.i.d random variables taking values in $D_+$. We assume that $(\lambda_0^I,\gamma_0^I)$, $(\lambda^I,\gamma^I)$ and $\gamma^V$ are independent and satisfies Assumption~\ref{AS-lambda}.
				Also let $Q$ be a standard Poisson random measure on $\R^2_+$, and $Q^V$ be a Poisson standard random measure on $\R_+$, both independent of the previous random variables. We set $\mathcal{N}_t=Q^V([0.t]).$
				We define for $t\geq0$, 
				\begin{equation}
					\left\{
					\begin{aligned}
						A(t)&:=\int_{0}^{t}\int_{0}^{+\infty}\indic{u\leq\Upsilon(r^-)}Q(dr,du)\\[0.5cm]
						\Upsilon(t)&:=\gamma_{A(t),\mathcal{N}(t)}(t)\mathbb{E}\left[\lambda_{A(t),\mathcal{N}(t)}(t)\right],\label{suite_A}
					\end{aligned}\right.
				\end{equation}
				with
				\begin{align*}
					\lambda_{A(t),\mathcal{N}(t)}(t)=\lambda_{A(t)}^I(t-\tau_{A(t)})\indic{ T^V_{\mathcal{N}_t}-\tau_{A(t)}\leq\eta_{A(t)}}.
				\end{align*}
				and 
				\begin{align*}
					\gamma_{A(t),\mathcal{N}(t)}(t)=\gamma_{A(t)}^I(t-\tau_{A(t)})\indic{T_{\mathcal{N}_t}^V-\tau_{A(t)}\leq\eta_{A(t)}}+\gamma^V_{\mathcal{N}_t}(t-T^V_{\mathcal{N}_t})\indic{T_{\mathcal{N}_t}^V-\tau_{A(t)}>\eta_{A(t)}}
				\end{align*}
				where $\tau_{A(t)}$ is the jump time of process $ A $ at time $t$. We recall that $T^V_{\mathcal{N}_t}$ is the vaccination time at time $t$ that corresponds to the jump time of process $\mathcal{N}$ at time $t$.
				
				Note that we construct $A$ by induction on the jumps times. Assumption \ref{AS-lambda}  implies that the rate $\Upsilon(t)$ is bounded almost surely
				by the constant $\lambda_\ast$. Consequently the jump times do not accumulate, and the above induction defines $A(t)$ for all $t\geq0$.
				\begin{prop} \label{prop:equal-G-F}
					Under Assumption~\ref{AS-lambda}, we have the following identity:
					\begin{equation}\label{G-eq-fS}
						\big(\overline{\mathfrak{S}}(t),\overline{\mathfrak{F}}(t)\big)=\big(\mathbb{E}\left[\gamma_{A(t),\mathcal{N}(t)}(t)\right],\mathbb{E}\left[\lambda_{A(t),\mathcal{N}(t)}(t)\right]\big).
					\end{equation}
				\end{prop}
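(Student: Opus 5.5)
We prove \eqref{G-eq-fS} by showing that the pair $(x,y)(t):=\big(\E[\gamma_{A(t),\mathcal{N}(t)}(t)],\E[\lambda_{A(t),\mathcal{N}(t)}(t)]\big)$ solves \eqref{eq:F-G-x}--\eqref{eq:F-G-y}; the identity then follows from the uniqueness part of Theorem~\ref{Th:exists}. The bounds $x\le1$, $y\le\lambda_*$ follow from Assumption~\ref{AS-lambda}. For the càdlàg property of $(x,y)$ we use dominated convergence on the càdlàg integrands. By construction the rate in \eqref{suite_A} is $\Upsilon(t)=\gamma_{A(t),\mathcal{N}(t)}(t)\,y(t)$ with $y$ deterministic. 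Hence, conditionally on the random functions and on $Q^V$, the process $A$ is a Poisson-type counting process with rate $\gamma_{A(t),\mathcal N(t)}(t)y(t)$.

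\textbf{Decomposition according to the last infection.} Write $\gamma_{A(t),\mathcal N(t)}(t)=\gamma_{A(t),\mathcal N(t)}(t)\indic{A(t)=0}+\gamma_{A(t),\mathcal N(t)}(t)\indic{A(t)\ge1}$. We then split further according to whether the last vaccination campaign after the last infection was successful. This gives three kinds of terms.

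\begin{enumerate}
\item \emph{No infection and no successful vaccination on $(0,t]$.} Before time $\eta_0$ the individual is infected, so vaccination has no effect. After $\eta_0$, a jump of $Q^V$ in $(\eta_0,t]$ makes the individual vaccinated. On the event that no such jump occurs, the susceptibility is $\gamma^I_0$ and the infection rate is $\gamma^I_0(r)y(r)$. Conditioning on $(\lambda_0^I,\gamma_0^I)$ and using independence of $Q^V$ gives the factor $e^{-\nu((\eta_0,t])}$ (Lemma~\ref{Law-Tr}), and the survival probability gives $\exp(-\int_0^t\gamma_0^I(r)y(r)dr)$. Together these produce the first two terms of \eqref{eq:F-G-x}.
\item \emph{Last event is an infection at time $s$.} The expected number of infections in $ds$ equals $\E[\Upsilon(s)]ds=x(s)y(s)ds$. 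After that infection, a fresh independent copy $(\lambda^I,\gamma^I)$ is drawn, and the memoryless structure of the Poisson measures lets us repeat the argument of item 1 from time $s$. This gives the third and fourth terms, with $e^{-\nu((s+\eta,t])}$.
\item \emph{Last event is a successful vaccination at time $v$.} A campaign occurs in $dv$ with intensity $\nu(dv)$, and it is successful exactly when the individual is not infected at $v$. That probability is $1-\E[\indic{\text{infected at }v}]=\mathcal K(x,y)(v)$, which is computed by items 1--2 with $\gamma$ replaced by $1$ and the indicator $\indic{\cdot\le\eta}$. After time $v$ we need no further campaign in $(v,t]$ (factor $e^{-\nu((v,t])}$) and no infection under the susceptibility $\gamma^V(\cdot-v)$. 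This gives the last term of \eqref{eq:F-G-x}.
\end{enumerate}

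Replacing $\gamma$ by $\lambda$ gives \eqref{eq:F-G-y}. There the vaccinated term vanishes, because infectivity becomes zero upon vaccination.

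\textbf{Main obstacle.} The delicate step is item 2: making rigorous the renewal at the last infection time $\tau_{A(t)}$ jointly with the Poisson vaccination times. We would write, for any bounded $\varphi$,
\[
\E\big[\varphi\indic{A(t)\ge1}\big]=\E\Big[\int_0^t\int_{\R_+}\indic{u\le\Upsilon(s^-)}\,\Phi(s,t)\,Q(ds,du)\Big],
\]
where $\Phi(s,t)$ is the contribution of the infection at $s$, multiplied by the indicator that no later infection or successful vaccination occurs in $(s,t]$. Conditionally on $\mathcal F_{s}$, the factor $\Phi(s,t)$ depends only on the fresh variables $(\lambda^I,\gamma^I,\gamma^V)$ drawn at $s$ and on the restrictions of $Q,Q^V$ to $(s,t]$. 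These are independent of $\mathcal F_s$, so the compensator formula gives $\E[\Upsilon(s)]\,\E[\Phi(s,t)]\,ds$.

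The expectation $\E[\Phi(s,t)]$ involves a time-dependent exponential survival under $Q$. It also involves the first jump of $Q^V$ after $s+\eta$, whose law is given by Lemma~\ref{Law-Tr}; this is the mechanism behind Theorem~\ref{Lem-T0}. The same argument with $\nu(dv)$ in place of $x(s)y(s)ds$ handles item 3. This route avoids Poisson integrals with non-predictable integrands, because we condition on the probability of being infected at the campaign time, which is $\bar I(v)$.
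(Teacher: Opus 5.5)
Your argument is correct for diffuse $\nu$, as is the paper's; both break down when $\nu$ has atoms (see the second paragraph). The global plan matches the paper's. You set $(x,y)$ equal to the two expectations and show they solve \eqref{eq:F-G-x}--\eqref{eq:F-G-y} by renewing at infection times and at campaign times. At infection times the compensator of $A$ is $x(s)y(s)\,ds$ once the fresh $(\lambda^I,\gamma^I)$ are integrated out; campaign times have intensity $\nu$. You obtain the vaccinated term from the success probability $\mathcal K(x,y)(v)=1-\bar I(v)$, computed as a complement exactly as in \eqref{eq-x-EE}, and conclude with the uniqueness part of Theorem~\ref{Th:exists}.

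The difference lies in the non-vaccinated contributions. The paper splits according to where the last campaign $T^V_{\mathcal N_t}$ falls relative to the last infection: $\mathcal N_t=0$, inside $[\tau_{A(t)},\tau_{A(t)}+\eta_{A(t)}]$, or before $\tau_{A(t)}$. It evaluates the three pieces separately in Lemmas~\ref{Lem:exp-I-y}, \ref{lem:Inf} and \ref{Lem:exp-II-y}, conditioning on $\mathcal F_{\tau_i}\vee\mathcal G_{T^V_j}$ and replacing the sum over $j$ by $\int e^{-\nu((v,t])}\nu(dv)$. Only afterwards does it recombine them via the identities \eqref{eq-id-nu0}--\eqref{eq-id-nu}. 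You instead observe that the state stays infection-governed exactly when $Q^V$ has no point in $(\tau_{A(t)}+\eta_{A(t)},t]$. This gives $\indic{t-s\le\eta}+e^{-\nu((s+\eta,t])}\indic{t-s>\eta}$ in one step from $\P(Q^V((a,t])=0)=e^{-\nu((a,t])}$. Your route is shorter and needs no recombination identities. The paper's is more mechanical, since every term is a conditioning at a pair of stopping times $(\tau_i,T^V_j)$, but it is longer.

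A side benefit of your items 1--2 is that they hold for any $\nu$. By contrast, replacing the last campaign by $e^{-\nu((v,t])}\nu(dv)$ counts points of $Q^V$ with multiplicity, so it (and the identities \eqref{eq-id-nu0}--\eqref{eq-id-nu}) is exact only when $\nu$ is diffuse. Your item 3 uses precisely that replacement, as do the paper's Lemmas~\ref{Lem:-II-x}--\ref{Lem:-III-x}. So both proofs, and the vaccinated term of \eqref{eq:F-G-x} itself, tacitly require $\nu$ to have no atoms. For example, take $\lambda_0^I\equiv\lambda^I\equiv0$ (so $\eta_0=0$ and $y\equiv0$), $\gamma_0^I\equiv\gamma^V\equiv1$ and $\nu=\alpha\delta_1$. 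Then the true $x$ is identically $1$, while \eqref{eq:F-G-x} gives $x(t)=e^{-\alpha}+\alpha>1$ for $t\ge1$. You should either assume $\nu$ diffuse, or in item 3 replace $\nu(dv)$ by $\nu^c(dv)+\sum_a(1-e^{-\nu(\{a\})})\delta_a(dv)$. Here $\nu^c$ is the diffuse part of $\nu$ and the sum runs over its atoms; at an atom $a$, the probability that the last campaign sits at $a$ is $(1-e^{-\nu(\{a\})})e^{-\nu((a,t])}$.
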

				\begin{proof}
					We define the pair  $(x(t),y(t)):=\big(\mathbb{E}\left[\gamma_{A(t),\mathcal{N}(t)}(t)\right],\mathbb{E}\left[\lambda_{A(t),\mathcal{N}(t)}(t)\right]\big)$. From Section~\ref{sec-exp-F-G}, from Lemma~\ref{Lem:-I-x} and ~\ref{Lem-gam-II} we derive that,
					\begin{equation*}
						\begin{aligned}
							&x(t)=e^{-\nu([0,t])}\Bigg(\E\left[\gamma_{0}^I(t)\exp\left(-\int_{0}^{t}\gamma_{0}^I(r)y(r)dr\right)\right]\\
							&\hspace{1cm}+\int_{0}^{t}\E\left[\gamma^I(t-s)\exp\left(-\int_{s}^{t}\gamma^I(r-s)y(r)dr\right)\right]y(s)x(s)ds\Bigg)\\
							&+\int_{0}^{t}e^{-\nu((v,t])}\E\left[\gamma_{0}^I(t)\indic{v\leq\eta_{0}}\exp\left(-\int_{0}^{t}\gamma_{0}^I(r)y(r)dr\right)\right]\nu(dv)\\
							&\hspace{0.5cm}+\int_{0}^{t}\int_{s}^{t}e^{-\nu((v,t])}\E\left[\gamma^I(t-s)\indic{v-s\leq\eta}\exp\left(-\int_{s}^{t}\gamma^I(r-s)y(r)dr\right)\right]\nu(dv)x(s)y(s)ds\\
							&+\int_{0}^{t}\int_{0}^{s}e^{-\nu((v,t])}\E\left[\gamma^I(t-s)\exp\left(-\int_{s}^{t}\gamma^I(r-s)y(r)dr\right)\right]\nu(dv)x(s)y(s)ds\\
							&+\int_{0}^{t}e^{-\nu((v,t])}\mathcal{K}(x,y)(v)
							\E\left[\gamma^V(t-v)\exp\left(-\int_{v}^{t}\gamma^V(r-v)y(r)dr\right)\right]\nu(dv),
						\end{aligned}
					\end{equation*}
					where $\mathcal{K}(x,y)(v)$ is defined as in~\eqref{eq-Psi-0}:
					\begin{multline*}
						\begin{aligned}
							\mathcal{K}(x,y)(v)&=1-\E\Big[\indic{ v\leq\eta_{0}}\exp\left(-\int_{0}^{v}\gamma_{0}^I(r)y(r)dr\right)\Big]\\
							&\hspace{1cm}-\int_{0}^{v}\E\Big[\indic{v-s\leq\eta}\exp\left(-\int_{s}^{v}\gamma^I(r-s)y(r)dr\right)\Big]x(s)y(s)ds.
						\end{aligned}
					\end{multline*}
					Furthermore, by applying the identity, for $0\leq s\leq t:$
					\begin{equation}\label{eq-id-nu0}
						\int_{0}^{t}e^{-\nu((v,t])}\indic{v\leq\eta_0}\nu(dv)=\indic{t\leq\eta_0}+e^{-\nu((\eta_0,t])}\indic{t>\eta_0}-e^{-\nu((0,t])},
					\end{equation}
					and
					\begin{equation}\label{eq-id-nu}
						\int_{s}^{t}e^{-\nu((v,t])}\indic{v-s\leq\eta}\nu(dv)=\indic{t-s\leq\eta}+e^{-\nu((s+\eta,t])}\indic{t-s>\eta}-e^{-\nu((s,t])},
					\end{equation}
					we show that the expression for $x$ reduce to equation~\eqref{eq:F-G-x}.
					
					Following a similarly approach, by combining Lemma~\ref{Lem:exp-I-y},~\ref{lem:Inf} and~\ref{Lem:exp-II-y} with identity~\eqref{eq-id-nu0}-\eqref{eq-id-nu}, we show that $y$ is given by equation~\eqref{eq:F-G-y}.
					
					Consequently, the pair $(x,y)$ satisfies the system of equations~\eqref{eq:F-G-x}-\eqref{eq:F-G-y}. By the uniqueness of the solution established in Theorem~\ref{Th:exists}, the proof is complete.	
				\end{proof}
				\begin{remark}\label{RQ-bound}
					From Proposition~\ref{prop:equal-G-F} we derive that for all $t\geq0,\,\fF(t)\leq\lambda_*$ and $\fS(t)\leq1.$
				\end{remark}
				
				We next consider the sequence $(Q_k)_{k\geq1}$ and $(Q_k^V)_{k\geq1}$ of Poisson random measures introduced in Section~\ref{sec-model-descr} and for each $k\geq1,$ we define the process $\{A_k(t),\,t\geq0\}$: 
				\[A_k(t)=\int_{0}^{t}\int_{0}^{+\infty}\indic{u\leq\Upsilon_k(r^-)}Q_k(dr,du),\] where
				\[\Upsilon_k(t)=\gamma_{k,A_k(t),\mathcal{N}_t^k}(t)\overline{\mathfrak{F}}(t),\quad\mathcal{N}_t^k=Q^V_k([0,t]),\] 
				\begin{equation}
					\gamma_{k,A_k(t),\mathcal{N}_t^k}(t)=\gamma_{k,A_k(t)}^I(t-\tau_{k,A_k(t)})\indic{ T_{\mathcal{N}_t^k}^V-\tau_{k,A_k(t)}\leq\eta_{k,A_k(t)}}+\gamma_{k,\mathcal{N}^k_t}^V(t-\tau_{k,A_k(t)})\indic{T_{\mathcal{N}_t^k}^V-\tau_{k,A_k(t)}>\eta_{k,A_k(t)}}
				\end{equation}
				and 
				\begin{equation}
					\lambda_{k,A_k(t),\mathcal{N}_t^k}(t)=\lambda_{k,A_k(t)}^I(t-\tau_{k,A_k(t)})\indic{ T_{\mathcal{N}_t^k}^V-\tau_{k,A_k(t)}\leq\eta_{k,A_k(t)}},
				\end{equation}
				with $(\tau_{k,i})_i$ the jump times of process $A_k$.

				In this definition we use the same $(\lambda_{k,i}^I,\gamma_{k,i}^I,\gamma^V_{k,i},Q_k,Q^V_k)$ as in the definition of the model in Section~\ref{sec-model-descr}. 
				Moreover, since  $\left((\lambda_{k,i}^I)_i,(\gamma_{k,i}^I)_i,(\gamma^V_{k,i})_i,Q_k,Q^V_k\right)_{k\geq1}$ are i.i.d,\\ $\left(\lambda_{k,A_k(\cdot),\mathcal{N}_\cdot^k},\gamma_{k,A_k(\cdot),\mathcal{N}_\cdot^k},Q_k\right)_{k\geq1}$ are also i.i.d, and then $(A_k)_{k\geq1}$ are also i.i.d.

				Since the sequences $\left( (\lambda_{k, i, \mathcal{N}^k})_i, (\gamma_{k, i, \mathcal{N}^k})_i, Q_k \right)_{k \geq 1}$ are i.i.d., and given the uniform bounds $\lambda_{k, i, \mathcal{N}^k} \leq \lambda_*$ and $\gamma_{k, i, \mathcal{N}^k} \leq 1$ from Assumption~\ref{AS-lambda}, as well as $\fF \leq \lambda_*$ and $\fS \leq 1$ from Remark~\ref{RQ-bound}, we can apply the same computation as in \cite[Lemma~$6.2$]{forien-Zotsa2022stochastic} to obtain the following result.
				
				\begin{lemma}\label{lem_inq}Assumption~\ref{AS-lambda}, for $k\in\mathbb{N}$ and $T\geq0$, 
					\begin{equation*}
						\mathbb{E}\left[\sup_{t\in[0,T]}\left|A^N_k(t)-A_k(t)\right|\right]\leq\int_{0}^{T}\mathbb{E}\Big[\left|\Upsilon^N_k(t)-\Upsilon_k(t)\right|\Big]dt=:\delta^N(T)\label{eqA}
					\end{equation*}
					and 
					\begin{equation*}
						\mathbb{E}\left[\sup_{t\in[0,T]}\left|\tau^N_{k,A^N_k(t)}-\tau_{k,A_k(t)}\right|\right]\leq T\delta^N(T).
					\end{equation*}
					Moreover, 
					\begin{equation}\delta^N(T)\leq\frac{\lambda^*}{\sqrt{N}}T\exp(2\lambda^*T).\label{eqdelta}\end{equation}
				\end{lemma}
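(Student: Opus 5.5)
The plan is a pathwise coupling between $A^N_k$ and $A_k$, closed with Gronwall's lemma. The two processes are driven by the same Poisson random measure $Q_k$, the same vaccination measure $Q^V_k$ and the same sequences $(\lambda^I_{k,i},\gamma^I_{k,i})_i$, $(\gamma^V_{k,j})_j$. They can therefore only differ once a point of $Q_k$ falls between the two thresholds $\Upsilon^N_k(r^-)$ and $\Upsilon_k(r^-)$.

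\emph{Step 1: bound on $A^N_k-A_k$.} Pathwise,
\[
\sup_{t\le T}|A^N_k(t)-A_k(t)|\le\int_{[0,T]\times\R_+}\indic{\Upsilon^N_k(r^-)\wedge\Upsilon_k(r^-)<u\le\Upsilon^N_k(r^-)\vee\Upsilon_k(r^-)}\,Q_k(dr,du).
\]
Taking expectations, and using that the integrand is predictable and $Q_k$ has compensator $dr\,du$, gives the first inequality with $\delta^N(T)=\int_0^T\E|\Upsilon^N_k(t)-\Upsilon_k(t)|dt$.

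\emph{Step 2: bound on the infection times.} On the event where $A^N_k$ and $A_k$ agree on $[0,T]$, the infection times $\tau^N_{k,i}$ and $\tau_{k,i}$ coincide up to $T$. Off that event, both $\tau^N_{k,A^N_k(t)}$ and $\tau_{k,A_k(t)}$ lie in $[0,t]$, so their difference is at most $T$. Hence
\[
\sup_{t\le T}|\tau^N_{k,A^N_k(t)}-\tau_{k,A_k(t)}|\le T\,\indic{\exists s\le T:\,A^N_k(s)\ne A_k(s)}\le T\sup_{s\le T}|A^N_k(s)-A_k(s)|,
\]
and Step 1 yields the second inequality.

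\emph{Step 3: control of $\delta^N$.} I would decompose
\[
\Upsilon^N_k-\Upsilon_k=\gamma_{k,A^N_k,\mathcal N^k}(t)\big(\fF^N(t)-\fF(t)\big)+\fF(t)\big(\gamma_{k,A^N_k,\mathcal N^k}(t)-\gamma_{k,A_k,\mathcal N^k}(t)\big).
\]
The point to check carefully, and the one genuinely new relative to \cite[Lemma~6.2]{forien-Zotsa2022stochastic}, is the vaccination indicator $\indic{T^V_{k,\mathcal N^k_t}-\tau_{k,A(t)}\le\eta_{k,A(t)}}$. Both systems use the same $T^V_{k,\cdot}$, and $\eta_{k,i}$ is determined by the same $\lambda^I_{k,i}$. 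So whenever $A^N_k=A_k$ on $[0,t]$, the infection times and durations agree, and the current susceptibilities and infectivities coincide. Since they are bounded by $1$ and $\lambda_*$ respectively, this gives
\[
|\gamma^N_k(t)-\gamma_k(t)|\le\sup_{s\le t}|A^N_k(s)-A_k(s)|,
\qquad
|\lambda^N_k(t)-\lambda_k(t)|\le\lambda_*\sup_{s\le t}|A^N_k(s)-A_k(s)|.
\]
I then split
\[
\fF^N(t)-\fF(t)=\frac1N\sum_{\ell}\big(\lambda^N_\ell(t)-\lambda_\ell(t)\big)+\Big(\frac1N\sum_\ell\lambda_\ell(t)-\fF(t)\Big).
\]
For the first sum, the bound above together with exchangeability gives expectation at most $\lambda_*\delta^N(t)$. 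For the second term, Proposition~\ref{prop:equal-G-F} shows the $\lambda_\ell(t)=\lambda_{\ell,A_\ell(t),\mathcal N^\ell_t}(t)$ are i.i.d.\ with mean $\fF(t)$ and bounded by $\lambda_*$. Cauchy--Schwarz then bounds its expectation by $\lambda_*/\sqrt N$. Finally, using $\gamma\le1$ and $\fF\le\lambda_*$ (Remark~\ref{RQ-bound}),
\[
\E|\Upsilon^N_k(t)-\Upsilon_k(t)|\le\frac{\lambda_*}{\sqrt N}+2\lambda_*\delta^N(t).
\]
Integrating over $[0,T]$ and applying Gronwall's lemma gives $\delta^N(T)\le\frac{\lambda_*T}{\sqrt N}e^{2\lambda_*T}$, which is \eqref{eqdelta}.

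The main obstacle I expect is the coincidence claim in Step 3. One must verify that the eligibility indicator and the vaccine-drawn $\gamma^V_{k,\mathcal N^k_t}$ are identical functionals of the common randomness as long as the infection histories agree. Once this is done, the rest is the argument of \cite{forien-Zotsa2022stochastic}.
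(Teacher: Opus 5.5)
Your proposal is correct and takes essentially the same route as the paper, which simply invokes the computation of \cite[Lemma~6.2]{forien-Zotsa2022stochastic}. That computation consists of the same steps you use: the common-PRM coupling bound for $A^N_k-A_k$, the crude bound $T\,\indic{\exists s\le T:\,A^N_k(s)\neq A_k(s)}$ for the infection times, and the split of $\fF^N-\fF$ into a coupling error plus an i.i.d.\ fluctuation of order $\lambda_*/\sqrt N$, closed by Gronwall. Your explicit check is also the justification the paper leaves unstated for why that reduction still applies: the vaccination eligibility indicator and $\gamma^V_{k,\mathcal N^k_t}$ are identical functionals of the shared randomness $(Q^V_k,\lambda^I_{k,i},\gamma^V_{k,j})$ as long as the infection histories agree.
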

				From Lemma~\ref{lem_inq} or \cite[Lemma~$6.3$]{forien-Zotsa2022stochastic}, we also have the following Lemma.
				\begin{lemma}
					For $k\in\mathbb{N}$ and $T\geq0$ we have 
					\begin{equation}
						\E\left[\sup_{t\in[0,T]}\left|\gamma_{k,A_k^N(t),\mathcal{N}_t^k}(t)-\gamma_{k,A_k(t),\mathcal{N}_t^k}(t)\right|\right]\leq\frac{\lambda_*}{\sqrt{N}}T\exp(2\lambda_*T),\label{eqgam}
					\end{equation}
					\begin{equation}
						\E\left[\sup_{t\in[0,T]}\left|\lambda_{k,A_k^N(t),\mathcal{N}_t^k}(t)-\lambda_{k,A_k(t),\mathcal{N}_t^k}(t)\right|\right]\leq\frac{\lambda_*^{2}}{\sqrt{N}}T\exp(2\lambda_*T),\label{eqlam}
					\end{equation}
				\end{lemma}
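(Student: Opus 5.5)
The strategy is to show that the two processes coincide except on events where their infection histories differ, and then to bound the probability of those events with Lemma~\ref{lem_inq}. Fix $k$ and $T$. The processes $A^N_k$ and $A_k$ are driven by the same Poisson measure $Q_k$, the same vaccination measure $Q^V_k$, and the same sequences $(\lambda^I_{k,i},\gamma^I_{k,i},\gamma^V_{k,j})$. So $\gamma_{k,A^N_k(t),\mathcal N^k_t}(t)$ and $\gamma_{k,A_k(t),\mathcal N^k_t}(t)$ agree whenever the index of the most recent infection and its time agree, that is, whenever $A^N_k(t)=A_k(t)$ and $\tau^N_{k,A^N_k(t)}=\tau_{k,A_k(t)}$.

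The first step is to introduce the event
\[
\Omega^N_{k,T}=\Big\{\sup_{t\in[0,T]}|A^N_k(t)-A_k(t)|\ge 1\Big\}.
\]
Both processes are constructed by induction on the jumps of the same measure $Q_k$, and they share the same initial state. Hence on the complement of $\Omega^N_{k,T}$ the two processes have the same jump times on $[0,T]$. This holds because both are integer valued, start at $0$, and have unit jumps; if they never differ on $[0,T]$, their jumps occur at the same atoms of $Q_k$.

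The second step uses this identification. On the complement of $\Omega^N_{k,T}$, the infection indices and infection times coincide. The vaccination data $T^V_{k,\mathcal N^k_t}$ and $\mathcal N^k_t$ are common to both processes, and the drawn functions $\lambda^I_{k,i}$, $\gamma^I_{k,i}$, $\gamma^V_{k,j}$ are the same, so the vaccination indicators coincide as well. Therefore both differences in \eqref{eqgam}--\eqref{eqlam} vanish identically on $[0,T]$.

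The third step handles the exceptional event. There the differences are bounded by the a.s.\ bounds of Assumption~\ref{AS-lambda}: by $1$ for $\gamma$ and by $\lambda_*$ for $\lambda$. This gives
\[
\E\Big[\sup_{t\le T}|\gamma_{k,A^N_k(t),\mathcal N^k_t}(t)-\gamma_{k,A_k(t),\mathcal N^k_t}(t)|\Big]\le \P(\Omega^N_{k,T}),
\]
and the same estimate multiplied by $\lambda_*$ for the $\lambda$ difference. Markov's inequality together with integer-valuedness gives
\[
\P(\Omega^N_{k,T})\le \E\big[\sup_{t\le T}|A^N_k(t)-A_k(t)|\big]\le\delta^N(T).
\]
Lemma~\ref{lem_inq} then bounds $\delta^N(T)$ by \eqref{eqdelta}, which yields \eqref{eqgam} directly and \eqref{eqlam} with the extra factor $\lambda_*$.

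The main obstacle is the pathwise claim of the first step: that agreement of the counting processes on $[0,T]$ forces agreement of all subsequent state, including the vaccination indicators. This requires the inductive construction, in which each jump of $A^N_k$ or $A_k$ is an atom of $Q_k$ lying below the respective rate. I would argue by induction on successive atoms of $Q_k$, showing that as long as the histories agree the states agree. One caveat concerns the displayed definition of $\gamma_{k,A_k(t),\mathcal N^k_t}$, which uses $t-\tau_{k,A_k(t)}$ rather than $t-T^V$ in the vaccine term. I would treat this as a typo and work with the model definition; either convention is harmless for the argument, since it only needs the two expressions to be the same function of the common history.
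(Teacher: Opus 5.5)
Your proof is correct and matches the paper's. The paper gives no separate argument: it derives this lemma from Lemma~\ref{lem_inq}, as in Lemma~6.3 of \cite{forien-Zotsa2022stochastic}, using exactly your route of bounding each difference by $1$ (resp.\ $\lambda_*$) times $\indic{\sup_{t\le T}|A^N_k(t)-A_k(t)|\ge 1}$ and then applying Markov's inequality and \eqref{eqdelta}. The step you flag as the main obstacle is in fact immediate: two counting paths that coincide on $[0,T]$ have the same jump times there, and everything else ($Q^V_k$, $\eta_{k,i}$, $\gamma^I_{k,i}$, $\lambda^I_{k,i}$, $\gamma^V_{k,j}$) is shared by construction, provided both systems use the same vaccine-age convention.
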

				
				\medskip	
				\begin{proof}[Completing the proof of Theorem~\ref{Th:FLLN}]
					For $t\geq0$, we have
					\begin{align*}
						\overline{\mathfrak{F}}^N(t)&=\frac{1}{N}\sum_{k=1}^{N}\lambda_{k,A_k^N(t),\mathcal{N}_t^k}(t)\\
						&=\frac{1}{N}\sum_{k=1}^{N}\left(\lambda_{k,A_k^N(t),\mathcal{N}_t^k}(t)-\lambda_{k,A_k(t),\mathcal{N}_t^k}(t)\right)+\frac{1}{N}\sum_{k=1}^{N}\lambda_{k,A_k(t),\mathcal{N}_t^k}(t),
					\end{align*}
					and 
					\begin{align*}
						\overline{\mathfrak{S}}^N(t)&=\frac{1}{N}\sum_{k=1}^{N}\gamma_{k,A_k^N(t),\mathcal{N}_t^k}(t)\\
						&=\frac{1}{N}\sum_{k=1}^{N}\left(\gamma_{k,A_k^N(t),\mathcal{N}_t^k}(t)-\gamma_{k,A_k(t),\mathcal{N}_t^k}(t)\right)+\frac{1}{N}\sum_{k=1}^{N}\gamma_{k,A_k(t),\mathcal{N}_t^k}(t).
					\end{align*}
					From \eqref{eqgam} and \eqref{eqlam}, we have 
					\begin{equation*}\left\{
						\begin{aligned}
							\E\left[\frac{1}{N}\sum_{k=1}^{N}\sup_{t \in [0,T]}\left|\lambda_{k,A_k^N(t),\mathcal{N}_t^k}(t)-\lambda_{k,A_k(t),\mathcal{N}_t^k}(t)\right|\right]\leq\frac{\lambda^{*2}}{\sqrt{N}}T\exp(2\lambda^*T),\\
							\E\left[\frac{1}{N}\sum_{k=1}^{N}\sup_{t \in [0,T]}\left|\gamma_{k,A_k^N(t),\mathcal{N}_t^k}(t)-\gamma_{k,A_k(t),\mathcal{N}_t^k}(t)\right|\right]\leq\frac{\lambda^{*}}{\sqrt{N}}T\exp(2\lambda^*T).
						\end{aligned}\right.
					\end{equation*}
					Hence, 
					\begin{multline*}
						\left(\frac{1}{N}\sum_{k=1}^{N}\left(\gamma_{k,A_k^N(t),\mathcal{N}_t^k}(t)-\gamma_{k,A_k(t),\mathcal{N}_t^k}(t)\right), \right. \\ \left. \frac{1}{N}\sum_{k=1}^{N}\left(\lambda_{k,A_k^N(t),\mathcal{N}_t^k}(t)-\lambda_{k,A_k(t),\mathcal{N}_t^k}(t)\right)\right) \xrightarrow[N\to+\infty]{}(0,0)
					\end{multline*}
					locally uniformly in $t$. 
					
					Moreover, as $\left(\gamma_{k,A_k(\cdot),\mathcal{N}_\cdot^k}(\cdot),\lambda_{k,A_k(\cdot),\mathcal{N}_\cdot^k}(\cdot)\right)_k$ is a collection of i.i.d. random variables in $D^2$, by the law of large numbers in $D^2$ \cite[Theorem~$1$]{rao1963law},
					\begin{multline*}\left(\frac{1}{N}\sum_{k=1}^{N}\gamma_{k,A_k(\cdot),\mathcal{N}_\cdot^k}(\cdot),\frac{1}{N}\sum_{k=1}^{N}\lambda_{k,A_k(\cdot),\mathcal{N}_\cdot^k}(\cdot)\right)\\\xrightarrow[N\to+\infty]{\mathbb{P}}\left(\E\left[\gamma_{1,A_1(\cdot),\mathcal{N}_\cdot^1}(\cdot)\right],\E\left[\lambda_{1,A_1(\cdot),\mathcal{N}_\cdot^1}(\cdot)\right]\right)\text{ in }D^2.\end{multline*}
					
					In view of Proposition~\ref{prop:equal-G-F}, the proof of the convergence of the pair $(\fS^N,\fF^N)$ is now complete. To conclude the proof of
					Theorem~\ref{Th:FLLN}, it remains to establish the convergence of $\mathfrak{U}^N(\varphi)$. Since the random function $\varphi$ has the
					same properties as the function $\gamma$ or $\lambda$, the proof of convergence of $\mathfrak{U}^N(\varphi)$ is easily deduced from that of $\fS^N$ or $\fF^N$.

				\end{proof}

				\section{Proof for the Longtime behavior}\label{sec-LT}
				\subsection{Proof of Theorem~\ref{Th-LT-free}}\label{sec-LT-th-1}
				\subsubsection{proof of Theorem~\ref{Th-LT-free}\ref{Th-lbda-gamma-i}}
				We recall from the system of equations~\eqref{eq-inf-F-sf-S-no-reinf}-\eqref{eq-inf-F-sf-F-no-reinf}, we have for $t\geq0,$
				\begin{equation*}
					\fF(t)=
					\E\left[\lambda_{0}^I(t)\right]+\int_{0}^{t}\E\left[\lambda^I(t-s)\right]\fS(s)\fF(s)ds.							
				\end{equation*}
				This implies that,
				\begin{align}
					\int_{0}^{t}\fF(u)du&=\int_{0}^{t}\E\left[\lambda_0(u)\right]du+\int_{0}^{t}\left(\int_{0}^{t-s}\E\left[\lambda(u)\right]du\right)\fS(s)\fF(s)ds\label{equiv-end-F-inf}\\
					&\leq \int_{0}^{t}\E\left[\lambda_0(u)\right]du+R_0\int_{0}^{t}\fF(s)ds,\non
				\end{align}
				where the last line follows from the fact that, $\fS(s)\leq1$.
				
				Hence since $R_0<1$, we obtain,
				\[\int_{0}^{t}\fF(u)du\leq\frac{1}{1-R_0}\int_{0}^{t}\E\left[\lambda_0(u)\right]du.\]
				From Assumption~\ref{ass-lbda-gamma}, it follows that,
				\[\int_{0}^{\infty}\fF(u)du<\infty.\]
				As a result, since $\fS(t)\leq1,\,\lambda^I(t)\leq\lambda_*,\E\left[\lambda^I_0(t)\right]\to0$ and $\E\left[\lambda^I(t)\right]\to0$ as $t\to\infty,$ by dominated convergence theorem, we conclude that,
				\begin{equation*}
					\lim_{t\to\infty}\fF(t)=
					\lim_{t\to\infty}\int_{0}^{t}\E\left[\lambda^I(t-s)\right]\fS(s)\fF(s)ds=0.							
				\end{equation*}
				\subsubsection{proof of Theorem~\ref{Th-LT-free}\ref{Th-lbda-gamma-ii}}
				We now establish the proof for general case. We adapt the proof in \cite{forien-Zotsa2022stochastic}. This part is proved in two cases: $\mathbb{P}(\gamma_*^I=0)>0$ and $\mathbb{P}(\gamma_*^I=0)=0.$
				\subsubsection*{Case 1:$\mathbb{P}(\gamma_*^I=0)>0$}
				Using the fact that
				\[\int_{s}^{t}e^{-\nu((s+\eta,a])}\indic{a-s>\eta}\nu(da)=\indic{t-s>\eta}\left(1-e^{-\nu((s+\eta,t])}\right),\]
				we derive that,
				\begin{align*}
					\mathbb{P}(\gamma_*^I=0)\int_{0}^{t}\fF(s)\fS(s)ds&=	\int_{0}^{t}\E\left[\indic{t-s\leq\eta}\indic{\gamma_*^I=0}\right]\fF(s)\fS(s)ds\\
					&\hspace{1cm}+\int_{0}^{t}\E\left[e^{-\nu((s+\eta,t])}\indic{t-s>\eta}\indic{\gamma_*^I=0}\right]\fS(s)\fF(s)ds\\
					&\hspace{1cm}+\int_{0}^{t}\int_{s}^{t}\E\left[e^{-\nu((s+\eta,a])}\indic{a-s>\eta}\indic{\gamma_*^I=0}\right]\nu(da)\fS(s)\fF(s)ds\\
					&\leq1.
				\end{align*}
				where the last line follows from Proposition~\ref{Lem:mass-conserv} and the fact that $\gamma_*^I=0$ implies $\gamma^I(t)=0$ for all $t$. 
				
				Hence, for all $t\geq0,$
				\[\int_{0}^{t}\fF(s)\fS(s)ds\leq\frac{1}{\mathbb{P}(\gamma_*^I=0)}.\]
				Therefore
				\[\int_{0}^{\infty}\fF(s)\fS(s)ds<\infty.\]
				As a result, since $\lambda^I(t)\leq\lambda_*,\E\left[\lambda^I_0(t)\right]\to0$ and $\E\left[\lambda^I(t)\right]\to0$ as $t\to\infty,$ by dominated convergence theorem, we conclude that, as $t\to\infty,\,\fF(t)\to0$.
				
				\subsubsection*{Case 2:$\mathbb{P}(\gamma_*^I=0)=0$}As in \cite{forien-Zotsa2022stochastic} from \eqref{equiv-end-F-inf} and Assumption~\ref{ass-lbda-gamma}\ref{ass-lbda-gamma-ii}, we first note that,
				\begin{equation}\label{eq_equil1-a}
					\int_{0}^{\infty}\fF(s)ds<\infty\Longleftrightarrow \int_{0}^{\infty}\fF(s)\fS(s)ds<\infty.
				\end{equation}
				Thus from the proof of Case 1 and \eqref{eq_equil1-a}, it suffices to show that  $\int_{0}^{+\infty}\overline{\mathfrak{F}}(s)ds<+\infty$.
				We prove this claim by contradiction. Suppose that 
				\begin{equation}
					\int_{0}^{+\infty}\overline{\mathfrak{F}}(u)du=+\infty.\label{eq_lim_F}
				\end{equation}
				From  \eqref{equiv-end-F-inf}, using Fubuni's theorem, we obtain 
				\begin{align*}\label{eq_IF}
					\int_{0}^{t}\overline{\mathfrak{F}}(u)du
					&=\overline{I}(0)\int_{0}^{t}\overline{\lambda}_0(u)du+R_0\int_{0}^{t}\overline{\mathfrak{S}}(u)\overline{\mathfrak F}(u)du-\int_{0}^{t}\left(\int_{t-u}^{+\infty}\overline{\lambda}(s)ds\right)\overline{\mathfrak{S}}(u)\overline{\mathfrak F}(u)du.
				\end{align*}
				Consequently, 
				\begin{equation}\label{frac_lim}
					\frac{\int_{0}^{t}\overline{\mathfrak{S}}(u)\overline{\mathfrak F}(u)du}{\int_{0}^{t}\overline{\mathfrak{F}}(u)du}=\frac{1}{R_0}+\frac{\int_{0}^{t}\left(\int_{t-u}^{+\infty}\overline{\lambda}(s)ds\right)\overline{\mathfrak{S}}(u)\overline{\mathfrak F}(u)du}{R_0\int_{0}^{t}\overline{\mathfrak{F}}(u)du}-\frac{\overline{I}(0)\int_{0}^{t}\overline{\lambda}_0(u)du}{R_0\int_{0}^{t}\overline{\mathfrak{F}}(u)du}.
				\end{equation}
				From Assumption~\ref{ass-lbda-gamma}\ref{ass-lbda-gamma-ii} and \eqref{eq_lim_F}, we have
				\begin{equation}\label{eq_F_1}
					\frac{\int_{0}^{t}\overline{\lambda}_0(u)du}{\int_{0}^{t}\overline{\mathfrak{F}}(u)du}\xrightarrow[t\to+\infty]{}0.
				\end{equation}
				In addition, since $\int_{t}^{+\infty}\overline{\lambda}(s)ds\to0$ as $t\to+\infty$,  for $\epsilon>0$ there exists $T_\epsilon>0$ such that $\int_{T_\epsilon}^{+\infty}\overline{\lambda}(s)ds < \epsilon$. Hence, for $t\geq T_\epsilon$,
				\begin{align*}
					& \int_{0}^{t}\left(\int_{t-u}^{+\infty}\overline{\lambda}(s)ds\right)\overline{\mathfrak{S}}(u)\overline{\mathfrak F}(u)du \\
					&=\int_{0}^{t}\left(\int_{u}^{+\infty}\overline{\lambda}(s)ds\right)\overline{\mathfrak{S}}(t-u)\overline{\mathfrak F}(t-u)du\\
					&\leq\int_{0}^{T_\epsilon}\left(\int_{u}^{+\infty}\overline{\lambda}(s)ds\right)\overline{\mathfrak{S}}(t-u)\overline{\mathfrak F}(t-u)du+\epsilon\int_{T_\epsilon}^{t}\overline{\mathfrak{S}}(t-u)\overline{\mathfrak F}(t-u)du\\
					&\leq R_0\lambda_* T_\epsilon+\epsilon\int_{0}^{t}\overline{\mathfrak F}(u)du.
				\end{align*}
				Thus by \eqref{eq_lim_F}, we have 
				\begin{equation}\label{eq_F_2}
					\frac{\int_{0}^{t}\left(\int_{t-u}^{+\infty}\overline{\lambda}(s)ds\right)\overline{\mathfrak{S}}(u)\overline{\mathfrak F}(u)du}{\int_{0}^{t}\overline{\mathfrak{F}}(u)du}\xrightarrow[t\to+\infty]{}0.
				\end{equation}
				Hence under the assumption \eqref{eq_lim_F}, from \eqref{frac_lim}, \eqref{eq_F_1} and \eqref{eq_F_2}, we obtain 
				\begin{equation}
					\frac{\int_{0}^{t}\overline{\mathfrak{S}}(u)\overline{\mathfrak F}(u)du}{\int_{0}^{t}\overline{\mathfrak{F}}(u)du}\xrightarrow[t\to+\infty]{}\frac{1}{R_0}.\label{eq_frac_lim}
				\end{equation}
				On the other hand, from Proposition~\ref{Lem:mass-conserv} and the fact that $\gamma^I\leq\gamma_*^I$ we have
				\begin{multline}\label{eq_in_G}
					\int_{0}^{u} \E \left[\indic{u-s\leq\eta} \exp \left( - \gamma_\ast^I\int_{s}^{u}\overline{\mathfrak F}(r) dr \right) \right] \overline{\mathfrak S}(s) \overline{\mathfrak F}(s) ds\\
					+\int_{0}^{u}\E\left[e^{-\nu((s+\eta,u])}\indic{u-s\geq\eta}\exp\left(-\gamma_*^I\int_{s}^{u}\fF(r)dr\right)\right]\fF(s)\fS(s)ds<1.
				\end{multline}
				Next, multiplying by $\overline{\mathfrak F}(u)$ and integrating from $0$ to $t$ both sides of \eqref{eq_in_G},   we have
				\begin{multline*}
					\int_{0}^{t}\left(\int_{0}^{u} \E \left[ \indic{u-s\leq\eta}\overline{\mathfrak F}(u)\exp \left( - \gamma_\ast^I\int_{s}^{u}\overline{\mathfrak F}(r) dr \right) \right] \overline{\mathfrak S}(s) \overline{\mathfrak F}(s) ds\right)du\\
					+\int_{0}^{t}\left(\int_{0}^{u}\E\left[e^{-\nu((s+\eta,u])}\indic{u-s\geq\eta}\fF(u)\exp\left(-\gamma_*^I\int_{s}^{u}\fF(r)dr\right)\right]\fF(s)\fS(s)ds\right)du<\int_{0}^{t}\overline{\mathfrak F}(u)du,
				\end{multline*}
				and by Fubuni's theorem,
				\begin{multline*}
					\int_{0}^{t}\E \left[ \int_{s}^{t}\indic{u-s\leq\eta}\overline{\mathfrak F}(u)\exp \left( - \gamma_\ast^I\int_{s}^{u}\overline{\mathfrak F}(r) dr \right) du\right]\overline{\mathfrak S}(s) \overline{\mathfrak F}(s)ds\\
					+\int_{0}^{t}\E\left[\int_{s}^{t}e^{-\nu((s+\eta,u])}\indic{u-s\geq\eta}\fF(u)\exp\left(-\gamma_*^I\int_{s}^{u}\fF(r)dr\right)du\right]\fF(s)\fS(s)ds<\int_{0}^{t}\overline{\mathfrak F}(u)du,
				\end{multline*}
				Hence for $0<\epsilon\leq1$, since $\indic{\gamma_\ast\geq\epsilon}\leq1$,
				\begin{multline*}
					\int_{0}^{t}\E \left[\indic{\gamma_*^I\geq\epsilon} \int_{s}^{t}\indic{u-s\leq\eta}\overline{\mathfrak F}(u)\exp \left( - \gamma_\ast^I\int_{s}^{u}\overline{\mathfrak F}(r) dr \right) du\right]\overline{\mathfrak S}(s) \overline{\mathfrak F}(s)ds\\
					+\int_{0}^{t}\E\left[\indic{\gamma_*^I\geq\epsilon}\int_{s}^{t}e^{-\nu((s+\eta,u])}\indic{u-s\geq\eta}\fF(u)\exp\left(-\gamma_*^I\int_{s}^{u}\fF(r)dr\right)du\right]\fF(s)\fS(s)ds<\int_{0}^{t}\overline{\mathfrak F}(u)du,
				\end{multline*}
				from which, by integration by parts, we obtain 
				\begin{multline*}
					\E\left[\frac{1}{\gamma_*^I}\indic{\gamma_*^I\geq\epsilon}\right]\int_{0}^{t}\fF(s)\fS(s)ds-\int_{0}^{t}\E \left[\frac{1}{\gamma_*^I}\indic{\gamma_*^I\geq\epsilon}\indic{t-s\leq\eta}\exp \left( - \gamma_\ast^I\int_{s}^{t}\overline{\mathfrak F}(r) dr \right)\right]\overline{\mathfrak S}(s) \overline{\mathfrak F}(s)ds\\
					-\int_{0}^{t}\E \left[\frac{1}{\gamma_*^I}\indic{\gamma_*^I\geq\epsilon}e^{-\nu((s+\eta,t])}\indic{t-s>\eta}\exp \left( - \gamma_\ast^I\int_{s}^{t}\overline{\mathfrak F}(r) dr \right)\right]\overline{\mathfrak S}(s) \overline{\mathfrak F}(s)ds\\
					-\int_{0}^{t}\int_{s}^{t}\E \left[\frac{1}{\gamma_*^I}\indic{\gamma_*^I\geq\epsilon}e^{-\nu((s+\eta,u])}\indic{u-s>\eta}\exp \left( - \gamma_\ast^I\int_{s}^{u}\overline{\mathfrak F}(r) dr \right)\right]\nu(du)\overline{\mathfrak S}(s) \overline{\mathfrak F}(s)ds<\int_{0}^{t}\overline{\mathfrak F}(u)du.
				\end{multline*}
				Thus,
				\begin{multline}\label{eq_d}
					\E \left[\frac{1}{\gamma_*^I}\indic{\gamma_\ast^I\geq\epsilon}\right]\int_{0}^{t}\overline{\mathfrak S}(s) \overline{\mathfrak F}(s)ds<\int_{0}^{t}\overline{\mathfrak F}(u)du\\+\int_{0}^{t}\E \left[\frac{1}{\gamma_*^I}\indic{\gamma_*^I\geq\epsilon}\indic{t-s\leq\eta}\exp \left( - \gamma_\ast^I\int_{s}^{t}\overline{\mathfrak F}(r) dr \right)\right]\overline{\mathfrak S}(s) \overline{\mathfrak F}(s)ds
					\\+\int_{0}^{t}\E \left[\frac{1}{\gamma_*^I}\indic{\gamma_*^I\geq\epsilon}e^{-\nu((s+\eta,t])}\indic{t-s>\eta}\exp \left( - \gamma_\ast^I\int_{s}^{s+\eta}\overline{\mathfrak F}(r) dr \right)\right]\overline{\mathfrak S}(s) \overline{\mathfrak F}(s)ds\\
					+\int_{0}^{t}\int_{s}^{t}\E \left[\frac{1}{\gamma_*^I}\indic{\gamma_*^I\geq\epsilon}e^{-\nu((s+\eta,u])}\indic{u-s>\eta}\exp \left( - \gamma_\ast^I\int_{s}^{u}\overline{\mathfrak F}(r) dr \right)\right]\nu(du)\overline{\mathfrak S}(s) \overline{\mathfrak F}(s)ds
				\end{multline}
				Moreover, from Proposition~\ref{Lem:mass-conserv} using $\gamma^I\leq\gamma_*^I,$ as in \eqref{eq_in_G}, we obtain,
				\begin{multline*}
					\int_{0}^{t}\E \left[\frac{1}{\gamma_*^I}\indic{\gamma_*^I\geq\epsilon}\indic{t-s\leq\eta}\exp \left( - \gamma_\ast^I\int_{s}^{t}\overline{\mathfrak F}(r) dr \right)\right]\overline{\mathfrak S}(s) \overline{\mathfrak F}(s)ds
					\\+\int_{0}^{t}\E \left[\frac{1}{\gamma_*^I}\indic{\gamma_*^I\geq\epsilon}e^{-\nu((s+\eta,t])}\indic{t-s>\eta}\exp \left( - \gamma_\ast^I\int_{s}^{s+\eta}\overline{\mathfrak F}(r) dr \right)\right]\overline{\mathfrak S}(s) \overline{\mathfrak F}(s)ds\\
					+\int_{0}^{t}\int_{s}^{t}\E \left[\frac{1}{\gamma_*^I}\indic{\gamma_*^I\geq\epsilon}e^{-\nu((s+\eta,u])}\indic{u-s>\eta}\exp \left( - \gamma_\ast^I\int_{s}^{u}\overline{\mathfrak F}(r) dr \right)\right]\nu(du)\overline{\mathfrak S}(s) \overline{\mathfrak F}(s)ds\leq\frac{1}{\epsilon}
				\end{multline*}
				Consequently, under the assumption \eqref{eq_lim_F}, we have
				\begin{multline*}
					\frac{\int_{0}^{t}\E \left[\frac{1}{\gamma_*^I}\indic{\gamma_*^I\geq\epsilon}\indic{t-s\leq\eta}\exp \left( - \gamma_\ast^I\int_{s}^{t}\overline{\mathfrak F}(r) dr \right)\right]\overline{\mathfrak S}(s) \overline{\mathfrak F}(s)ds}{\int_{0}^{t}\overline{\mathfrak F}(u)du}
					\\+\frac{\int_{0}^{t}\E \left[\frac{1}{\gamma_*^I}\indic{\gamma_*^I\geq\epsilon}e^{-\nu((s+\eta,t])}\indic{t-s>\eta}\exp \left( - \gamma_\ast^I\int_{s}^{s+\eta}\overline{\mathfrak F}(r) dr \right)\right]\overline{\mathfrak S}(s) \overline{\mathfrak F}(s)ds}{\int_{0}^{t}\overline{\mathfrak F}(u)du}\\
					+\frac{\int_{0}^{t}\int_{s}^{t}\E \left[\frac{1}{\gamma_*^I}\indic{\gamma_*^I\geq\epsilon}e^{-\nu((s+\eta,u])}\indic{u-s>\eta}\exp \left( - \gamma_\ast^I\int_{s}^{u}\overline{\mathfrak F}(r) dr \right)\right]\nu(du)\overline{\mathfrak S}(s) \overline{\mathfrak F}(s)ds}{\int_{0}^{t}\overline{\mathfrak F}(u)du}\xrightarrow[t\to+\infty]{}0,
				\end{multline*}
				This implies that, by \eqref{eq_d}, for all $0<\epsilon\leq1$, 
				\begin{equation*}
					\limsup_{t\to+\infty}\frac{\int_{0}^{t}\overline{\mathfrak{S}}(u)\overline{\mathfrak F}(u)du}{\int_{0}^{t}\overline{\mathfrak{F}}(u)du}\leq\left(\E\left[\frac{1}{\gamma_\ast^I}\indic{\gamma_\ast^I\geq\epsilon}\right]\right)^{-1}. 
				\end{equation*}
				Since  $\mathbb{P}\left(\gamma_\ast^I=0\right)=0,$ we deduce by the monotone convergence theorem that
				\begin{equation*}
					\limsup_{t\to+\infty}\frac{\int_{0}^{t}\overline{\mathfrak{S}}(u)\overline{\mathfrak F}(u)du}{\int_{0}^{t}\overline{\mathfrak{F}}(u)du}\leq\left(\E\left[\frac{1}{\gamma_\ast^I}\right]\right)^{-1},
				\end{equation*}
				However, this contradicts \eqref{eq_frac_lim} since $R_0<\E\left[\frac{1}{\gamma_\ast^I}\right]$ by the assumption of Theorem~\ref{Th-LT-free}\ref{Th-lbda-gamma-ii}. 
				
				This completes the proof of the second case.
				
				\subsection{Proof of Theorem~\ref{end-th-persistence}}\label{sec-end-th-persistence}
				The goal of Theorem~\ref{end-th-persistence} is to prove that, if $\gamma_\ast$ is deterministic and positive, $R_0>\frac{1}{\gamma_\ast}$, the functions $\gamma^I_0$, $\gamma^I$ and $\gamma^V$ are non-decreasing and if there exists a positive decreasing function $h$ such that for all $0\leq s, t,\,\overline{\lambda}(t+s)\geq h(t)\overline{\lambda}(s),$ and $\overline{\mathfrak F}(0)>0,$ then there exists $c>0$ such that for all $t\geq0,\,\overline{\mathfrak F}(t)\geq c$. We adapt the proof of \cite{forien-Zotsa2022stochastic}.
				
				Let $\delta>0$ be such that $(1-\delta)\gamma_\ast>\frac{1}{R_0}$.
				From Assumption~\ref{ASS2}, there exists $s_1\geq0$ deterministic such that $\gamma_{0}^I(s_1)\wedge\gamma^I(s_1)\wedge\gamma^V(s_1)\geq(1-\delta)\gamma_\ast$ a.s.
				Let $\epsilon>0$ be such that $(1-\delta)\gamma_\ast>\frac{1+\epsilon}{R_0}$.
				Let $x$ be the solution of the following Volterra equation
				\begin{equation}\label{V1}
					x(t)= h(s_1+t)+(1+\epsilon)\int_{0}^{t}p(t-s)x(s)ds,\;t\geq0, 
				\end{equation}
				with
				\begin{align*}
					p(t)=\frac{\overline{\lambda}(t)}{R_0}.
				\end{align*}
				As 
				\[(1+\epsilon)\int_{0}^{+\infty}p(t)dt=1+\epsilon>1,\qquad\int_{0}^{+\infty}h(t)dt<+\infty\] where the integrability of $h$ results from Assumption~\ref{ASS3} and the integrability of $\overline\lambda$. Moreover, since $h$ and $p$ are bounded and non-negative, by \cite[Remark following Theorem~$4$, page~$253$]{feller2015integral}, $x(t)\to+\infty$ as $t\to+\infty$, hence there exists $s_2\geq0$ such that $x(s_2)>2$.
				
				Let $c_2>0$ be such that 
				\begin{equation}\label{inR_0}
					(1-\delta)c_3\gamma_\ast \exp(-c_2(s_1+s_2))\geq\frac{1+\epsilon}{R_0},
				\end{equation}
				for some deterministic constant $0< c_3<1$ to be chosen letter.
				
				Let \[c_1=\frac{1}{2}\min(c_2,\overline{\mathfrak F}(0)),\, \quad \text{ and } \quad c_0=\frac{c_1}{2}h(s_1+s_2),\]
				\[t_0=\inf\{t\geq0,\,\overline{\mathfrak F}(t)\leq c_0\},\quad \text{ and } \quad t_1=\sup\{t\leq t_0,\,\overline{\mathfrak F}(t)\geq c_1\}.\]  
				Since $h$ is decreasing and $h(0)=1$ we have $c_0<c_1$.
				We want to show that $t_0=\infty$, and will prove it by contradiction. 
				Let us thus suppose that $t_0<+\infty$, which implies that $t_1<+\infty$. From Assumption~\ref{ASS3}, by the continuity of $\overline{\mathfrak F}$ and the definition of $t_1$, for all $t\geq t_1$, we obtain
				\begin{align*}
					\overline{\mathfrak F}(t)&=\overline{\lambda}_0(t)\overline{I}(0)+\int_{0}^{t}\overline{\lambda}(t-s)\overline{\mathfrak F}(s)\overline{\mathfrak S}(s)ds\\
					&\geq\overline{\lambda}_0(t-t_1+t_1)\overline{I}(0)+\int_{0}^{t_1}\overline{\lambda}(t-t_1+t_1-s)\overline{\mathfrak F}(s)\overline{\mathfrak S}(s)ds\\
					&\geq h(t-t_1)\left(\overline{\lambda}_0(t_1)\overline{I}(0)+\int_{0}^{t_1}\overline{\lambda}(t_1-s)\overline{\mathfrak F}(s)\overline{\mathfrak S}(s)ds\right)\\
					&= h(t-t_1)\overline{\mathfrak F}(t_1)\\
					&\geq c_1 h(t-t_1). 
				\end{align*}
				The definition of $t_0$ and the continuity of $\overline{\mathfrak F},$ implies that $\overline{\mathfrak F}(t_0)\leq c_0.$ Combining with the last inequality evaluated at $t=t_0$, we have $c_0\geq c_1 h(t_0-t_1)$. Hence, by the definition of $c_0$ and the fact that $h$ is decreasing, we deduce that $t_0-t_1> s_1+s_2$. So $t_0>t_1+s_1+s_2$ and for all $t\in[t_1,t_0]$
				\begin{equation}\overline{\mathfrak F}(t)\leq c_1< c_2.\label{e-q}\end{equation}
				
				On the other hand, as $\gamma_0^I(t)\leq1$, $\gamma^I(t)\leq1$ and $\gamma^V(t)\leq1$,  we have, for all $t\geq t_1$,
				\begin{align*}
					\overline{\mathfrak S}(t)&=\E\left[\gamma_{0}^I(t)e^{-\nu((\eta_0,t])}\indic{t>\eta_0}\exp\left(-\int_{0}^{t}\gamma_{0}^I(r)\overline{\mathfrak F}(r)dr\right)\right] \\
					& \hspace{1.5cm} +\int_{0}^{t}\E\left[\gamma^I(t-s)e^{-\nu((s+\eta,t])}\indic{t-s>\eta}\exp\left(-\int_{s}^{t}\gamma^I(r-s)\overline{\mathfrak F}(r)dr\right)\right]\overline{\mathfrak F}(s)\overline{\mathfrak S}(s)ds\\
					&\hspace{1cm}+\int_{0}^{t}\big(1-\bar{I}(v)\big)e^{-\nu((v,t])}\E\left[\gamma^V(t-v)\exp\left(-\int_{v}^{t}\gamma^V(r-v)\fF(r)dr\right)\right]\nu(dv)\\
					&\geq e^{-\nu((t_1,t])}\exp\left(-\int_{t_1}^{t}\overline{\mathfrak F}(r)dr\right)\Bigg(\E\left[\gamma_{0}^I(t)e^{-\nu((\eta_0,t_1])}\indic{t_1>\eta_0}\exp\left(-\int_{0}^{t_1}\gamma_{0}^I(r)\overline{\mathfrak F}(r)dr\right)\right]\\
					&\hspace{1.5cm}+\int_{0}^{t_1}\E\left[\gamma^I(t-s)e^{-\nu((s+\eta,t_1])}\indic{t_1-s>\eta}\exp\left(-\int_{s}^{t_1}\gamma^I(r-s)\overline{\mathfrak F}(r)dr\right)\right]\overline{\mathfrak F}(s)\overline{\mathfrak S}(s)ds\\
					&\hspace{1cm}+\int_{0}^{t_1}\big(1-\bar{I}(v)\big)e^{-\nu((v,t_1])}\E\left[\gamma^V(t-v)\exp\left(-\int_{v}^{t_1}\gamma^V(r-v)\fF(r)dr\right)\right]\nu(dv)\Bigg).
				\end{align*} 
				But, as for $t\in[t_1+s_1,t_1+s_1+s_2]$ and $s\in[0,t_1],$
				\[\gamma_{0}^I(t)\wedge\gamma^I(t-s)\wedge\gamma^V(t-s)\geq\gamma_{0}^I(s_1)\wedge\gamma^I(s_1)\wedge\gamma^V(s_1)\geq(1-\delta)\gamma_\ast,\] we deduce that, for all $t\in[t_1+s_1,t_1+s_1+s_2]$,
				
				\begin{align*}
					\overline{\mathfrak{S}}(t)&\geq(1-\delta)c_3\gamma_\ast \exp\left(-\int_{t_1}^{t}\overline{\mathfrak F}(r)dr\right),
				\end{align*} 
				where we take
				\begin{align*}
					0<c_3&=e^{-\nu((t_1,t_1+s_1+s_2])}\Bigg(\E\left[e^{-\nu((\eta_0,t_1])}\indic{t_1>\eta_0}\exp\left(-\int_{0}^{t_1}\gamma_{0}^I(r)\overline{\mathfrak F}(r)dr\right)\right]\\
					&\hspace{1.5cm}+\int_{0}^{t_1}\E\left[e^{-\nu((s+\eta,t_1])}\indic{t_1-s>\eta}\exp\left(-\int_{s}^{t_1}\gamma^I(r-s)\overline{\mathfrak F}(r)dr\right)\right]\overline{\mathfrak F}(s)\overline{\mathfrak S}(s)ds\\
					&\hspace{1cm}+\int_{0}^{t_1}\big(1-\bar{I}(v)\big)e^{-\nu((v,t_1])}\E\left[\exp\left(-\int_{v}^{t_1}\gamma^V(r-v)\fF(r)dr\right)\right]\nu(dv)\Bigg)<1,
				\end{align*}
				where the inequality follows from Remark~\ref{RQ-bound-phi} for appropriate function $\varphi$. 
				
				Moreover, since from \eqref{e-q} for $t\in[t_1+s_1,t_1+s_1+s_2],\,\overline{\mathfrak F}(t)\leq c_2$,
				\begin{align*}
					\overline{\mathfrak{S}}(t)&\geq(1-\delta)c_3\gamma_\ast \exp(-c_2(s_2+s_1)).
				\end{align*} 
				
				Then from \eqref{inR_0}
				\begin{equation}
					\forall t\in[t_1+s_1,t_1+s_1+s_2],\; \quad \overline{\mathfrak{S}}(t)\geq\frac{1+\epsilon}{R_0}.\label{e-q1}
				\end{equation}
				Let $y(t)=\overline{\mathfrak F}(t+t_1+s_1)$ and define $g$ as follows:
				\begin{equation*}
					g(t)=\overline{I}(0)\overline{\lambda}_0(t_1+s_1+t)+\int_{0}^{t_1+s_1}\overline{\lambda}(t_1+s_1+t-s)\overline{\mathfrak F}(s)\overline{\mathfrak S}(s)ds,
				\end{equation*}
				where we recall that
				\begin{align*}
					p(t)=\frac{\overline{\lambda}(t)}{R_0}.
				\end{align*}
				Then using \eqref{e-q1} for any $t\geq0$,
				\begin{align*}
					y(t)&\geq g(t)+(1+\epsilon)\int_{t_1+s_1}^{t_1+s_1+t}p(t_1+s_1+t-s)y(s-t_1-s_1)ds\\
					&= g(t)+(1+\epsilon)\int_{0}^{t}p(t-s)y(s)ds.
				\end{align*}
				However, from Assumption~\ref{ASS3} we deduce that
				\begin{equation*}
					g(t)\geq \overline{\mathfrak F}(t_1) h(s_1+t)
				\end{equation*}
				and as $\overline{\mathfrak F}(t_1)= c_1$ by continuity, we deduce that 
				\begin{equation}\label{eq-last}
					y(t)\geq c_1 h(s_1+t)+(1+\epsilon)\int_{0}^{t}p(t-s)y(s)ds.
				\end{equation}
				Thus, applying Lemma~9.8.2 in \cite{gripenberg_volterra_1990} to $ -y(t) $ in \eqref{eq-last} with the convolution kernel $ k(s) = (1+\varepsilon)p(s) $, we obtain
				\begin{equation*}
					y(t)\geq c_1 x(t)
				\end{equation*}
				where $x$ is given by \eqref{V1}.
				However $x(s_2)>2$. Hence $\overline{\mathfrak F}(t_1+s_1+s_2)>2c_1>c_1$ and $t_0\geq t_1+s_1+s_2$, this contradicts the definition of $t_1$. Hence $t_0=+\infty$. This concludes the proof. 
				
				\section{Alternative expression of $\big(\mathbb{E}\left[\gamma_{A(t),\mathcal{N}(t)}(t)\right],\mathbb{E}\left[\lambda_{A(t),\mathcal{N}(t)}(t)\right]\big)$}\label{sec-exp-F-G}
				In this section we establish the following Lemmas, which will be used to derive that the pair $(x(t),y(t)):=\big(\mathbb{E}\left[\gamma_{A(t),\mathcal{N}(t)}(t)\right],\mathbb{E}\left[\lambda_{A(t),\mathcal{N}(t)}(t)\right]\big)$ satisfies the set of equations~\eqref{eq:F-G-x}-\eqref{eq:F-G-y}.
				
				We recall that $(T^V_j)_j$ is the jump times of Poisson measure $Q^V$ with intensity measure $\nu$. For $t\geq0,$ $\mathcal{N}_t=Q^V([0,t]).$ 
				\subsection{Case of $y$}\label{sec-equiv-fF}
				We have
				\begin{align*}
					y(t)&:=\E[\lambda_{A(t),\mathcal{N}_t}(t)]=\E\Big[\lambda_{A(t)}^I(t-\tau_{A(t)})\indic{ T^V_{\mathcal{N}_t}-\tau_{A(t)}\leq\eta_{A(t)}}\Big]\\
					&=\E\Big[\indic{\mathcal{N}_t=0}\lambda_{A(t)}^I(t-\tau_{A(t)})\indic{ T^V_{\mathcal{N}_t}-\tau_{A(t)}\leq\eta_{A(t)}}\Big]+\E\Big[\sum_{j\geq1}\indic{\mathcal{N}_t=j}\lambda_{A(t)}^I(t-\tau_{A(t)})\indic{ T^V_{j}-\tau_{A(t)}\leq\eta_{A(t)}}\Big]\\
					&=\E\Big[\indic{\mathcal{N}_t=0}\lambda_{A(t)}^I(t-\tau_{A(t)})\Big]+\E\Big[\sum_{j\geq1}\indic{\mathcal{N}_t=j}\lambda_{A(t)}^I(t-\tau_{A(t)})\indic{\tau_{A(t)}\leq T^V_j\leq t}\indic{ T^V_{j}-\tau_{A(t)}\leq\eta_{A(t)}}\Big]\\
					&\hspace{2cm}+\E\Big[\sum_{j\geq1}\indic{\mathcal{N}_t=j}\lambda_{A(t)}^I(t-\tau_{A(t)})\indic{T^V_j\leq \tau_{A(t)}\leq t}\indic{ T^V_{j}-\tau_{A(t)}\leq\eta_{A(t)}}\Big].
				\end{align*}

				We introduce the following lemmas, the proofs of which are provided in Sections~\ref{Lem:proofs-I}, \ref{Lem:proofs-II}, and \ref{Lem:proofs-III}, respectively.
				
				\begin{lemma}\label{Lem:exp-I-y}
					\begin{align*}
						&\E\left[\indic{\mathcal{N}_t=0}\lambda_{A(t)}^I(t-\tau_{A(t)})\right]\\
						&=\exp\left(-\nu([0,t])\right)\Bigg(\E\left[\gamma_{0}^I(t)\exp\left(-\int_{0}^{t}\gamma_0^I(r)y(r)dr\right)\right]\\
						&\hspace{1cm}+\int_{0}^{t}\E\left[\gamma^I(t-s)\exp\left(-\int_{s}^{t}\gamma^I(r-s)y(r)dr\right)\right]x(s)y(s)ds\Bigg)
					\end{align*}
				\end{lemma}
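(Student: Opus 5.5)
The plan is to exploit the independence of the vaccination clock $Q^V$ from all the other randomness, and then to run the renewal-type computation of \cite{forien-Zotsa2022stochastic} on the event $\{\mathcal{N}_t=0\}$. The function appearing inside the expectations on the right-hand side should be the infectivity, $\lambda_0^I$ and $\lambda^I$, rather than $\gamma_0^I$ and $\gamma^I$. The argument below is written for a generic pair of random functions and therefore yields both the $\lambda$-version (what the left-hand side requires) and the $\gamma$-version used for $x$.

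\textbf{Step 1 (removing the vaccination).} On $\{\mathcal{N}_t=0\}$ no campaign occurs in $(0,t]$. By the convention $\gamma^V_{0}=0$ with $T^V_0=0$, for every $r\le t$ the indicators in the definition of $\gamma_{A(r),\mathcal{N}(r)}(r)$ select the infection branch. Hence the rate $\Upsilon(r)$ coincides with $\widetilde\Upsilon(r)=\gamma^I_{\widetilde A(r)}(r-\widetilde\tau_{\widetilde A(r)})\,y(r)$. Here $\widetilde A$ is the counting process built from the same $Q$ and $(\lambda^I_i,\gamma^I_i)_i$, with this vaccination-free rate and with the deterministic function $y$ plugged in.

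Constructing both processes by induction on jump times shows that $A=\widetilde A$ on $[0,t]$ on this event. Since $\widetilde A$ is a functional of $(Q,(\lambda^I_i,\gamma^I_i)_i)$ only, it is independent of $Q^V$. Therefore
\[
\E\big[\indic{\mathcal{N}_t=0}\lambda^I_{A(t)}(t-\tau_{A(t)})\big]=e^{-\nu([0,t])}\,\E\big[\lambda^I_{\widetilde A(t)}(t-\widetilde\tau_{\widetilde A(t)})\big].
\]

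\textbf{Step 2 (last-infection decomposition).} I would split according to whether $\widetilde A(t)=0$ or not. On $\{\widetilde A(t)=0\}$, conditioning on $(\lambda_0^I,\gamma_0^I)$, the probability of no jump of $Q$ under the rate curve on $[0,t]$ is $\exp(-\int_0^t\gamma_0^I(r)y(r)dr)$. This gives the first term.

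Otherwise I would write
\[
\lambda^I_{\widetilde A(t)}(t-\widetilde\tau_{\widetilde A(t)})=\int_{[0,t]\times\R_+}\lambda^I_{\widetilde A(s^-)+1}(t-s)\,\indic{\widetilde A(t)=\widetilde A(s)}\indic{u\le\widetilde\Upsilon(s^-)}\,Q(ds,du).
\]
Taking expectations, I would use the compensator of $Q$ together with the strong Markov/independence structure at the jump time $s$. The new pair $(\lambda^I,\gamma^I)$ drawn at $s$ is independent of $\mathcal{F}_{s^-}$, and given it, the probability of no further infection on $(s,t]$ is $\exp(-\int_s^t\gamma^I(r-s)y(r)dr)$. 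This yields
\[
\int_0^t\E\Big[\lambda^I(t-s)\exp\Big(-\int_s^t\gamma^I(r-s)y(r)dr\Big)\Big]\,\E[\widetilde\Upsilon(s)]\,ds .
\]

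\textbf{Step 3 (identifying the intensity), the main obstacle.} It remains to identify $\E[\widetilde\Upsilon(s)]=\E[\gamma_{\widetilde A(s)}(s)]\,y(s)$ with $x(s)y(s)$. The vaccination-free mean susceptibility is not a priori equal to $x(s)=\E[\gamma_{A(s),\mathcal N(s)}(s)]$, because vaccinated individuals have susceptibility $\gamma^V$.

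I would first try to handle this by keeping $\indic{\mathcal N_t=0}$ inside the compensator computation rather than factoring it out at the start. The factorisation $\indic{\mathcal N_t=0}=\indic{\mathcal N_s=0}\indic{Q^V((s,t])=0}$ splits the factor $e^{-\nu([0,t])}$ across the times before and after $s$, and the pre-$s$ part is then matched with the susceptibility mass used in the remaining terms. Concretely, I would rewrite the integrand with $\E[\indic{\mathcal N_s=0}\Upsilon(s)]$. Lemmas~\ref{lem:Inf} and \ref{Lem:exp-II-y}, which treat the complementary events, then account for the remaining contributions, so that after summing, $\E[\Upsilon(s)]=x(s)y(s)$ appears as in Proposition~\ref{prop:equal-G-F}. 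Checking that this bookkeeping closes exactly, so that the expression written with $x(s)y(s)$ is consistent once all three lemmas are combined, is where I expect the real work to lie.
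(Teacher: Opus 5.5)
Your Steps~1 and~2 are sound, and you are right that the expectations on the right must contain $\lambda_0^I,\lambda^I$ rather than $\gamma_0^I,\gamma^I$. But they prove a different identity from the one stated. Write $\widetilde x(s):=e^{\nu([0,s])}\E\big[\indic{\mathcal N_s=0}\gamma_{A(s),\mathcal N_s}(s)\big]$ for the mean susceptibility of your vaccination-free process $\widetilde A$. Then Steps~1--2 give
\begin{multline*}
\E\big[\indic{\mathcal N_t=0}\lambda^I_{A(t)}(t-\tau_{A(t)})\big]=e^{-\nu([0,t])}\Big(\E\Big[\lambda_0^I(t)e^{-\int_0^t\gamma_0^I(r)y(r)dr}\Big]\\
+\int_0^t\E\Big[\lambda^I(t-s)e^{-\int_s^t\gamma^I(r-s)y(r)dr}\Big]\widetilde x(s)y(s)\,ds\Big).
\end{multline*}
The gap is Step~3, and it cannot be closed: $\widetilde x\neq x$ in general, so the lemma with $x(s)$ is false. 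For a counterexample, take $\nu(dv)=c\,dv$ with $c>0$, $\gamma^V\equiv0$ and $(\lambda^I,\gamma^I)=(\beta\indic{[0,1)},0)$. Let the initial pair be $(\beta\indic{[0,1)},0)$ with probability $p\in(0,1)$ and $(0,1)$ otherwise. Then $y\ge p\beta$ on $[0,1)$, and
\[
\widetilde x(s)=(1-p)e^{-\int_0^sy(r)dr},\qquad x(s)=(1-p)e^{-cs-\int_0^sy(r)dr}.
\]
For $t\in(0,1)$ the left-hand side minus the stated right-hand side equals $\beta(1-p)e^{-ct}\int_0^t(1-e^{-cs})e^{-\int_0^sy(r)dr}y(s)\,ds>0$. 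Appealing to Lemmas~\ref{lem:Inf} and~\ref{Lem:exp-II-y} cannot rescue this, because adding separate identities proves a statement about their sum, not about this term alone.

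The paper's own proof obtains $x(s)$ exactly at the point you flagged. It asserts $\P(A(t)=i,\,Q^V([0,t])=0\mid\mathcal F_{\tau_i})\indic{\tau_i\le t}=e^{-\nu([0,t])}\exp\big(-\int_{\tau_i}^t\gamma_i^I(r-\tau_i)y(r)dr\big)\indic{\tau_i\le t}$, and then uses the full compensator $\gamma_{A(s),\mathcal N_s}(s)y(s)$. This treats $Q^V([0,\tau_i])$ as independent of $\mathcal F_{\tau_i}$. That is false, because the infection history before $\tau_i$ depends on past vaccinations through the $\gamma^V$ branch; in the example above, $Q^V([0,\tau_1])=0$ almost surely on $\{\tau_1<\infty\}$. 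Your Step~1 avoids this error by factoring out $Q^V$ only for $\widetilde A$, which genuinely is independent of $Q^V$.

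Your idea of keeping the indicator inside the compensator produces the correct weight $e^{-\nu((s,t])}\E[\indic{\mathcal N_s=0}\gamma_{A(s),\mathcal N_s}(s)]$. In Lemma~\ref{Lem:exp-II-y} the correct weight is likewise $e^{-\nu((s,t])}\E[\indic{\mathcal N_s\ge1}\gamma_{A(s),\mathcal N_s}(s)]$, rather than the paper's $e^{-\nu((s,t])}(1-e^{-\nu([0,s])})x(s)$ (valid form for diffuse $\nu$). These two errors cancel, so the sum still gives $e^{-\nu((s,t])}x(s)y(s)$ and Proposition~\ref{prop:equal-G-F} is unaffected. Your Step~3 bookkeeping is therefore a valid route to the $y$-equation of that proposition. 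The lemma itself, however, should be stated with $\widetilde x(s)$ in place of $x(s)$, and that is what your Steps~1--2 already prove.
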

				
				\begin{lemma}\label{lem:Inf}
					\begin{multline*}
						\E\Big[\sum_{j\geq1}\indic{\mathcal{N}_t=j}\lambda_{A(t)}^I(t-\tau_{A(t)})\indic{\tau_{A(t)}\leq T^V_j\leq t}\indic{ T^V_{j}-\tau_{A(t)}\leq\eta_{A(t)}}\Big]\\
						\begin{aligned}
							&=\int_{0}^{t}e^{-\nu((v,t])}\E\left[\lambda_{0}^I(t)\indic{v\leq\eta_{0}}\exp\left(-\int_{0}^{t}\gamma_{0}^I(r)y(r)dr\right)\right]\nu(dv)\\
							&\hspace{1cm}+\int_{0}^{t}\int_{w}^{t}e^{-\nu((v,t])}\E\left[\lambda^I(t-w)\indic{v-w\leq\eta}\exp\left(-\int_{w}^{t}\gamma^I(r-w)y(r)dr\right)\right]\nu(dv)x(w)y(w)dw\\
						\end{aligned}
					\end{multline*}
				\end{lemma}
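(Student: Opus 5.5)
The plan is to disintegrate the expectation according to two times: the time $w=\tau_{A(t)}$ of the last infection before $t$ (with $w=0$ and marks $(\lambda_0^I,\gamma_0^I)$ when $A(t)=0$), and the time $v=T^V_{\mathcal N_t}$ of the last vaccination campaign before $t$. The event in the statement requires $w\le v\le t$ and $v-w\le \eta_{A(t)}$. The starting observation is that on this event \emph{every} campaign in $(w,v]$ also falls inside the infectious period. Each such campaign is therefore unsuccessful, and no campaign occurs in $(v,t]$. Hence, on $(w,t]$, the current susceptibility is exactly $\gamma^I_{A(t)}(\cdot-w)$ and the current infectivity is exactly $\lambda^I_{A(t)}(\cdot-w)$. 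The vaccinations after $w$ then only enter through the indicators "$v-w\le\eta$" and "$Q^V((v,t])=0$", and never through the infection rate.

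\textbf{Vaccination time $v$.} I would rewrite $\sum_{j\ge1}\indic{\mathcal N_t=j}f(T^V_j)$ as
\[
\int_{(0,t]} f(v)\,\indic{Q^V((v,t])=0}\,Q^V(dv).
\]
I would then apply the Mecke (Slivnyak) formula for the Poisson measure $Q^V$. Since $Q^V((v,t])=0$ is independent of the restriction of $Q^V$ to $[0,v]$, this produces the factor $e^{-\nu((v,t])}\nu(dv)$, in the spirit of Lemma~\ref{Law-Tr}. Under the Palm measure the extra atom at $v$ does not affect $A$ on the relevant event, because that campaign is unsuccessful there.

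\textbf{Infection time $w$.} The case $A(t)=0$ gives the first term. For each fixed $v$, the individual must not be infected on $(0,t]$. On the event $v\le \eta_0$, the susceptibility on $(0,t]$ is $\gamma_0^I$, as noted above. By thinning of $Q$, conditionally on $(\lambda_0^I,\gamma_0^I)$ and on $Q^V$, the probability of no jump is
\[
\exp\Bigl(-\int_0^t\gamma_0^I(r)y(r)\,dr\Bigr),
\]
where $y$ is the deterministic force of infection $\E[\lambda_{A(r),\mathcal N(r)}(r)]$. Multiplying by $\lambda_0^I(t)\indic{v\le\eta_0}$ and integrating in $v$ gives the first term. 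For $A(t)\ge1$, I would write the last infection as an integral against $Q(dw,du)$ with integrand
\[
\indic{u\le\Upsilon(w^-)}\,\indic{\text{no infection in }(w,t]}\,\lambda^I_{A(w)}(t-w)\,\indic{v-w\le\eta_{A(w)}}.
\]
The intensity of $Q$ and the predictability of $\Upsilon$ then give a compensator $\Upsilon(w)\,dw$. Conditioning on $\mathcal F_{w}$, the fresh mark $(\lambda^I,\gamma^I)$ drawn at $w$ is independent of the past, and so are the increments of $Q$ after $w$. The future indicator therefore averages to
\[
\E\Bigl[\lambda^I(t-w)\indic{v-w\le\eta}\exp\Bigl(-\int_w^t\gamma^I(r-w)y(r)\,dr\Bigr)\Bigr].
\]
What remains of the past is $\E[\Upsilon(w)]=\E[\gamma_{A(w),\mathcal N(w)}(w)]\,y(w)=x(w)y(w)$. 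Combining this with the $v$-integration, restricted to $w\le v\le t$ (so $v\in(w,t]$), gives the second term after Fubini.

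\textbf{Main obstacle.} The hard step is making the two disintegrations rigorous simultaneously. The integrand is not predictable: it involves the future of $Q$ (no reinfection) and of $Q^V$ (no campaign after $v$). Moreover, $Q^V$ influences $A$ through the susceptibilities. I would first apply Mecke in $v$ for fixed $\omega$-dependence on $Q$. I would then handle the $Q$-integral by conditioning on $\mathcal F_w\vee\sigma(Q^V)$, using the independence of $Q$ and $Q^V$. The key point is to check that, on the indicator event, the conditional law of the reinfection count on $(w,t]$ is Poisson with the deterministic-given-marks intensity $\gamma^I(\cdot-w)y(\cdot)$. This relies on the unsuccessful-vaccination observation above, which guarantees that no $\gamma^V$ appears in the rate.
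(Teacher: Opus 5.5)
Your proposal is correct and is essentially the paper's proof: the paper also splits on $A(t)=0$ and $A(t)=i\ge1$, and uses your observation that on $\{T^V_j-\tau_i\le\eta_i\}$ every campaign in $(\tau_i,T^V_j]$ fails to get the factor $e^{-\nu((T^V_j,t])}\exp\bigl(-\int_{\tau_i}^{t}\gamma_i^I(r-\tau_i)y(r)\,dr\bigr)$ by conditioning on $\mathcal F_{\tau_i}\vee\mathcal G_{T^V_j}$; it then replaces $Q^V(dv)$ by $\nu(dv)$ and $A(dw)$ by $\gamma_{A(w),\mathcal N_w}(w)y(w)\,dw$ via a martingale argument, and your Mecke-formula and compensator steps are a cleaner rigorous version of these two replacements. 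As in the paper, identifying $\sum_{j\ge1}\indic{\mathcal N_t=j}f(T^V_j)$ with $\int f(v)\indic{Q^V((v,t])=0}\,Q^V(dv)$ tacitly requires $Q^V$ to be simple, i.e.\ $\nu$ without atoms: for $\nu=\alpha\delta_s$ with $0<s\le t$, $\gamma_0^I\equiv0$ and $\lambda_0^I\equiv1$, the left-hand side of the lemma equals $1-e^{-\alpha}$ while the right-hand side equals $\alpha$.
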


				\begin{lemma}\label{Lem:exp-II-y}
					\begin{multline*}
						\E\Big[\sum_{j\geq1}\indic{\mathcal{N}_t=j}\lambda_{A(t)}^I(t-\tau_{A(t)})\indic{T^V_j\leq \tau_{A(t)}\leq t}\indic{ T^V_{j}-\tau_{A(t)}\leq\eta_{A(t)}}\Big]\\
						\begin{aligned}
							&=\int_{0}^{t}\int_{0}^{w}e^{-\nu((v,t])}\E\left[\lambda^I(t-w)\exp\left(-\int_{w}^{t}\gamma^I(r-w)y(r)dr\right)\right]\nu(dv)x(w)y(w)dw\\
						\end{aligned}
					\end{multline*}
				\end{lemma}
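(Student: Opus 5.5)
The plan is to turn the sum over $j$ into a stochastic integral against $Q^V$, and the last infection time into a stochastic integral against $Q$. Then apply the Mecke (Palm) formula for each Poisson random measure, and use the independence of the two measures on disjoint time intervals.

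First, on the event $\{T^V_j\le \tau_{A(t)}\}$ the constraint $T^V_j-\tau_{A(t)}\le\eta_{A(t)}$ holds automatically, so that indicator can be dropped. Since vaccination times lie in $(0,t]$, the initial infection ($\tau_{A(t)}=0$) cannot occur after a vaccination. Hence no $\lambda_0^I$ term appears, which is consistent with the statement. Next, $\{\mathcal N_t=j\}$ together with $T^V_j=v$ means that $v$ is an atom of $Q^V$ and $Q^V((v,t])=0$, so
\[
\sum_{j\ge1}\indic{\mathcal N_t=j}f(T^V_j)=\int_{(0,t]}\indic{Q^V((v,t])=0}f(v)\,Q^V(dv).
\]
Similarly, writing $\tau_{A(t)}=w$ as "$w$ is an accepted atom of $Q$ and there is no accepted atom in $(w,t]$" gives
\[
\lambda^I_{A(t)}(t-\tau_{A(t)})\indic{\tau_{A(t)}\ge v}=\int_{(v,t]\times\R_+}\indic{u\le\Upsilon(w^-)}\lambda^I_{A(w)}(t-w)\indic{A(t)=A(w)}\,Q(dw,du).
\]

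Then I apply the Mecke formula, first in $Q^V$ and then in $Q$ (the compensator of the accepted atoms is $\Upsilon(w)\,dw$). This yields
\[
\int_0^t\int_0^w \E\Big[\Upsilon(w)\,\lambda^I(t-w)\indic{\text{no reinfection in }(w,t]}\indic{Q^V((v,t])=0}\,\Big|\,v\in Q^V\Big]\nu(dv)\,dw .
\]
On $(w,t]$ there is no vaccination. Hence the susceptibility of the freshly drawn copy $(\lambda^I,\gamma^I)$ is $\gamma^I(\cdot-w)$, and the conditional probability of no reinfection, given the past, is $\exp(-\int_w^t\gamma^I(r-w)y(r)dr)$, exactly as in the computation behind Lemma~\ref{Lem:exp-I-y}. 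The factor $\E[\lambda^I(t-w)\exp(\cdots)]$ then splits off because the new draw at $w$ is independent of the past. By independence of $Q^V$ on $(w,t]$ from everything before $w$, the vaccination-free requirement on $(w,t]$ contributes $e^{-\nu((w,t])}$.

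The main obstacle is the remaining factor $\E[\Upsilon(w)\indic{Q^V((v,w])=0}\mid v\in Q^V]$, which must equal $x(w)e^{-\nu((v,w])}$. This requires decoupling the susceptibility at time $w$ from the event "no vaccination in $(v,w]$ after one at $v$." I would handle it by conditioning on $\mathcal F_w$ and using $y(w)=\E[\lambda_{A(w),\mathcal N(w)}(w)]$ as the deterministic mean-field factor in $\Upsilon$. I would then try to show that, under the Palm measure at $v$, the quantity $\E[\gamma_{A(w),\mathcal N(w)}(w)\indic{Q^V((v,w])=0}]$ factorizes as $e^{-\nu((v,w])}x(w)$. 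If this exact factorization resists, I would instead decompose according to the infection state at $v$, as in Lemma~\ref{lem:Inf}, and recombine the pieces using the identities \eqref{eq-id-nu0}--\eqref{eq-id-nu}. In either case, combining $e^{-\nu((v,w])}e^{-\nu((w,t])}=e^{-\nu((v,t])}$ and integrating over $v\in(0,w)$ and $w\in(0,t)$ gives the stated formula.
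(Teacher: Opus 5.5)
There is a genuine gap, and it is exactly the step you flag as the main obstacle. That step is not merely hard; it is false. Neither your Palm-measure factorization $\E[\gamma_{A(w),\mathcal N_w}(w)\indic{Q^V((v,w])=0}]=e^{-\nu((v,w])}x(w)$ nor your fallback through \eqref{eq-id-nu0}--\eqref{eq-id-nu} can close it, because those are deterministic identities in $\nu$ and cannot remove a correlation. Your preliminary reductions are fine: the $\eta$-indicator is automatic, $A(t)=0$ contributes nothing, and the absence of campaigns and reinfections on $(w,t]$ together with the fresh draw at $w$ gives $e^{-\nu((w,t])}\E[\lambda^I(t-w)\exp(-\int_w^t\gamma^I(r-w)y(r)dr)]$.

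Integrated against $\nu(dv)$ over $(0,w]$ (for diffuse $\nu$), your factorization says $\E[\indic{\mathcal N_w\geq1}\gamma_{A(w),\mathcal N_w}(w)]=(1-e^{-\nu((0,w])})\,x(w)$. In words, current susceptibility would be uncorrelated with whether a campaign has already occurred. This fails as soon as a campaign can immunize. Take $\nu(dv)=c\,dv$, $\gamma^V\equiv0$, and everyone initially infected with $\eta_0\sim U(0,2)$, $\lambda_0^I=\beta\indic{[0,\eta_0)}$, $\gamma_0^I=\indic{[\eta_0,\infty)}$, $\lambda^I=\beta\indic{[0,2)}$, $\gamma^I=\indic{[2,\infty)}$. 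As $w\to0$, $x(w)\sim w/2$ and $\P(\mathcal N_w\geq1)\sim cw$. A susceptible individual who has seen a campaign must have seen all of them during its initial infectious period, so $\E[\indic{\mathcal N_w\geq1}\gamma_{A(w),\mathcal N_w}(w)]\sim cw^2/4$, not $cw^2/2$. Consequently, as $t\to0$, the left-hand side of the lemma behaves like $\beta^2ct^3/12$ while the right-hand side behaves like $\beta^2ct^3/6$. The identity therefore fails in this example, and no argument can prove it as written.

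What you can prove is the following. Condition on the whole history up to $\tau_i$; this turns the no-reinfection and no-campaign events on $(\tau_i,t]$ into $e^{-\nu((\tau_i,t])}\exp(-\int_{\tau_i}^t\gamma_i^I(r-\tau_i)y(r)dr)$. Then use the compensator $\Upsilon(w)\,dw$ of $A$ against the predictable factor $\indic{\mathcal N_{w-}\geq1}$. This gives
\[
\int_0^t e^{-\nu((w,t])}\,\E\Big[\lambda^I(t-w)\exp\Big(-\int_w^t\gamma^I(r-w)y(r)dr\Big)\Big]\,\E\big[\indic{\mathcal N_w\geq1}\gamma_{A(w),\mathcal N_w}(w)\big]\,y(w)\,dw .
\]
The paper reaches the stated formula through two steps. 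First, it claims $\P(A(t)=A(\tau_i),Q^V((T^V_j,t])=0\mid\mathcal F_{\tau_i}\vee\mathcal G_{T^V_j})=e^{-\nu((T^V_j,t])}\exp(\cdots)$. Second, it replaces $Q^V(dv)$ by $\nu(dv)$ in $\int_0^{\tau_i}\cdots Q^V(dv)$. Both steps treat $Q^V$ on $(T^V_j,\tau_i]$ as independent of $\tau_i$, which is the same independence you could not justify.

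The damage is contained. The same computation gives the $i\geq1$ part of Lemma~\ref{Lem:exp-I-y} with $\E[\indic{\mathcal N_w=0}\gamma_{A(w),\mathcal N_w}(w)]$ in place of $e^{-\nu([0,w])}x(w)$. The two errors cancel in the sum. For diffuse $\nu$ that sum equals $\int_0^t e^{-\nu((w,t])}\E[\lambda^I(t-w)\exp(\cdots)]\,x(w)y(w)\,dw$, which is all Proposition~\ref{prop:equal-G-F} needs.
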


				\subsection{Case of $x$}
				We have
				\begin{align*}
					x(t)&:=\E[\gamma_{A(t),\mathcal{N}_t}(t)]=\E\Big[\gamma_{A(t)}^I(t-\tau_{A(t)})\indic{ T^V_{\mathcal{N}_t}-\tau_{A(t)}\leq\eta_{A(t)}}\Big]+\E\Big[\gamma^V_{\mathcal{N}_t}(t-T^V_{\mathcal{N}_t})\indic{T_{\mathcal{N}_t}^V-\tau_{A(t)}>\eta_{A(t)}}\Big]\\
					&=\E\Big[\gamma_{A(t)}^I(t-\tau_{A(t)})\indic{ T^V_{\mathcal{N}_t}-\tau_{A(t)}\leq\eta_{A(t)}}\Big]+\E\Big[\sum_{j\geq1}\indic{\mathcal{N}_t=j}\gamma_{j}^V(t-T^V_j)\indic{ T^V_{j}-\tau_{A(t)}>\eta_{A(t)}}\Big].
				\end{align*}
				
				We first note that, from Section~\ref{sec-equiv-fF} we easily derive the following result.
				\begin{lemma}\label{Lem:-I-x}
					\begin{equation*}
						\begin{aligned}
							&\E\Big[\gamma_{A(t)}^I(t-\tau_{A(t)})\indic{ T^V_{\mathcal{N}_t}-\tau_{A(t)}\leq\eta_{A(t)}}\Big]\\
							&=e^{-\nu([0,t])}\Bigg(\E\left[\gamma_{0}^I(t)\exp\left(-\int_{0}^{t}\gamma_{0}^I(r)y(r)dr\right)\right]\\
							&\hspace{1cm}+\int_{0}^{t}\E\left[\gamma^I(t-s)\exp\left(-\int_{s}^{t}\gamma^I(r-s)y(r)dr\right)\right]y(s)x(s)ds\Bigg)\\
							&+\int_{0}^{t}e^{-\nu((v,t])}\E\left[\gamma_{0}^I(t)\indic{v\leq\eta_{0}}\exp\left(-\int_{0}^{t}\gamma_{0}^I(r)y(r)dr\right)\right]\nu(dv)\\
							&\hspace{0.5cm}+\int_{0}^{t}\int_{s}^{t}e^{-\nu((v,t])}\E\left[\gamma^I(t-s)\indic{v-s\leq\eta}\exp\left(-\int_{s}^{t}\gamma^I(r-s)y(r)dr\right)\right]\nu(dv)x(s)y(s)ds\\
							&+\int_{0}^{t}\int_{0}^{s}e^{-\nu((v,t])}\E\left[\gamma^I(t-s)\exp\left(-\int_{s}^{t}\gamma^I(r-s)y(r)dr\right)\right]\nu(dv)x(s)y(s)ds.
						\end{aligned}
					\end{equation*}
				\end{lemma}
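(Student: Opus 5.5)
The plan is to obtain Lemma~\ref{Lem:-I-x} by repeating, almost verbatim, the computation behind Lemmas~\ref{Lem:exp-I-y}, \ref{lem:Inf} and \ref{Lem:exp-II-y}, with $\lambda_{A(t)}^I(t-\tau_{A(t)})$ replaced by $\gamma_{A(t)}^I(t-\tau_{A(t)})$. The indicator $\indic{T^V_{\mathcal{N}_t}-\tau_{A(t)}\le\eta_{A(t)}}$ is the same in both settings. We therefore decompose the left-hand side exactly as was done for $y$ in Section~\ref{sec-equiv-fF}. Conditioning on the value of $\mathcal{N}_t$ gives three contributions:
\begin{itemize}
\item the event $\{\mathcal{N}_t=0\}$, on which the indicator equals one;
\item the events $\{\mathcal{N}_t=j\}$ with $\tau_{A(t)}\le T^V_j\le t$;
\item the events $\{\mathcal{N}_t=j\}$ with $T^V_j\le\tau_{A(t)}\le t$.
\end{itemize}
In the last contribution the indicator $\indic{T^V_j-\tau_{A(t)}\le\eta_{A(t)}}$ is automatically one, since $T^V_j-\tau_{A(t)}\le 0\le\eta_{A(t)}$.

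For each piece we use the independence of $Q^V$ from $(Q,(\lambda^I_i,\gamma^I_i))$ and the fact that $\{\mathcal{N}_t=j,\,T^V_j\in dv\}$ has intensity $\nu(dv)e^{-\nu((v,t])}$. For the event $\{\mathcal{N}_t=0\}$ the weight is $e^{-\nu([0,t])}$. We then integrate against the law of the last infection time $\tau_{A(t)}$.

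The proofs of those lemmas rest on one basic identity, which we would isolate and reuse. For a bounded test function $f$ of the infection age, its expectation is
\[
\E\big[f_{A(t)}(t-\tau_{A(t)})\indic{\tau_{A(t)}\in dw}\big]=\E\Big[f(t-w)\exp\Big(-\int_w^t\gamma^I(r-w)y(r)\,dr\Big)\Big]x(w)y(w)\,dw
\]
for $w>0$. There is also an atom at $w=0$, in which $\gamma_0^I$ and $f_0$ replace $\gamma^I$ and $f$ and the factor $x(w)y(w)$ is absent. The derivation is as follows. Given the infection at time $w$, the intensity of the process $A$ is $\gamma^I(\cdot-w)y(\cdot)$, so the probability of no further infection on $(w,t]$ is the exponential factor. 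The infection intensity at time $w$ is $\E[\Upsilon(w)]=x(w)y(w)$, using Proposition~\ref{prop:equal-G-F}'s definition of $(x,y)$.

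Crucially, this identity holds for any bounded $f$, including random functions jointly distributed with $\gamma^I$ and $\eta$. It therefore applies equally well with $f=\lambda^I$ and with $f=\gamma^I$. Inserting the extra indicator $\indic{v-w\le\eta}$ in the middle case, and multiplying by the vaccination weight before integrating over $v$ (over $[w,t]$ or over $[0,w]$, respectively), produces the displayed terms.

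The terms then assemble as follows:
\begin{itemize}
\item the $\{\mathcal{N}_t=0\}$ piece yields the first bracket multiplied by $e^{-\nu([0,t])}$;
\item the middle piece yields the two terms carrying $\indic{v\le\eta_0}$ and $\indic{v-s\le\eta}$;
\item the last piece yields the term with $v\in[0,s]$ and no indicator.
\end{itemize}

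The main obstacle is justifying that the vaccination times can be integrated out independently of the infection history. This is delicate because the intensity $\Upsilon$ itself depends on past vaccinations through $\gamma_{A(t),\mathcal{N}(t)}$. We would handle it exactly as in Lemmas~\ref{lem:Inf}--\ref{Lem:exp-II-y}. The key observation is that on the event under consideration, namely the last vaccination occurring either during the current infectious period or before the last infection, the individual's susceptibility after $\tau_{A(t)}$ is governed by $\gamma^I_{A(t)}$ alone. Hence the no-reinfection factor depends only on $\gamma^I$ and $y$ on $[w,t]$. Since the statement says the result follows easily from Section~\ref{sec-equiv-fF}, the proof amounts to substituting $\gamma$ for $\lambda$ in those three lemmas and summing.
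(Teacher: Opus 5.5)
Your three-piece decomposition is the paper's own: the paper obtains this lemma by rerunning Lemmas~\ref{Lem:exp-I-y}, \ref{lem:Inf} and~\ref{Lem:exp-II-y} with $\gamma$ in place of $\lambda$. However, the identity you isolate, and the way you multiply it by vaccination weights, do not hold in this model.

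First, after an infection at $w$ the intensity of $A$ equals $\gamma^I(\cdot-w)y(\cdot)$ only until the first vaccination after $w+\eta$. After that vaccination it is driven by $\gamma^V$. So your formula for $\E[f_{A(t)}(t-\tau_{A(t)})\indic{\tau_{A(t)}\in dw}]$ is false.

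Second, and more importantly, consider the pieces $\{\mathcal{N}_t=0\}$ and $\{T^V_{\mathcal{N}_t}\le\tau_{A(t)}\}$. There the vaccination event also constrains $Q^V$ on $(0,w]$ (resp.\ $(v,w]$), i.e.\ before the last infection. This is correlated with the infection rate at $w$, because $\Upsilon$ depends on past vaccinations. Condition on the joint history at the infection time and use the compensator of $A$. With $\Phi(s)=\E[\gamma^I(t-s)\exp(-\int_s^t\gamma^I(r-s)y(r)dr)]$, the true $i\ge1$ contributions of these two pieces are
\[
\int_0^t e^{-\nu((s,t])}\Phi(s)\,\E\big[\indic{\mathcal{N}_s=0}\gamma_{A(s),\mathcal{N}_s}(s)\big]y(s)\,ds
\quad\text{and}\quad
\int_0^t e^{-\nu((s,t])}\Phi(s)\,\E\big[\indic{\mathcal{N}_s\geq1}\gamma_{A(s),\mathcal{N}_s}(s)\big]y(s)\,ds .
\]
Your weights replace these two expectations by $e^{-\nu([0,s])}x(s)$ and $(1-e^{-\nu([0,s])})x(s)$. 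That is, they treat $\{\mathcal{N}_s=0\}$ as independent of the current susceptibility. This is false in general: with $\gamma^V\equiv0$, a successful vaccination sets the susceptibility to $0$ for good. Your ``key observation'' only controls what happens after $\tau_{A(t)}$, not before. Hence neither piece equals the term you assign to it, and the same holds for Lemmas~\ref{Lem:exp-I-y} and~\ref{Lem:exp-II-y} taken separately. Only their sum is correct.

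The repair is to treat the two pieces together rather than separately.
\begin{enumerate}
\item For $i\ge1$ their union is $\{Q^V((\tau_i,t])=0\}$, which involves only the future of $\tau_i$.
\item Condition on the joint history up to $\tau_i$ and on the new draw $(\lambda_i^I,\gamma_i^I)$. On this event the susceptibility on $(\tau_i,t]$ is $\gamma_i^I(\cdot-\tau_i)$. So $\{A(t)=i\}\cap\{Q^V((\tau_i,t])=0\}$ has conditional probability $e^{-\nu((\tau_i,t])}\exp(-\int_{\tau_i}^t\gamma_i^I(r-\tau_i)y(r)dr)$.
\item The compensator $\gamma_{A(s),\mathcal{N}_s}(s)y(s)\,ds$ of $A$ then gives $\int_0^t e^{-\nu((s,t])}\Phi(s)x(s)y(s)\,ds$.
\item The identity $e^{-\nu((s,t])}=e^{-\nu([0,t])}+\int_0^s e^{-\nu((v,t])}\nu(dv)$ splits this into exactly the second and the last terms of the statement.
\end{enumerate}
The middle piece and the $A(t)=0$ terms are fine as you describe, since there the vaccination event lies entirely after the last infection.

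Finally, this splitting identity requires $\nu$ to be diffuse, as does the density $\nu(dv)e^{-\nu((v,t])}$ you use for the last vaccination time. At an atom of mass $\alpha$ a vaccination occurs with probability $1-e^{-\alpha}$, not $\alpha$. The paper carries the same implicit restriction.
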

				It remains to compute 
				\begin{align}\label{eq-gV}
					&\E\Big[\sum_{j\geq1}\indic{\mathcal{N}_t=j}\gamma_{j}^V(t-T^V_j)\indic{ T^V_{j}-\tau_{A(t)}>\eta_{A(t)}}\Big]\non\\
					&=\E\Big[\indic{A(t)=0}\sum_{j\geq1}\indic{\mathcal{N}_t=j}\gamma_{j}^V(t-T^V_j)\indic{ T^V_{j}>\eta_{0}}\Big]+\sum_{i\geq1}\E\Big[\indic{A(t)=i}\sum_{j\geq1}\indic{\mathcal{N}_t=j}\gamma_{j}^V(t-T^V_j)\indic{ T^V_{j}-\tau_{i}>\eta_{i}}\Big]
				\end{align}
				We present the following results, the proofs of which are provided in Section~\ref{Lem:proofs-IV} and Section~\ref{Lem:proofs-V}, respectively.
				
				\begin{lemma}\label{Lem:-II-x}
					\begin{multline}
						\E\Big[\indic{A(t)=0}\sum_{j\geq1}\indic{\mathcal{N}_t=j}\gamma_{j}^V(t-T^V_j)\indic{ T^V_{j}>\eta_{0}}\Big]\\
						\begin{aligned}
							&=\int_0^te^{-\nu((v,t])}\E\Big[\indic{A(v)=0}\indic{ v>\eta_{0}}\Big]\E\left[\gamma^V(t-v)\exp\left(-\int_{v}^{t}\gamma^V(r-v)y(r)dr\right)\right]\nu(dv)
						\end{aligned}
					\end{multline}
				\end{lemma}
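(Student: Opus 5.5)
Our plan is to condition on the vaccination point process and use the structure of the McKean--Vlasov process $A$ (whose rate $\Upsilon(t)=\gamma_{A(t),\mathcal N(t)}(t)\,y(t)$ involves the deterministic function $y$). We first rewrite the sum over $j$ as a stochastic integral against $Q^V$. On $\{\mathcal N_t=j\}$ the point $T^V_j$ is the last vaccination time before $t$. Hence
\[
\sum_{j\geq1}\indic{\mathcal N_t=j}\gamma^V_j(t-T^V_j)\indic{T^V_j>\eta_0}
=\int_{(0,t]}\gamma^V_{\mathcal N_v}(t-v)\indic{v>\eta_0}\indic{Q^V((v,t])=0}\,Q^V(dv).
\]
The indicator $\indic{A(t)=0}$ can then be split as $\indic{A(v^-)=0}\indic{A(v)-A(v^-)=0}\indic{A(t)-A(v)=0}$, where $v$ is the vaccination time under consideration. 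Since $Q$ and $Q^V$ are independent with diffuse intensity in $r$, $A$ a.s. does not jump at $v$, so $\indic{A(v)=0}$ suffices.

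Next we apply the Mecke (Slivnyak) formula for the Poisson random measure $Q^V$ with intensity $\nu$. This replaces $Q^V(dv)$ by $\nu(dv)$ and inserts an extra atom at $v$. Doing so, the integrand becomes a functional of $Q^V+\delta_v$, and we then use the Markov-type independence of the Poisson measure on $[0,v]$ and on $(v,t]$:
\begin{itemize}
\item The event $\{A(v)=0,\ v>\eta_0\}$ depends only on $(\lambda^I_0,\gamma^I_0)$, on $Q$ restricted to $[0,v]$ and on $Q^V|_{[0,v)}$. Note that on this event the added atom makes the individual vaccinated at time $v$.
\item On the event that $Q^V((v,t])=0$ (probability $e^{-\nu((v,t])}$, independent of the rest), after time $v$ the individual has susceptibility $\gamma^V_{\cdot}(\cdot-v)$. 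Here $\gamma^V_{\cdot}$ is a fresh copy of $\gamma^V$, independent of the past, and the current infectivity is zero.
\item The event $\{A(t)-A(v)=0\}$ then requires no point of $Q$ under $r\mapsto \gamma^V(r-v)y(r)$ on $(v,t]$.
\end{itemize}

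Conditionally on $\gamma^V$ and on $\mathcal F_v$, the probability of no point of $Q$ in the region $\{(r,u): v<r\le t,\ u\le \gamma^V(r-v)y(r)\}$ is $\exp(-\int_v^t\gamma^V(r-v)y(r)dr)$. Multiplying by $\gamma^V(t-v)$ and taking expectation yields the factor
\[
\E\Big[\gamma^V(t-v)\exp\Big(-\int_v^t\gamma^V(r-v)y(r)dr\Big)\Big].
\]
The past contributes $\E[\indic{A(v)=0}\indic{v>\eta_0}]$. Combining these gives the stated formula with weight $e^{-\nu((v,t])}\nu(dv)$.

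The main obstacle is justifying the factorization at the vaccination time. Two points are delicate. First, under the Palm measure the process $A$ on $[0,v]$ is unaffected by the added atom at $v$ (by right-continuity and the absence of a simultaneous jump). Second, the indicator $\indic{T^V_j>\eta_0}$ must be checked to be exactly the condition ``not infected at $v$'' given $A(v)=0$. For this we would proceed step by step: first apply Mecke's formula, then condition on $\mathcal F_v$ generated by all randomness up to $v$, and finally use the strong-Markov-like independence of $Q|_{(v,t]\times\R_+}$, $Q^V|_{(v,t]}$ and $\gamma^V_{\mathcal N_v}$ from $\mathcal F_v$. A simpler route would be to invoke Theorem~\ref{Lem-T0} with $\varphi(r)=\gamma^V(r-v)y(r)$ to handle the ``no further vaccination'' indicator. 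However, the direct Palm-calculus factorization seems cleaner.
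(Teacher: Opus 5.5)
Your proof is correct, but it is organized differently from the paper's.

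\textbf{The paper's route.} The paper conditions at the random vaccination times. For each $j$ it computes $\E[\indic{\mathcal N_t=j}\indic{P(T^V_j,t,\gamma^V_j)=0}\gamma^V_j(t-T^V_j)\mid\mathcal F_{T^V_j-}\vee\mathcal G_{T^V_j-}]$. This is a strong-Markov step for $Q$ and $Q^V$ at $T^V_j$, and it gives $e^{-\nu((T^V_j,t])}\gamma^V_j(t-T^V_j)\exp(-\int_{T^V_j}^t\gamma^V_j(r-T^V_j)y(r)dr)$. The paper then integrates out $\gamma^V_j$, and only afterwards writes the sum over $j$ as $\int_0^t(\cdots)\,Q^V(dv)$. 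At that stage the integrand $\indic{A(v)=0}\indic{v>\eta_0}e^{-\nu((v,t])}\E[\gamma^V(t-v)\exp(\cdots)]$ is predictable, up to replacing $A(v)$ by $A(v-)$. So $Q^V(dv)$ can be replaced by its compensator $\nu(dv)$.

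\textbf{Your route.} You reverse the order. You write the sum as a $Q^V$-integral while the anticipating factors $\indic{Q^V((v,t])=0}$ and $\indic{A(t)-A(v)=0}$ are still inside. You then apply the Mecke formula, which needs no predictability. All later conditioning therefore happens at the deterministic time $v$. It uses only the independence of the Poisson measures on $[0,v]$ and on $(v,t]$, together with the fact that $\gamma^V_{\mathcal N_{v-}+1}$ is a fresh copy.

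\textbf{What each buys.} In your version the factor $e^{-\nu((v,t])}$ and the fresh vaccine profile come straight out of the Palm configuration $Q^V+\delta_v$. The only real check is that the added atom does not change $A$ on $[0,v]$, and you identify it. The paper's version avoids Palm calculus and reuses the same conditioning-plus-compensator template as its other lemmas in that section. The price is a rather informal conditioning on $\mathcal G_{T^V_j-}$ at a random time.

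\textbf{A caveat common to both.} Both arguments tacitly need $Q^V$ to be simple, i.e.\ $\nu$ diffuse.
\begin{itemize}
\item Your identity $\sum_j\indic{\mathcal N_t=j}(\cdots)=\int(\cdots)\indic{Q^V((v,t])=0}\,Q^V(dv)$ counts the last atom of $Q^V$ with its multiplicity.
\item The paper's step giving the factor $e^{-\nu((T^V_j,t])}$ ignores further points sitting at $T^V_j$.
\end{itemize}
This limits the statement itself, not your method. Take $\nu=\alpha\delta_\tau$ with $0<\tau<t$, $\lambda^I_0=\lambda^I\equiv0$ (so $y\equiv0$ and $A\equiv0$), and $\gamma^V\equiv1$. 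Then the left-hand side equals $1-e^{-\alpha}$, while the right-hand side equals $\alpha$.
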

				\begin{lemma}\label{Lem:-III-x}
					\begin{multline}
						\sum_{i\geq1}\E\Big[\indic{A(t)=i}\sum_{j\geq1}\indic{\mathcal{N}_t=j}\gamma_{j}^V(t-\tau_{i})\indic{ T^V_{j}-\tau_{i}>\eta_{i}}\Big]\\
						\begin{aligned}
							&=\int_{0}^{t}e^{-\nu((v,t])}\E\Big[\sum_{i\geq1}\indic{A(v)=A(\tau_i)}\indic{ v-\tau_i>\eta_{i}}\Big]
							\E\left[\gamma^V(t-v)\exp\left(-\int_{v}^{t}\gamma^V(r-v)y(r)dr\right)\right]\nu(dv).
						\end{aligned}
					\end{multline}
				\end{lemma}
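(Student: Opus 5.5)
The identity in Lemma~\ref{Lem:-III-x} comes from conditioning on the last vaccination time. The other randomness stays frozen up to that time, and the post-vaccination susceptibility decouples from everything before.

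On $\{A(t)=i,\ \mathcal{N}_t=j\}$ with $T^V_j-\tau_i>\eta_i$, I will write the left-hand side as a stochastic integral against the vaccination measure. A jump of $Q^V$ at time $v\le t$ is the last one exactly when $Q^V((v,t])=0$. The conditions ``$A(v)=A(\tau_i)$, i.e.\ $i$ is the last infection before $v$, and $v-\tau_i>\eta_i$'' describe the event that the individual is successfully vaccinated at $v$. The event ``$A(t)=i$'' then requires no new infection on $(v,t]$ after vaccination. With $\mathcal G_v$ the history up to $v$, I will therefore rewrite the summand as
\[
\int_{[0,t]}\indic{Q^V((v,t])=0}\sum_{i\ge1}\indic{A(v^-)=A(\tau_i)}\indic{v-\tau_i>\eta_i}\,\gamma^V_{\mathcal{N}_v}(t-v)\,\indic{\text{no infection in }(v,t]}\,Q^V(dv),
\]
where $\gamma^V_{\mathcal N_v}$ is the fresh susceptibility drawn at the jump $v$. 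I will then apply the Mecke (Slivnyak) formula for the Poisson measure $Q^V$. It replaces $Q^V(dv)$ by $\nu(dv)$ and the process by the process with an added atom at $v$. Because $\nu$ may have atoms, I will check that adding a point at $v$ does not change the quantities indexed by the past, using $A(v^-)$ versus $A(v)$. This is legitimate since $A$ almost surely does not jump at the fixed time $v$ when $\Upsilon$ is bounded.

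Under the Palm measure I will factorize. The indicator $\indic{Q^V((v,t])=0}$ depends only on $Q^V$ restricted to $(v,t]$. That restriction is independent of $Q^V|_{[0,v]}$, of $Q$, and of all the marks, so it contributes $e^{-\nu((v,t])}$. On this event the susceptibility on $(v,t]$ equals $\gamma^V_{\mathcal N_v}(r-v)$. Since $\Upsilon(r)=\gamma(r)\,y(r)$ with $y$ deterministic, the conditional probability of no infection on $(v,t]$, given $\mathcal G_v$ and the new $\gamma^V$, is $\exp(-\int_v^t\gamma^V(r-v)y(r)\,dr)$. This follows from the thinning of $Q$ restricted to $(v,t]\times\R_+$, which is independent of $\mathcal G_v$. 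The new draw $\gamma^V_{\mathcal N_v}$ is independent of $\mathcal G_v$ and distributed as $\gamma^V$. Taking expectations first in $Q|_{(v,t]}$, then in $\gamma^V$, and finally over $\mathcal G_v$ gives the product
\[
\E\Big[\sum_{i\ge1}\indic{A(v)=A(\tau_i)}\indic{v-\tau_i>\eta_i}\Big]\;\E\Big[\gamma^V(t-v)e^{-\int_v^t\gamma^V(r-v)y(r)dr}\Big].
\]
Integrating against $\nu(dv)$ then yields the claim. The argument parallels Lemma~\ref{Lem:-II-x}, with $\eta_0$ replaced by $\tau_i+\eta_i$.

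The main obstacle is making the rewriting with the Palm formula rigorous. The integrand $\indic{Q^V((v,t])=0}$ is anticipating, which is exactly the non-predictability the introduction warns about. To handle it I will condition on $Q^V|_{[0,v]}$ rather than use a martingale compensator. Concretely, I will write $\E[\sum_j f(T^V_j,Q^V)]=\int\E[f(v,Q^V+\delta_v)]\nu(dv)$. This formula holds for all measurable $f\ge0$ with no predictability requirement. I will then check that the pre-$v$ event $\{A(v)=A(\tau_i),\ v-\tau_i>\eta_i\}$ is unaffected by the added atom, because a vaccination only changes the dynamics after $v$, and verify the independence structure stated above.
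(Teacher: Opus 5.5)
Your argument is correct when $\nu$ is diffuse, which is the only case in which the lemma holds (see the caveat below). It is organized differently from the paper's proof.

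\textbf{How the two routes differ.} The paper never integrates an anticipating integrand against $Q^V$. For each $j$ it first conditions on $\mathcal{F}_{T^V_j-}\vee\mathcal{G}_{T^V_j-}$. This replaces the future-dependent factor $\indic{\mathcal{N}_t=j}\indic{P(T^V_j,t,\gamma^V_j)=0}\gamma^V_j(t-T^V_j)$ by $e^{-\nu((T^V_j,t])}$ times the $\mu^V$-average of $\gamma^V(t-T^V_j)\exp(-\int_{T^V_j}^t\gamma^V(r-T^V_j)y(r)dr)$. Only then does it sum over $j$. The result is an integral against $Q^V(dv)$ of $\indic{P(\tau_i,v,\gamma)=0}\indic{v-\tau_i>\eta_i}$ times a deterministic function of $v$. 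That integrand is predictable, so $Q^V(dv)$ can be replaced by its compensator $\nu(dv)$.

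You reverse the order. The Mecke formula removes $Q^V(dv)$ first, with no predictability needed. The factorization is then done at a deterministic time $v$, where the independence of $Q^V|_{(v,t]}$, $Q|_{(v,t]\times\R_+}$, the fresh mark and the pre-$v$ history is elementary. This spares you conditioning at the random times $T^V_j$, a strong-Markov-type step the paper performs without spelling it out. The paper's route, in turn, needs only the compensator identity and parallels Lemmas~\ref{lem:Inf} and~\ref{Lem:exp-II-y}.

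To make your step precise, apply Mecke to the marked process $\sum_j\delta_{(T^V_j,\gamma^V_j)}$, which is Poisson with intensity $\nu\otimes\mu^V$. The added point then carries an independent $\mu^V$-distributed mark. You also correctly read $\gamma^V_j(t-\tau_i)$ in the statement as $\gamma^V_j(t-T^V_j)$, as the paper's proof does.

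\textbf{Caveat on atoms of $\nu$.} The issue you flag, $A(v^-)$ versus $A(v)$, is harmless for every $\nu$: $Q$ has Lebesgue intensity in time, so it does not charge the fixed time $v$. The real difficulty is that when $\nu(\{s\})>0$, $Q^V$ may place several points at $s$. Your rewriting of $\sum_j\indic{\mathcal{N}_t=j}(\cdots)$ as $\int\indic{Q^V((v,t])=0}(\cdots)\,Q^V(dv)$ then counts that time once per point, whereas the left-hand side counts it once.

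This cannot be repaired, because the identity itself fails. Take $\nu=a\delta_s$ with $s<t$. The left-hand side carries the factor $\P(Q^V(\{s\})\geq1)=1-e^{-a}$, while the right-hand side carries $e^{-\nu((s,t])}\nu(\{s\})=a$. The paper's proof has the same hidden restriction: it claims $\{\mathcal{N}_t=j\}$ has conditional probability $e^{-\nu((T^V_j,t])}$, which ignores further points at $T^V_j$.

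So state the assumption that $\nu$ is diffuse on $[0,t]$ rather than claim to cover atoms. Under that assumption your proof is complete.
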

				The challenge lies to compute the following two expectations: 
				\[\E\Big[\indic{A(v)=0}\indic{ v>\eta_{0}}\Big] \text{ and } 
				\E\Big[\sum_{i\geq1}\indic{A(v)=A(\tau_i)}\indic{ v-\tau_i>\eta_{i}}\Big].\]
				Evaluating these terms separately is quite difficult and would require the results on Poisson integrals with non-predictable integrands established in Section~\ref{sec-poiss-integral}. However, to obtain a more tractable expression, we note that:  
				\begin{align}\label{eq-x-EE}
					&\E\Big[\indic{A(v)=0}\indic{ v>\eta_{0}}\Big]+\E\Big[\sum_{i\geq1}\indic{A(v)=A(\tau_i)}\indic{ v-\tau_i>\eta_{i}}\Big]\non\\
					&=\E\Big[\indic{v-\tau_{A(v)}\geq\eta_{A(v)}}\Big]\non\\
					&=1-\E\Big[\indic{v-\tau_{A(v)}\leq\eta_{A(v)}}\Big]\non\\
					&=1-\E\Big[\indic{v\leq\eta_{0}}\exp\left(-\int_{0}^{v}\gamma_0(r)y(r)dr\right)\Big]\non\\
					&\hspace{2cm}-\int_{0}^{v}\E\Big[\indic{v-s\leq\eta}\exp\left(-\int_{s}^{v}\gamma(r-s)y(r)dr\right)\Big]x(s)y(s)ds\non\\
					&:=\mathcal{K}(x,y)(v)
				\end{align}
				where the penultimate equality follows directly from the framework established in Section~\ref{sec-equiv-fF}.
				
				By combining Lemma~\ref{Lem:-II-x} and ~\ref{Lem:-III-x} and expression~\eqref{eq-x-EE}, from~\eqref{eq-gV} we derive the following Lemma.
				\begin{lemma}\label{Lem-gam-II}
					\begin{equation*}
						\begin{aligned}
							&\E\Big[\sum_{j\geq1}\indic{\mathcal{N}_t=j}\gamma_{j}^V(t-T^V_j)\indic{ T^V_{j}-\tau_{A(t)}>\eta_{A(t)}}\Big]\\
							&=\int_{0}^{t}e^{-\nu((v,t])}\mathcal{K}(x,y)(v)
							\E\left[\gamma^V(t-v)\exp\left(-\int_{v}^{t}\gamma^V(r-v)y(r)dr\right)\right]\nu(dv).
						\end{aligned}
					\end{equation*}
				\end{lemma}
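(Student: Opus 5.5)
The identity follows by adding the two contributions in \eqref{eq-gV}: Lemma~\ref{Lem:-II-x} handles the part with $A(t)=0$, and Lemma~\ref{Lem:-III-x} handles the part with $A(t)=i\geq1$. Both lemmas have the same form
\[
\int_0^t e^{-\nu((v,t])}\,\E[\cdots]\,\E\left[\gamma^V(t-v)\exp\left(-\int_v^t\gamma^V(r-v)y(r)dr\right)\right]\nu(dv),
\]
and differ only in the first expectation. By linearity of $\int\cdot\,\nu(dv)$, their sum is a single integral whose weight is
\[
\E\big[\indic{A(v)=0}\indic{v>\eta_0}\big]+\E\Big[\sum_{i\geq1}\indic{A(v)=A(\tau_i)}\indic{v-\tau_i>\eta_i}\Big].
\]
No convergence issue arises, since everything is nonnegative and bounded: $\gamma^V\leq1$, and $\nu$ is locally finite on $[0,t]$.

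Next I identify this weight with $\mathcal{K}(x,y)(v)$ using \eqref{eq-x-EE}. The key observation is that, on the event $A(v)=i$, the indicator $\indic{A(v)=A(\tau_i)}$ equals one, and only the most recent infection index survives. The two expectations therefore recombine into $\P(v-\tau_{A(v)}>\eta_{A(v)})$, the probability that the individual is not infectious at time $v$. I take its complement, $\P(v-\tau_{A(v)}\leq\eta_{A(v)})$, from the computations of Section~\ref{sec-equiv-fF}; these are the same computations that produce \eqref{eq-I} for $\bar I(v)$, with the infectivity replaced by the indicator of the infectious period. The result is the explicit expression defining $\mathcal{K}(x,y)(v)$ in \eqref{eq-Psi-0}.

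There is one subtlety: the boundary case $v-\tau_{A(v)}=\eta_{A(v)}$, where the paper writes $\geq$ in one place and $>$ in another. I would handle it by noting that for fixed $v$ this event has probability zero, or at least that it does not affect the $\nu(dv)$ integral. This holds because $\tau_i$ has a density (it is a jump time of a point process with bounded intensity), so the event has $\P$-measure zero.

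The main obstacle is justifying Lemmas~\ref{Lem:-II-x}--\ref{Lem:-III-x} themselves, in particular the factorization. The issue is that the susceptibility after the last vaccination $T^V_j=v$ evolves independently of the past, given that the individual was eligible at $v$. Here I would use the strong Markov-type decomposition: at a successful vaccination time, a fresh $\gamma^V_j$ is drawn, and the infection rate afterwards is $\gamma^V_j(\cdot-v)y(\cdot)$. The factor $e^{-\nu((v,t])}$ is the probability that there is no later vaccination in $(v,t]$, and the exponential is the probability of no reinfection in $(v,t]$. Since these lemmas are stated earlier, I would take them as given; the present statement then reduces to the summation and substitution described above.
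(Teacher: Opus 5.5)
Your proposal matches the paper's proof: you add Lemmas~\ref{Lem:-II-x} and~\ref{Lem:-III-x} through \eqref{eq-gV}, and you identify the combined weight with $\mathcal{K}(x,y)(v)$ via \eqref{eq-x-EE}, as the complement of the probability of being infectious at time $v$ computed as in Section~\ref{sec-equiv-fF}. The boundary issue you raise does not arise, and your patch for it would be wrong. By definition $\mathcal{K}(x,y)(v)=1-\P(v-\tau_{A(v)}\leq\eta_{A(v)})=\P(v-\tau_{A(v)}>\eta_{A(v)})$, which matches the strict inequalities in the two lemmas exactly; the $\geq$ in \eqref{eq-x-EE} is a typo. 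Your null-set argument also fails for the $A(v)=0$ term: there $\tau_0=0$ has no density, so if $\eta_0$ has an atom that $\nu$ charges, the event $v=\eta_0$ carries positive weight in the $\nu(dv)$ integral.
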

				
				\subsection{Proof of the Lemmas}\label{Lem:proofs}
				In this section, we introduce the following notation. We set,
				\begin{equation}
					\mathcal{F}_t = \sigma\left( (\lambda_i^I, \gamma_i^I)_{0 \leq i \leq A(t')}, A(t'), t' \leq t \right) \quad \text{and} \quad \mathcal{G}_t = \sigma\left( \mathcal{N}_v,\,(\gamma^V_{j})_{1\leq j<\mathcal{N}_v},\,v \leq t \right).
				\end{equation}
				We denote the conditional expectation with respect to the $i$-th infection time by $\mathbb{E}_i[\cdot] := \mathbb{E}\left[ \cdot \, \big| \, \mathcal{F}_{\tau_{i}} \right]$. Furthermore, for $0 \leq s \leq t$ and a measurable function $\phi$, we set:
				\begin{equation}
					P(s, t, \phi) = \int_{s}^{t} \int_{0}^{\phi(r-s)y(r)} Q(dr, du).
				\end{equation}
				
				\subsubsection{proof of Lemma~\ref{Lem:exp-I-y}}\label{Lem:proofs-I}
				
				We denote by $(\tau_i)_i$ the jump time of process $A$.
				
				We have,
				\begin{align*}
					&\E\left[\lambda_{A(t)}^I(t-\tau_{A(t)})\indic{\mathcal{N}_t=0}\right]\\
					&=\E\left[\indic{\mathcal{N}_t=0}\indic{A(t)=0}\lambda_{0}^I(t)\right]+\sum_{i\geq1}\E\left[\indic{\mathcal{N}_t=0}\indic{\tau_i\leq t}\indic{A(t)=i}\lambda_{i}^I(t-\tau_{i})\right].
				\end{align*}
				By definition, for all $t\geq0,$ when $\mathcal{N}_t=0,\,\forall r\in[0,t],\,\gamma_{A(r),\mathcal{N}_r}(r)=\gamma_{i,0}(r)=\gamma^I_{i}(r-\tau_{i}).$ Consequently, by definition of the process $A,$ and the fact that the pair $(Q,\gamma_i^I)$ and $Q^V$ are independent, it follows that,
				\begin{align*}
					&\mathbb{P}\left(A(t)=i,\,Q^V([0,t])=0\big|\mathcal{F}_{\tau_i}\right)\indic{\tau_i\leq t}\\
					&=\mathbb{P}\left(Q\left((s,u)\in\R^2_+,\,\tau_i\leq s\leq t,\,\gamma_i^I(r-\tau_i)y(r)\geq u\right)=0,\,Q^V([0,t])=0\big|\mathcal{F}_{\tau_i}\right)\indic{\tau_i\leq t}\\
					&=e^{-\nu([0,t])}\mathbb{P}\left(Q\left((s,u)\in\R^2_+,\,\tau_i\leq s\leq t,\,\gamma_i^I(r-\tau_i)y(r)\geq u\right)=0\big|\mathcal{F}_{\tau_i}\right)\indic{\tau_i\leq t}\\
					&=e^{-\nu([0,t])}\exp\left(-\int_{\tau_i}^{t}\gamma_i^I(r-\tau_i)y(r)dr\right)\indic{\tau_i\leq t},
				\end{align*}
				since $Q\big|_{(\tau_i,t]}$ and $\mathcal{F}_{\tau_i}$ are independent.
				
				Thus, as $(\lambda_0^I,\gamma_0^I)$ and $Q$ are independent, it follows that,
				\begin{align*}
					\E\left[\lambda_{A(t)}^I(t-\tau_{A(t)})\indic{\mathcal{N}_t=0}\right]
					&=e^{-\nu([0,t])}\E\left[\lambda_{0}^I(t)\exp\left(-\int_{0}^{t}\gamma^I_0(r)y(r)dr\right)\right]\\
					&\hspace{0.5cm}+e^{-\nu([0,t])}\sum_{i\geq1}\E\left[\indic{\tau_i\leq t}\lambda_{i}^I(t-\tau_{i})\exp\left(-\int_{\tau_i}^{t}\gamma^I_i(r-\tau_i)y(r)dr\right)\right].
				\end{align*}
				Since $(\lambda^I_i,\gamma^I_i)$ and $\tau_i$ are independent, recalling that the law of $(\lambda^I,\gamma^I)$ is denoted by $\mu^I$, we obtain
				\begin{align*}
					&\E\left[\lambda_{A(t)}^I(t-\tau_{A(t)})\indic{\mathcal{N}_t=0}\right]=e^{-\nu([0,t])}\E\left[\lambda_{0}^I(t)\exp\left(-\int_{0}^{t}\gamma^I_0(r)y(r)dr\right)\right]\\&+e^{-\nu([0,t])}\sum_{i\geq1}\E\left[\indic{\tau_i\leq t}\int_{D^2}\lambda^I(t-\tau_{i})\exp\left(-\int_{\tau_i}^{t}\gamma^I(r-\tau_i)y(r)dr\right)\mu^I(d\lambda^I,d\gamma^I)\right]
				\end{align*}
				In addition as $(\tau_i)$ are the jump time of process $A,$ by Fubuni's Theorem's we have,
				\begin{multline*}
					\E\left[\lambda_{A(t)}^I(t-\tau_{A(t)})\indic{\mathcal{N}_t=0}\right]\\
					\begin{aligned}
						&=e^{-\nu([0,t])}\E\left[\lambda_{0}^I(t)\exp\left(-\int_{0}^{t}\gamma^I_0(r)y(r)dr\right)\right]\\&\quad+e^{-\nu([0,t])}\int_{D^2}\E\left[\int_{0}^{t}\lambda^I(t-s)\exp\left(-\int_{s}^{t}\gamma^I(r-s)y(r)dr\right)A(ds)\right]\mu^I(d\lambda^I,d\gamma^I)\\
						&=e^{-\nu([0,t])}\E\left[\lambda_{0}^I(t)\exp\left(-\int_{0}^{t}\gamma^I_0(r)y(r)dr\right)\right]\\&\quad+e^{-\nu([0,t])}\int_{D^2}\E\left[\int_{0}^{t}\lambda^I(t-s)\exp\left(-\int_{s}^{t}\gamma^I(r-s)y(r)dr\right)\gamma_{A(s),\mathcal{N}_s}(s)y(s)ds\right]\mu^I(d\lambda^I,d\gamma^I)\\
						&=e^{-\nu([0,t])}\E\left[\lambda_{0}^I(t)\exp\left(-\int_{0}^{t}\gamma^I_0(r)y(r)dr\right)\right]\\&\quad+e^{-\nu([0,t])}\int_{0}^{t}\int_{D^2}\lambda^I(t-s)\exp\left(-\int_{s}^{t}\gamma^I(r-s)y(r)dr\right)\mu^I(d\lambda^I,d\gamma^I)\E\left[\gamma_{A(s),\mathcal{N}_s}(s)\right]y(s)ds\\
					\end{aligned}
				\end{multline*}
				\subsubsection{proof of Lemma~\ref{lem:Inf}}\label{Lem:proofs-II}
				We have
				\begin{align*}
					&\E\Big[\sum_{j\geq1}\indic{\mathcal{N}_t=j}\lambda_{A(t)}^I(t-\tau_{A(t)})\indic{\tau_{A(t)}\leq T^V_j\leq t}\indic{ T^V_{j}-\tau_{A(t)}\leq\eta_{A(t)}}\Big]\\
						&=\sum_{j\geq1}\E\Big[\indic{A(t)=0}\indic{\mathcal{N}_t=j}\lambda_{0}^I(t)\indic{ T^V_{j}\leq\eta_0}\Big]\\
						&\hspace{1cm}+\sum_{j\geq1}\sum_{i\geq1}\E\left[\indic{A(t)=i}\indic{\mathcal{N}_t=j}\lambda_{i}^I(t-\tau_{i})\indic{\tau_i\leq t}\indic{T^V_{j}-\tau_{i}\leq\eta_{i}}\right]\\		
						&=\sum_{j\geq1}\E\left[\lambda_{0}^I(t)\mathbb{P}\big(A(t)=0,\,Q^V((T^V_j,t])=0\big|\mathcal{F}_0\vee\mathcal{G}_{T^V_j}\big) \indic{T^V_j\leq\eta_{0}}\indic{T^V_j\leq t}\right]\\
						&\hspace{1cm}+\sum_{i\geq1}\sum_{j\geq1}\E\left[\lambda_{i}^I(t-\tau_i)\mathbb{P}\big(A(t)=i,\,Q^V((T^V_j,t])=0\big|\mathcal{F}_{\tau_i}\vee\mathcal{G}_{T^V_j}\big)\indic{T^V_j-\tau_i\leq\eta_{i}}\indic{\tau_i\leq T^V_j\leq t}\right]
				\end{align*}
				Note that as $Q^V((T^V_j,t])=0$ and $T^V_j-\tau_i\leq\eta_{i},$ by definition of $\forall r\in[\tau_i,t],\,\gamma_{A(r),\mathcal{N}_r}(r)=\gamma_{i,j}(r)=\gamma_i^I(r-\tau_i)$. Then, by definition of the process $A,$ and the fact that $Q$ and $Q^V$ are independent, 
				\begin{align*}
					&\mathbb{P}\big(A(t)=i,\,Q^V((T^V_j,t])=0\big|\mathcal{F}_{\tau_i}\vee\mathcal{G}_{T^V_j}\big)\indic{T^V_j-\tau_i\leq \eta_i}\indic{\tau_i\leq T^V_j\leq t}\\
					&=\mathbb{P}\left(Q\left((s,u)\in\R^2_+,\,\tau_i\leq s\leq t,\,\gamma_i^I(r-\tau_i)y(r)\geq u\right)=0,\,Q^V((T^V_j,t])=0\big|\mathcal{F}_{\tau_i}\vee\mathcal{G}_{T^V_j}\right)\\
					&\hspace{3cm}\times\indic{T^V_j-\tau_i\leq \eta_i}\indic{\tau_i\leq T^V_j\leq t}\\
					&=e^{-\nu((T^V_j,t])}\mathbb{P}\left(Q\left((s,u)\in\R^2_+,\,\tau_i\leq s\leq t,\,\gamma_i^I(r-\tau_i)y(r)\geq u\right)=0\big|\mathcal{F}_{\tau_i}\right)\indic{T^V_j-\tau_i\leq \eta_i}\indic{\tau_i\leq T^V_j\leq t}\\
					&=e^{-\nu((T^V_j,t])}\exp\left(-\int_{\tau_i}^{t}\gamma_i^I(r-\tau_i)y(r)dr\right)\indic{T^V_j-\tau_i\leq \eta_i}\indic{\tau_i\leq T^V_j\leq t},
				\end{align*}
				since $Q\big|_{(\tau_i,t]}$ and $\mathcal{F}_{\tau_i}$ are independent.
				
				As a result, we have the following equality,
				\begin{multline*}
					\begin{aligned}
						&=\sum_{j\geq1}\E\left[e^{-\nu((T^V_j,t])}\indic{T^V_j\leq\eta_{0}}\indic{T^V_j\leq t}\lambda_{0}^I(t)\exp\left(-\int_{0}^{t}\gamma_0^I(r)y(r)dr\right)\right]\\
						&\hspace{1cm}+\sum_{i\geq1}\sum_{j\geq1}\E\left[e^{-\nu((T^V_j,t])}\indic{T^V_j-\tau_i\leq\eta_{i}}\indic{\tau_{i}\leq T^V_j\leq t}\lambda_{i}^I(t-\tau_i)\exp\left(-\int_{\tau_i}^{t}\gamma_i^I(r-\tau_i)y(r)dr\right)\right]\\
						&=\E\left[\int_{0}^t e^{-\nu((v,t])}\indic{v\leq\eta_{0}}\lambda_{0}^I(t)\exp\left(-\int_{0}^{t}\gamma_0^I(r)y(r)dr\right)Q^V(dv)\right]\\
						&\hspace{1cm}+\sum_{i\geq1}\E\left[\int_{\tau_{i}}^{t}e^{-\nu((v,t])}\lambda_{i}^I(t-\tau_i)\indic{v-\tau_i\leq\eta_{i}}\exp\left(-\int_{\tau_i}^{t}\gamma_i^I(r-\tau_i)y(r)dr\right)Q^V(dv)\right]\\
				&=\int_{0}^{t}e^{-\nu((v,t])}\E\left[\lambda_{0}^I(t)\indic{v\leq\eta_{0}}\exp\left(-\int_{0}^{t}\gamma_{0}^I(r)y(r)dr\right)\right]\nu(dv)\\
				&\hspace{1cm}+\sum_{i\geq1}\E\left[\indic{\tau_{i}\leq t}\int_{\tau_{i}}^{t}e^{-\nu((v,t])}\lambda_{i}^I(t-\tau_i)\indic{v-\tau_i\leq\eta_{i}}\exp\left(-\int_{\tau_i}^{t}\gamma_{i}^I(r-\tau_i)y(r)dr\right)\nu(dv)\right].	
				\end{aligned}
			\end{multline*}
		Since $(\lambda_i^I,\gamma_i^I)$ and $\tau_i$ are independent, recalling that the law of $(\lambda^I,\gamma^I)$ is denoted by $\mu^I$, we obtain
	\begin{multline*}
	\begin{aligned}
	&=\int_{0}^{t}e^{-\nu((v,t])}\E\left[\lambda_{0}^I(t)\indic{v\leq\eta_{0}}\exp\left(-\int_{0}^{t}\gamma_{0}^I(r)y(r)dr\right)\right]\nu(dv)\\
	&\hspace{0.2cm}+\sum_{i\geq1}\int_{D^2}\E\Bigg[\indic{\tau_{i}\leq t}\int_{\tau_{i}}^{t}e^{-\nu((v,t])}\lambda^I(t-\tau_i)\indic{v-\tau_i\leq\eta}\\
	&\hspace{3cm}\times\exp\left(-\int_{\tau_i}^{t}\gamma^I(r-\tau_i)y(r)dr\right)\nu(dv)\Bigg]\mu^I(d\lambda^I,d\gamma^I)
	\end{aligned}
\end{multline*}
Moreover, as $(\tau_i)_i$ is the jump time of process $A$, we have,
\begin{multline*}
\begin{aligned}
&=\int_{0}^{t}e^{-\nu((v,t])}\E\left[\lambda_{0}^I(t)\indic{v\leq\eta_{0}}\exp\left(-\int_{0}^{t}\gamma_{0}^I(r)y(r)dr\right)\right]\nu(dv)\\
&\hspace{0.2cm}+\int_{D^2}\E\Bigg[\int_{0}^{t}\int_{w}^{t}e^{-\nu((v,t])}\lambda^I(t-w)\indic{v-w\leq\eta}\\
&\hspace{3cm}\times\exp\left(-\int_{w}^{t}\gamma^I(r-w)y(r)dr\right)\nu(dv)A(dw)\Bigg]\mu^I(d\lambda^I,d\gamma^I).
\end{aligned}
\end{multline*}
By a martingale argument as in \cite[Section~$6$, page~$26$]{forien-Zotsa2022stochastic} we obtain,
\begin{multline*}
\begin{aligned}
&=\int_{0}^{t}e^{-\nu((v,t])}\E\left[\lambda_{0}^I(t)\indic{v\leq\eta_{0}}\exp\left(-\int_{0}^{t}\gamma_{0}^I(r)y(r)dr\right)\right]\nu(dv)\\
&\hspace{0cm}+\int_{D^2}\E\left[\int_{0}^{t}\int_{w}^{t}e^{-\nu((v,t])}\lambda^I(t-w)\indic{v-w\leq\eta}\exp\left(-\int_{w}^{t}\gamma^I(r-w)y(r)dr\right)\nu(dv)\right.\\&\hspace{8cm}\left.\gamma_{A(w),\mathcal{N}_w}(w)y(w)dw\right]\mu^I(d\lambda^I,d\gamma^I)\\
&=\int_{0}^{t}e^{-\nu((v,t])}\E\left[\lambda_{0}^I(t)\indic{v\leq\eta_{0}}\exp\left(-\int_{0}^{t}\gamma_{0}^I(r)y(r)dr\right)\right]\nu(dv)\\
&\hspace{1cm}+\int_{0}^{t}\int_{w}^{t}e^{-\nu((v,t])}\E\left[\lambda^I(t-w)\indic{v-w\leq\eta}\exp\left(-\int_{w}^{t}\gamma^I(r-w)y(r)dr\right)\right]\nu(dv)x(w)y(w)dw.
\end{aligned}
\end{multline*}
\subsubsection{proof of Lemma~\ref{Lem:exp-II-y}}\label{Lem:proofs-III}
We have
\begin{align*}
&\E\Big[\sum_{j\geq1}\indic{\mathcal{N}_t=j}\lambda_{A(t)}^I(t-\tau_{A(t)})\indic{T^V_j\leq \tau_{A(t)}\leq t}\indic{ T^V_{j}-\tau_{A(t)}\leq\eta_{A(t)}}\Big]\\
&\hspace{2cm}=\sum_{i\geq1}\sum_{j\geq1}\E\left[\indic{A(t)=i}\indic{T^V_j\leq\tau_i\leq t}\indic{\mathcal{N}_t=j}\lambda_{i}^I(t-\tau
_{i})\right].
\end{align*}
Note that as $Q^V([T^V_j,t])=0$ and $T^V_j\leq\tau_i,$ by definition $\forall r\in[\tau_i,t],\,\gamma_{A(r),\mathcal{N}_r}(r)=\gamma_{i,j}(r)=\gamma_i^I(r-\tau_i)$. Consequently, by definition of the process $A$ and the fact that $Q$ and $Q^V$ are independent, we have,

\begin{align*}
&\mathbb{P}(A(t)=A(\tau_i),Q^V((T^V_j,t])=0\big|\mathcal{F}_{\tau_i}\vee\mathcal{G}_{T^V_j})\indic{T^V_j\leq\tau_i\leq t}\\
&=e^{-\nu((T^V_j,t])}\mathbb{P}\left(Q\left((s,u)\in\R^2_+,\,\tau_i\leq s\leq t,\,\gamma_i^I(r-\tau_i)y(r)\geq u\right)=0\big|\mathcal{F}_{\tau_i}\right)\indic{T^V_j\leq\tau_i\leq t}\\
&=e^{-\nu((T^V_j,t])}\exp\left(-\int_{\tau_i}^{t}\gamma_i^I(r-\tau_i)y(r)dr\right)\indic{T^V_j\leq\tau_i\leq t},
\end{align*}
since $Q\big|_{(\tau_i,t]}$ and $\mathcal{F}_{\tau_i}$ are independent.

As a result, as in the proof of Lemma~\ref{lem:Inf}
\begin{multline*}
\E\Big[\sum_{j\geq1}\indic{\mathcal{N}_t=j}\lambda_{A(t)}^I(t-\tau_{A(t)})\indic{T^V_j\leq \tau_{A(t)}\leq t}\indic{ T^V_{j}-\tau_{A(t)}\leq\eta_{A(t)}}\Big]\\
\begin{aligned}
&=\sum_{i\geq1}\sum_{j\geq1}\E\left[\indic{T^V_j\leq\tau_{i}\leq t}\lambda_{i}^I(t-\tau
_{i})\mathbb{P}(A(t)=A(\tau_i),Q^V((T^V_j,t])=0\big|\mathcal{F}_{\tau_i}\vee\mathcal{G}_{T^V_j})\right]\\
&=\sum_{i\geq1}\E\left[\indic{\tau_{i}\leq t}\int_0^{\tau_{i}}e^{-\nu((v,t])}\lambda_{i}^I(t-\tau_i)\exp\left(-\int_{\tau_i}^{t}\gamma_{i}^I(r-\tau_i)y(r)dr\right)Q^V(dv)\right]\\
&=\sum_{i\geq1}\E\left[\indic{\tau_{i}\leq t}\int_0^{\tau_{i}}e^{-\nu((v,t])}\lambda_{i}^I(t-\tau_i)\exp\left(-\int_{\tau_i}^{t}\gamma_{i}^I(r-\tau_i)y(r)dr\right)\nu(dv)\right]\\
&=\sum_{i\geq1}\int_{D^2}\E\left[\indic{\tau_{i}\leq t}\int_0^{\tau_{i}}e^{-\nu((v,t])}\lambda^I(t-\tau_i)\exp\left(-\int_{\tau_i}^{t}\gamma^I(r-\tau_i)y(r)dr\right)\nu(dv)\right]\mu^I(d\lambda^I,d\gamma^I),
\end{aligned}
\end{multline*}
where the last line follows from the fact that $(\lambda_i^I,\gamma_i^I)$ and $\tau_i$ are independent.

Moreover, as $(\tau_i)_i$ is the jump time of process $A$, we have the following equality,
\begin{multline*}
\begin{aligned}
&=\int_{D^2}\E\left[\int_{0}^{t}\int_{0}^{w}e^{-\nu((v,t])}\lambda^I(t-w)\exp\left(-\int_{w}^{t}\gamma^I(r-w)y(r)dr\right)\nu(dv)A(dw)\right]\mu^I(d\lambda^I,d\gamma^I).
\end{aligned}
\end{multline*}
By a martingale argument as in \cite[Section~$6$, page~$26$]{forien-Zotsa2022stochastic} we obtain,
\begin{multline*}
\begin{aligned}
&=\int_{D^2}\E\left[\int_{0}^{t}\int_{0}^{w}e^{-\nu((v,t])}\lambda^I(t-w)\exp\left(-\int_{w}^{t}\gamma^I(r-w)y(r)dr\right)\nu(dv)\right.\\&\hspace{8cm}\left.\gamma_{A(w),\mathcal{N}_w}(w)y(w)dw\right]\mu^I(d\lambda^I,d\gamma^I)\\
&=\int_{0}^{t}\int_{0}^{w}e^{-\nu((v,t])}\E\left[\lambda^I(t-w)\exp\left(-\int_{w}^{t}\gamma^I(r-w)y(r)dr\right)\right]\nu(dv)x(w)y(w)dw.
\end{aligned}
\end{multline*}

\subsubsection{proof of Lemma~\ref{Lem:-II-x}}\label{Lem:proofs-IV}
Note that, as $\mathcal{N}_t=j$ and $T^V_j>\eta_0,$ by definition for all $r\in[T^V_j,t],\,\gamma_{A(r),\mathcal{N}_r}(r)=\gamma_{i,j}(r)=\gamma^V_j(r-T^V_j).$ As a result, as $Q^V$ and $Q$ are independent and $\gamma^V_j$ and $T^V_j$ are also independent, we have
\begin{multline*}
\E\Big[\indic{A(t)=0}\sum_{j\geq1}\indic{\mathcal{N}_t=j}\gamma_{j}^V(t-T^V_j)\indic{ T^V_{j}>\eta_{0}}\Big]\\
\begin{aligned}
&=\E\Big[\sum_{j\geq1}\indic{P(0,T^V_j,\gamma)=0}\indic{P(T^V_j,t,\gamma^V_j)=0}\indic{\mathcal{N}_t=j}\gamma_{j}^V(t-T^V_j)\indic{ T^V_{j}>\eta_{0}}\Big]\\
&=\E\Big[\sum_{j\geq1}\indic{P(0,T^V_j,\gamma)=0}\E\Big[\indic{\mathcal{N}_t=j}\indic{P(T^V_j,t,\gamma^V_j)=0}\gamma_{j}^V(t-T^V_j)\big|\mathcal{F}_{T^V_j-}\vee\mathcal{G}_{T^V_j-}\Big]\indic{ T^V_{j}>\eta_{0}}\Big]\\
&=\E\Big[\sum_{j\geq1}\indic{P(0,T^V_j,\gamma)=0}\gamma^V_j(t-T^V_j)e^{-\nu((T^V_j,t])}\exp\left(-\int_{T^V_j}^{t}\gamma^V_j(r-T^V_j)y(r)dr\right)\indic{ T^V_{j}>\eta_{0}}\Big]\\
&=\E\Big[\sum_{j\geq1}\indic{P(0,T^V_j,\gamma)=0}\E\Big[\gamma^V_j(t-T^V_j)\exp\left(-\int_{T^V_j}^{t}\gamma^V_j(r-T^V_j)y(r)dr\right)\big|\mathcal{F}_{T^V_j}\Big]\\
&\hspace{6cm}\times e^{-\nu((T^V_j,t])}\indic{ T^V_{j}>\eta_{0}}\Big]\\
&=\E\Big[\sum_{j\geq1}\indic{P(0,T^V_j,\gamma)=0}\int_{D}\gamma^V(t-T^V_j)\exp\left(-\int_{T^V_j}^{t}\gamma^V(r-T^V_j)y(r)dr\right)\mu^V(d\gamma^V)\\&\hspace{6cm}\times e^{-\nu((T^V_j,t])}\indic{ T^V_{j}>\eta_{0}}\Big]
\end{aligned}
\end{multline*}
Consequently, by definition of the Poisson random measure $Q^V$, we obtain
\begin{multline*}
\begin{aligned}
&=\E\Big[\int_0^t\indic{P(0,v,\gamma)=0}\int_{D}\gamma^V(t-v)\exp\left(-\int_{v}^{t}\gamma^V(r-v)y(r)dr\right)\mu^V(d\gamma^V)e^{-\nu((v,t])}\indic{ v>\eta_{0}}Q^V(dv)\Big]\\
&=\E\Big[\int_0^t\indic{P(0,v,\gamma)=0}\int_{D}\gamma^V(t-v)\exp\left(-\int_{v}^{t}\gamma^V(r-v)y(r)dr\right)\mu^V(d\gamma^V)e^{-\nu((v,t])}\indic{ v>\eta_{0}}\nu(dv)\Big]\\
&=\int_0^te^{-\nu((v,t])}\E\Big[\indic{P(0,v,\gamma)=0}\indic{ v>\eta_{0}}\Big]\int_{D}\gamma^V(t-v)\exp\left(-\int_{v}^{t}\gamma^V(r-v)y(r)dr\right)\mu^V(d\gamma^V)\nu(dv)
\end{aligned}
\end{multline*} 
This concludes the proof of the Lemma.

\subsubsection{proof of Lemma~\ref{Lem:-III-x}}\label{Lem:proofs-V}
Note that, as $\mathcal{N}_t=j$ and $T^V_j-\tau_i>\eta_i,$ by definition of $\gamma$, for all $r\in[T^V_j,t],\,\gamma_{i,j}(r)=\gamma^V_j(r-T^V_j).$ As a result, as $Q^V$ and $Q$ are independent and $\gamma^V_j$ and $T^V_j$ are also independent, we have
\begin{multline*}
\sum_{i\geq1}\E\Big[\indic{A(t)=i}\sum_{j\geq1}\indic{\mathcal{N}_t=j}\gamma_{j}^V(t-\tau_{i})\indic{ T^V_{j}-\tau_{i}>\eta_{i}}\Big]\\
\begin{aligned}
&=\sum_{i\geq1}\sum_{j\geq1}\E\Big[\indic{P(\tau_i,T^V_j,\gamma)=0}\indic{P(T^V_j,t,\gamma^V_j)=0}\indic{\mathcal{N}_t=j}\gamma_{j}^V(t-\tau_{i})\indic{ T^V_{j}-\tau_{i}>\eta_{i}}\Big]\\
&=\sum_{i\geq1}\sum_{j\geq1}\E\Big[\indic{P(\tau_i,T^V_j,\gamma)=0}\E\Big[\indic{\mathcal{N}_t=j}\indic{P(T^V_j,t,\gamma^V_j)=0}\gamma_{j}^V(t-T^V_j)\big|\mathcal{F}_{T^V_j-}\vee\mathcal{G}_{T^V_j-}\Big]\indic{ T^V_{j}-\tau_i>\eta_{i}}\Big]\\
&=\sum_{i\geq1}\sum_{j\geq1}\E\Bigg[\indic{P(\tau_{i},T^V_j,\gamma)=0}\E\Big[\gamma^V_j(t-T^V_j)\exp\left(-\int_{T^V_j}^{t}\gamma^V_j(r-T^V_j)y(r)dr\right)\big|\mathcal{F}_{T^V_j-}\Big]\\
&\hspace{3cm}\times e^{-\nu((T^V_j,t])}\indic{ T^V_{j}-\tau_i>\eta_{i}}\Bigg]\\
&=\sum_{i\geq1}\sum_{j\geq1}\E\Big[\indic{P(\tau_i,T^V_j,\gamma)=0}\int_{D}\gamma^V(t-T^V_j)\exp\left(-\int_{T^V_j}^{t}\gamma^V(r-T^V_j)y(r)dr\right)\mu^V(d\gamma^V)\\
&\hspace{5cm}\times e^{-\nu((T^V_j,t])}\indic{ T^V_{j}-\tau_i>\eta_{i}}\Big]\\
&=\sum_{i\geq1}\E\Big[\int_{0}^{t}\indic{P(\tau_i,v,\gamma)=0}\indic{ v-\tau_i>\eta_{i}}\int_{D}\gamma^V(t-v)\exp\left(-\int_{v}^{t}\gamma^V(r-v)y(r)dr\right)\mu^V(d\gamma^V)\\
&\hspace{6cm}\times e^{-\nu((v,t])}Q^V(dv)\Big]\\
&=\sum_{i\geq1}\E\Bigg[\int_{0}^{t}\indic{P(\tau_i,v,\gamma)=0}\indic{ v-\tau_i>\eta_{i}}\int_{D}\gamma^V(t-v)\exp\left(-\int_{v}^{t}\gamma^V(r-v)y(r)dr\right)\mu^V(d\gamma^V)\\
&\hspace{4cm}\times e^{-\nu((v,t])}\nu(dv)\Bigg]\\
&=\int_{0}^{t}\E\Big[\sum_{i\geq1}\indic{P(\tau_i,v,\gamma)=0}\indic{ v-\tau_i>\eta_{i}}\Big]\int_{D}\gamma^V(t-v)\exp\left(-\int_{v}^{t}\gamma^V(r-v)y(r)dr\right)\mu^V(d\gamma^V)\\
&\hspace{7cm}\times e^{-\nu((v,t])}\nu(dv).
\end{aligned}
\end{multline*}
This concludes the proof.

\bibliographystyle{plain}
\bibliography{biblio.bib}

@article{bolatova2024mathematical,
	title={Mathematical modeling of infectious diseases and the impact of vaccination strategies},
	author={Bolatova, Diana and Kadyrov, Shirali and Kashkynbayev, Ardak},
	journal={Mathematical Biosciences and Engineering},
	volume={21},
	number={9},
	pages={7103},
	year={2024},
	publisher={American Institute of Mathematical Sciences}
}

@article{ma2024stability,
	title={Stability analysis of SIRS model considering pulse vaccination and elimination disturbance},
	author={Ma, Yanli and Zuo, Xuewu},
	journal={Journal of Mathematics},
	volume={2024},
	number={1},
	pages={6617911},
	year={2024},
	publisher={Wiley Online Library}
}

@article{breton2014factorial,
	title={Factorial moments of point processes},
	author={Breton, Jean-Christophe and Privault, Nicolas},
	journal={Stochastic Processes and their Applications},
	volume={124},
	number={10},
	pages={3412--3428},
	year={2014},
	publisher={Elsevier}
}

@article{privault2012moments,
	title={Moments of Poisson stochastic integrals with random integrands},
	author={Privault, Nicolas},
	journal={arXiv preprint arXiv:1204.4854},
	year={2012}
}

@article{decreusefond2014moment,
	title={Moment formulae for general point processes},
	author={Decreusefond, Laurent and Flint, Ian},
	journal={Journal of Functional Analysis},
	volume={267},
	number={2},
	pages={452--476},
	year={2014},
	publisher={Elsevier}
}

@article{hadjipanayis2019compliance,
	title={Compliance with vaccination schedules},
	author={Hadjipanayis, Adamos},
	journal={Human Vaccines \& Immunotherapeutics},
	volume={15},
	number={4},
	pages={1003--1004},
	year={2019},
	publisher={Taylor \& Francis},
	doi={10.1080/21645515.2018.1556078}
}

@misc{who2016grisp,
	title={Global Routine Immunization Strategies and Practices (GRISP)},
	author={World Health Organization},
	year={2016},
	howpublished={\url{https://apps.who.int/iris/handle/10665/204500}},
	note={Accessed November 2025}
}

@misc{eberhard1990nonlinear,
	title={Nonlinear functional analysis and its applications},
	author={Eberhard, Zeidler},
	year={1990},
	publisher={Springer Verlag}
}

@book{precup2002methods,
	title={Methods in nonlinear integral equations},
	author={Precup, Radu},
	year={2002},
	publisher={Springer Science \& Business Media}
}

@article{forster2014leray,
	title={Leray-Schauder Existence Theory for Quasilinear Elliptic Equations},
	author={Forster, Sam and Gleeson, Eavan and Hoffmann, Franca},
	journal={Lecture notes},
	year={2014}
}

@article{cherkas1970compactness,
	title={Compactness in $L^\infty$ spaces},
	author={Cherkas, Barry M},
	journal={Proceedings of the American Mathematical Society},
	volume={25},
	number={2},
	pages={347--350},
	year={1970},
	publisher={JSTOR}
}

@article{jin2008pulse,
	title={Pulse vaccination in the periodic infection rate SIR epidemic model},
	author={Jin, Zhen and Haque, Mainul and Liu, Quanxing},
	journal={International Journal of Biomathematics},
	volume={1},
	number={04},
	pages={409--432},
	year={2008},
	publisher={World Scientific}
}

@article{shulgin1998pulse,
	title={Pulse vaccination strategy in the SIR epidemic model},
	author={Shulgin, Boris and Stone, Lewi and Agur, Zvia},
	journal={Bulletin of mathematical biology},
	volume={60},
	number={6},
	pages={1123--1148},
	year={1998},
	publisher={Springer}
}

@article{meng2010delay,
	title={A delay SIR epidemic model with pulse vaccination and incubation times},
	author={Meng, Xinzhu and Chen, Lansun and Wu, Bo},
	journal={Nonlinear analysis: real world applications},
	volume={11},
	number={1},
	pages={88--98},
	year={2010},
	publisher={Elsevier}
}

@phdthesis{nagy2011epidemic,
	title={Epidemic models with pulse vaccination and time delay},
	author={Nagy, Lisa},
	year={2011},
	school={University of Waterloo}
}

@incollection{liu2017pulse,
	title={Pulse Control Strategies},
	author={Liu, Xinzhi and Stechlinski, Peter},
	booktitle={Infectious Disease Modeling: A Hybrid System Approach},
	pages={179--226},
	year={2017},
	publisher={Springer}
}

@article{wang2014pulse,
	title={On pulse vaccine strategy in a periodic stochastic SIR epidemic model},
	author={Wang, Fengyan and Wang, Xiaoyi and Zhang, Shuwen and Ding, Changming},
	journal={Chaos, Solitons \& Fractals},
	volume={66},
	pages={127--135},
	year={2014},
	publisher={Elsevier}
}

@article{gao2006analysis,
	title={Analysis of a delayed epidemic model with pulse vaccination and saturation incidence},
	author={Gao, Shujing and Chen, Lansun and Nieto, Juan J and Torres, Angela},
	journal={Vaccine},
	volume={24},
	number={35-36},
	pages={6037--6045},
	year={2006},
	publisher={Elsevier}
}

@article{stone2000theoretical,
	title={Theoretical examination of the pulse vaccination policy in the SIR epidemic model},
	author={Stone, L and Shulgin, B and Agur, Zvia},
	journal={Mathematical and computer modelling},
	volume={31},
	number={4-5},
	pages={207--215},
	year={2000},
	publisher={Elsevier}
}

@article{gao2007analysis,
	title={Analysis of an SIR epidemic model with pulse vaccination and distributed time delay},
	author={Gao, Shujing and Teng, Zhidong and Nieto, Juan J and Torres, Angela},
	journal={BioMed Research International},
	volume={2007},
	number={1},
	pages={064870},
	year={2007},
	publisher={Wiley Online Library}
}

@article{CLT_zotsa2025stochastic,
	author = {Zotsa Ngoufack, Arsene Brice},
	title = {Functional central limit theorems and SPDE for epidemic model with memory on the last infection and waning immunity},
	journal = {Arxiv preprint arXiv:2505.15617},
	year = 2025
}

@article{ngoufack2025functional,
	title={Functional Central Limit Theorems for epidemic models with varying infectivity and waning immunity},
	author={Zotsa Ngoufack, Arsene Brice},
	journal={ESAIM: Probability and Statistics},
	volume={29},
	pages={45--112},
	year={2025},
	publisher={EDP Sciences}
}

@book{gripenberg_volterra_1990,
	title = {Volterra Integral and Functional Equations},
	author = {Gripenberg, Gustaf and Londen, Stig-Olof and Staffans, Olof},
	year = {1990},
	number = {34},
	publisher = {Cambridge University Press}
}

@book{billingsley1999convergence,
  title={Convergence of probability measures},
  author={Billingsley, Patrick},
  year={1999},
  publisher={John Wiley \& Sons},
}

@article{forien2026stochastic,
	title={Stochastic epidemic model with varying infectivity and waning immunity: the law of large numbers with unbounded infectivity},
	author={Forien, Rapha{\"e}l and Pardoux, {\'E}tienne},
	journal={arXiv preprint arXiv:2606.11845},
	year={2026}
}

@article{baghdadi2026stochastic,
	title={Stochastic epidemic model for malaria: The law of large numbers},
	author={Baghdadi, Othmane and Pardoux, {\'E}tienne},
	journal={Discrete and Continuous Dynamical Systems},
	volume={58},
	pages={323--355},
	year={2026},
	publisher={Discrete and Continuous Dynamical Systems}
}

@article{pang2022functional,
title={Functional limit theorems for non-Markovian epidemic models},
author={Pang, Guodong and Pardoux, {\'E}tienne},
journal={The Annals of Applied Probability},
volume={32},
number={3},
pages={1615--1665},
year={2022},
publisher={Institute of Mathematical Statistics}
}

@book{ccinlar2011probability,
  title={Probability and {S}tochastics},
  author={{\c{C}}{\i}nlar, Erhan},
  volume={261},
  year={2011},
  publisher={Springer Science \& Business Media},
}

@book{britton2019stochastic,
  title={Stochastic epidemic models with inference},
  author={Britton, Tom and Pardoux, Etienne and Ball, Franck and Laredo, Catherine and Sirl, David and Tran, Viet Chi},
  volume={132},
  year={2019},
  publisher={Springer},
}

@article{forien_epidemic_2021,
  title={Epidemic models with varying infectivity},
  author={Forien, Rapha{\"e}l and Pang, Guodong and Pardoux, {\'E}tienne},
  journal={SIAM Journal on Applied Mathematics},
  volume={81},
  number={5},
  pages={1893--1930},
  year={2021},
  publisher={SIAM},
}

@article{forien-Zotsa2022stochastic,
	title={Stochastic epidemic models with varying infectivity and waning immunity},
	author={Forien, Rapha{\"e}l and Pang, Guodong and Pardoux, Etienne and Zotsa-Ngoufack,Arsene-Brice},
	journal={The Annals of Applied Probability},
	volume={35},
	number={3},
	pages={2175--2216},
	year={2025},
	publisher={Institute of Mathematical Statistics}
}

@article{rao1963law,
  title={The Law of Large Numbers for {$D[0,1]$}-Valued Random Variables},
  author={Rao, R Ranga},
  journal={Theory of Probability \& Its Applications},
  volume={8},
  number={1},
  pages={70--74},
  year={1963},
  publisher={SIAM},
}

@article{foutel2025optimal,
title={Optimal vaccination policy to prevent endemicity: a stochastic model},
author={Foutel-Rodier, F{\'e}lix and Charpentier, Arthur and Gu{\'e}rin, H{\'e}l{\`e}ne},
journal={Journal of Mathematical Biology},
volume={90},
number={1},
pages={1--55},
year={2025},
publisher={Springer}
}

@article{guerin2025stochastic,
title={A stochastic epidemic model with memory of the last infection and waning immunity},
author={Gu{\'e}rin, H{\'e}l{\`e}ne and Zotsa-Ngoufack, Arsene Brice},
journal={arXiv preprint arXiv:2505.00601},
year={2025}
}

@incollection{feller2015integral,
	title={On the integral equation of renewal theory},
	author={Feller, William},
	booktitle={Selected Papers I},
	pages={567--591},
	year={2015},
	publisher={Springer},
}

\appendix
\section{Computation to recover the ODE in Example~\ref{Ex-Model-SIRS}}\label{se-set-example}
\subsection{Porportion of infected individuals $\bar{I}$}
From~\eqref{Pop-eq-I} and the fact that 
\[F_{I,0}^c(x)=F_I^c(x)=e^{-\sigma x}\]
we derive that, for $t\geq0,$
\begin{equation}\label{A-I}
\bar{I}(t)=\bar{I}(0)\,e^{-\sigma t}+\beta\int_0^t e^{-(t-s)\sigma}\bar{S}(s)\bar{I}(s)ds.
\end{equation}
Therefore by derivation, we obtain
\[\frac{d \bar{I}}{dt}(t) = \beta \bar{S}(t) \bar{I}(t) - \sigma \bar{I}(t) .\]
\subsection{Proportion of recovered individuals $\bar{R}$}
From~\eqref{Pop-eq-R}, and the fact that
\[G^c_0(t|x)=G^c(t|x)=e^{-\theta t},\] 
\begin{equation}\label{A-R}
\begin{aligned}
\bar{R}(t)
&=\bar{R}(0) e^{-\theta t}e^{-\nu((0,t])}+\bar{I}(0)\sigma\int_{0}^{t} e^{-\nu((x,t])}e^{-\theta(t-x)} e^{-\sigma x}dx\\
&\hspace{1cm}+\beta\sigma\int_{0}^{t}\int_0^{t-s} e^{-\nu((s+x,t])}e^{-\theta(t-s-x)}e^{-\sigma x}dx\bar{S}(s)\bar{I}(s)ds					
\end{aligned}
\end{equation}
For $t\in (\tau_n,\,\tau_{n+1}),$ we recall that $\partial_t\nu((v,t])=w(t).$ Taking the differential of \eqref{A-R}, it follows that,
\begin{equation*}
\begin{aligned}
&\frac{d\bar{R}}{dt}(t)=-(\theta+w(t))\bar{R}(t)+\bar{I}(0)\sigma \,e^{-\sigma t}
+\beta\sigma\int_0^t e^{-(t-s)\sigma}\bar{S}(s)\bar{I}(s)ds.						
\end{aligned}
\end{equation*}

As a result, from~\eqref{A-I}, it follows that,
\begin{equation*}
\partial_t\bar{R}(t)=-(\theta+w(t))\bar{R}(t)+\sigma\bar{I}(t).						
\end{equation*}
Noting that, for any $v < \tau_n,\,\nu((v, \tau_n^+])=\alpha_n+\nu((v, \tau_n^-]),$ from~\eqref{A-R}, we derive that, 
\begin{equation*}
\bar{R}(\tau_n^+) =  e^{-\alpha_n}\bar{R}(\tau_n^-).
\end{equation*}
\subsection{Proportion of vaccinated individuals $V$}
From~\eqref{Pop-eq-V} and the fact that, 
\[F_{V,}^c(t)=e^{-\kappa t},\]
we derive that,
\begin{equation}\label{A-V}
V(t)=\int_0^t\big(1-\bar{I}(v)\big)e^{-\nu((v,t])}e^{-(t-v)\kappa}\nu(dv),
\end{equation}
and by derivation, for $t\in (\tau_n,\,\tau_{n+1}),$ as $\partial_t\nu((v,t])=w(t),$ we obtain that,
\begin{equation*}
\frac{dV}{dt}(t)=\big(1-\bar{I}(t)\big)w(t)-(\kappa+w(t)) V(t)
\end{equation*}
Moreover from~\eqref{A-V} we also deduce that,
\begin{equation*}
V(\tau_n^+) =  e^{-\alpha_n}V(\tau_n^-).
\end{equation*}
\subsection{Proportion of susceptible individuals}
From~\eqref{Pop-eq-S} 
\begin{equation}
\begin{aligned}
&\bar{S}(t)=
\bar{S}(0)e^{-\nu((0,t])}\exp\left(-\beta\int_{0}^{t}\bar{I}(r)dr\right)+\bar{R}(0)\int_{0}^{t}\exp\left(-\beta\int_{y}^{t}\bar{I}(r)dr\right)G^c_0(dy|0)\,e^{-\nu((0,t])}\\
&\hspace{2cm}+\bar{I}(0)\int_{0}^{t}\int_0^{t-x} e^{-\nu((x,t])}\exp\left(-\beta\int_{x+u}^{t}\bar{I}(r)dr\right)G_0^c(du|x)F^c_{I,0}(dx)\\
&\hspace{1cm}+\beta\int_{0}^{t}\int_0^{t-s}\int_{0}^{t-s-x} e^{-\nu((s+x,t])}\exp\left(-\beta\int_{s+x+y}^{t}\bar{I}(r)dr\right)G^c(dy|x)\bar{S}(s)\bar{I}(s)F^c_I(dx)ds\\
&\hspace{1cm}+\int_0^t\int_{0}^{t-v}\big(1-\bar{I}(v)\big)e^{-\nu((v,t])}\exp\left(-\beta\int_{v+x}^{t}\bar{I}(r)dr\right) F^c_V(dx)\nu(dv)\\
&:=B_1(t)+B_2(t)+B_3(t)+B_4(t).							
\end{aligned}
\end{equation}
We recall that,
\[F_{I,0}^c(x)=F_I^c(x)=e^{-\sigma x},\quad G^c_0(t|x)=G^c(t|x)=e^{-\theta t},\quad F_{V,}^c(t)=e^{-\kappa t},\]
and for $t\in (\tau_n,\,\tau_{n+1}[,$ $\partial_t\nu((v,t])=w(t).$ 

Consequently, for $t\in (\tau_n,\,\tau_{n+1}[,$ we obtain
\begin{equation*}
\partial_t B_1(t)=-(w(t)+\beta\bar{I}(t))B_1(t),
\end{equation*}
\begin{align*}
\partial_t B_2(t)&=-(w(t)+\beta\bar{I}(t))B_2(t)\non\\
&\quad+\theta\Big(\bar{R}(0)G_0^c(t|0)e^{-\nu((0,t])}+\int_{0}^{t}e^{-\nu((x,t])}G^c_0(t-x|x)F_{I,0}^c(dx)\Big),
\end{align*}
\begin{align*}
\partial_t	B_3(t)&=\beta \theta\int_{0}^{t}\int_{0}^{t-s}e^{-\nu((s+x,t])}G^c(t-s-x|x)F^c_I(dx)\bar{S}(s)\bar{I}(s)ds-(w(t)+\beta\bar{I}(t))B_3(t),
\end{align*}
and 
\begin{align*}
\partial_t B_4(t)=\kappa\int_{0}^{t}\big(1-\bar{I}(v)\big) e^{-\nu((v,t])}F^c_V(t-v)\nu(dv)-(w(t)+\beta\bar{I}(t))B_4(t).
\end{align*}
As a result, it follows that, for $t\in (\tau_n,\,\tau_{n+1}[,$
\begin{align*}
\frac{d\bar{S}}{dt}(t)&=-(w(t)+\beta\bar{I}(t))\bar{S}(t)\\ 
&\quad+ \theta \Bigg( \bar{R}(0) G^c_0(t|0) e^{-\nu((0,t])} + \int_0^t e^{-\nu((x,t])}G^c_0(t-x|x)  F_{I,0}^c(dx)\\&\hspace{3cm}+\int_{0}^{t}\int_{0}^{t-s}e^{-\nu((s+x,t])}G^c(t-s-x|x)F^c_I(dx)\bar{S}(s)\bar{I}(s)ds \Bigg)\\&+\kappa\int_{0}^{t}\big(1-\bar{I}(v)\big) e^{-\nu((v,t])}F^c_V(t-v)\nu(dv)\\
&=-(w(t)+\beta\bar{I}(t))\bar{S}(t)+ \theta \bar{R}(t) +\kappa V(t),
\end{align*}
where the last line follows from the expressions of $\bar{I},\,\bar{R}$ and $V.$

Moreover, we also note that,
\begin{equation*}
\bar{S}(\tau_n^+) =  e^{-\alpha_n}\bar{S}(\tau_n^-).
\end{equation*}
On the other hand, as for $t\geq0$, $\bar{S}(t)+\bar{I}(t)+\bar{R}(t)+V(t)=1$, we derive that,
\begin{align*}
V(\tau_n^+) &= 1 - \bar{I}(\tau_n) - \bar{S}(\tau_n^+) - \bar{R}(\tau_n^+) \\
&= 1 - \bar{I}(\tau_n) - (\bar{S}(\tau_n^-) + \bar{R}(\tau_n^-))e^{-\alpha_n}.
\end{align*}
Substituting $1 - \bar{I}(\tau_n) = \bar{S}(\tau_n^-) + \bar{R}(\tau_n^-) + V(\tau_n^-)$, we recover:
\begin{equation*}
V(\tau_n^+) = V(\tau_n^-) + (\bar{S}(\tau_n^-) + \bar{R}(\tau_n^-))(1 - e^{-\alpha_n}).
\end{equation*}
This concludes the proof.
\end{document}